\providecommand{\tabularnewline}{\\}
\numberwithin{equation}{section} 
\numberwithin{figure}{section} 
\theoremstyle{plain}
\theoremstyle{plain}
\newtheorem{thm}{Théorème}
  \theoremstyle{definition}
  \newtheorem{defn}[thm]{Définition}
 \theoremstyle{definition}
  \newtheorem{example}[thm]{Exemple}
  \theoremstyle{remark}
  \newtheorem{rem}[thm]{Remarque}
  \theoremstyle{plain}
  \newtheorem{lem}[thm]{Lemme}
  \theoremstyle{remark}
  \newtheorem{notation}[thm]{Notation}
  \theoremstyle{plain}
  \newtheorem{cor}[thm]{Corollaire}
  \theoremstyle{plain}
  \newtheorem{prop}[thm]{Proposition}
\begin{document}

\title{Une vue panoramique sur l'analyse semi-classique}

\author{Olivier Lablée}

\maketitle
\tableofcontents{}

\section{Introduction}

Dans ce papier, on présente un rapide et partiel survol de l'analyse
semi-classique. Cette théorie mathématique est un carrefour entre
la géométrie et la théorie spectrale. Comme ce chapitre ne contient
que des résultats standards, on ne trouvera que quelques démonstrations,
mais on donnera des références précises. Ce chapitre ne prétend à
aucune originalité dans les résultats.

Pour plus de détails sur la géométrie symplectiques voir par exemple
les premiers chapitres des livres de M. Audin\textbf{ {[}8{]}} et
\textbf{{[}9{]}}, voir aussi les ouvrages de A.T. Fomenko \textbf{{[}62{]}},
\textbf{{[}63{]}}. On peut aussi consulter le très bon livre de D.
McDuff et D. Salomon \textbf{{[}96{]}}. Pour les généralités sur le
formalisme mathématique de la mécanique quantique on peut se reporter
à\textbf{ {[}76{]}}. Pour la partie théorie spectrale, voir par exemple
le livre de P. Lévy-Bruhl \textbf{{[}93{]}}, le livre de T. Kato \textbf{{[}85{]}},
ou encore la collection des Reed-Simon \textbf{{[}106{]}}. En ce qui
concerne la quantification voir par exemple le livre A. Catteneo,
B. Keller, C. Torrosian et A. Bruguières \textbf{{[}30{]}}. Enfin,
pour l'analyse semi-classique voir les références classiques : le
livre de D. Robert \textbf{{[}107{]}}, celui de M. Dimassi et J. Sjöstrand
\textbf{{[}52{]}}, le livre de A. Martinez \textbf{{[}94{]}} et le
polycopié de Y. Colin de Verdière \textbf{{[}107{]}}. Pour la partie
analyse microlocale, voir {[}\textbf{118}{]}, \textbf{{[}119{]}},
\textbf{{[}107{]}}, \textbf{{[}73{]}}. En ce qui concerne les systèmes
intégrables symplectique on pourra voir \textbf{{[}8{]}}, \textbf{{[}9{]}},
\textbf{{[}99{]}} et \textbf{{[}119{]}}. Le récent livre de S. Vu
Ngoc \textbf{{[}119{]}} propose un grand panorama très complet sur
les systèmes intégrables symplectique et semi-classique.

\section{La géométrie symplectique}

A la différence de la géométrie riemannienne, la géométrie symplectique
est une géométrie de mesure de surface, dédiée à la base pour la formulation
de la mécanique de Hamilton, elle joue aussi un rôle très important
à l'intérieur même des mathématiques, notamment en topologie. Pour
commencer on définira la notion de variété symplectique, on donnera
ensuite des exemples simples, comme par exemple le fait que pour n'importe
quelle variété différentiable $M$, on peut munir son fibré cotangent
$T^{*}M$ d'une structure symplectique. On verra ensuite les principales
caractéristiques de la géométrie symplectique: comme la structure
de Lie induite sur l'algèbre des fonctions $\mathcal{C}^{\infty}(M)$.
Puis on finira sur l'absence de géométrie locale, ce qui constitue
encore une différence majeure avec le cas riemannien.

\subsection{La mécanique de Hamilton}

La mécanique de Hamilton par rapport à la formulation de Lagrange
n'apporte rien de nouveau sur le contenu physique, mais elle offre
un cadre géométrique puissant, elle apporte une nouvelle façon de
voir la physique: une façon moderne et géométrique. Une des principales
caractéristique de la physique moderne, c'est la géométrie (relativité
générale, cordes...). La géométrie riemannienne est une généralisation
de l'ancienne géométrie euclidienne, elle est liée à la théorie de
la relativité générale et à la théorie des jauges. Il existe une autre
géométrie, encore plus liée à la physique:\textbf{ }la géométrie\textbf{
}symplectique. Moins connue que sa cousine riemannienne, elle est
pourtant très riche, elle formalise parfaitement la mécanique de Hamilton.
Dans la théorie d'Hamilton, les particules physiques sont décrites
par leurs positions et leurs vitesses; par exemple dans l'espace euclidien
$\mathbb{R}^{3}$ un point est caractérisé par un vecteur $\left(x_{1},x_{2},x_{3},\xi_{1},\xi_{2},\xi_{3}\right)\in\mathbb{R}^{6}.$
Pour un hamitonien $f\in\mathcal{C}^{\infty}(\mathbb{R}^{6},\mathbb{R})$,
la dynamique est alors donnée par les équations de Hamilton :\[
\left\{ \begin{array}{cc}
\dot{\xi_{j}}=-\frac{\partial f}{\partial x_{j}}\\
\\\dot{x_{j}}=\frac{\partial f}{\partial\xi_{j}}\mathbf{.}\end{array}\right.\]

\subsection{Les variétés symplectiques}
\begin{defn}
Une variété symplectique $\left(M,\omega\right)$ est une variété
différentiable de dimension $m$ muni d'une $2$-forme $\omega$ fermée
et non-dégénérée.
\end{defn}
Rappelons que non-dégénérée signifie que pour tout point $x$ de $M$,
la forme bilinéaire $\omega(x)$ est non-dégénérée sur l'espace vectoriel
$T_{x}^{*}M$. Ainsi comme pour tout point $x$ de $M$, la forme
bilinéaire $\omega$ est à la fois non-dégénérée et alternée, la dimension
de l'espace $T_{x}^{*}M$ doit être nécessairement pair; ainsi la
dimension de la variété $M$ est elle aussi paire : $m=2n$. 

Une variété symplectique $\left(M,\omega\right)$ est naturellement
munie d'une forme volume $\tau=\frac{1}{n!}\omega^{n}$, où $\omega^{n}=\omega\wedge\omega\wedge...\wedge\omega$,
on a donc à disposition une orientation et une mesure de Lebesgue
sur la variété $M$. 

Donnons quelques exemples standards de variétés symplectiques:
\begin{example}
\textbf{L'espace $\mathbb{R}^{2n}$} : c'est l'exemple type, en notant
par $\left(x_{1},x_{2},...,x_{2n}\right)$ la base canonique de l'espace
vectoriel $\mathbb{R}^{2n}$, et en notant par $dx_{j}:=\left(x_{j}\right)^{*},\; j\in\left\{ 1,...,2n\right\} $
la base duale de $\mathbb{R}^{2n}$; alors la 2-forme\[
\omega_{0}={\displaystyle \sum_{i,j=1}^{n}dx_{i}\wedge dx_{j}}\]
 munie la variété $\mathbb{R}^{2n}$ d'une structure symplectique.
Si en particulier $n=1$, $\omega_{0}$ est un déterminant, c'est
alors une mesure d'aire algébrique sur le plan.\end{example}
\begin{rem}
Il est bon de noter que la géométrie symplectique ne donne que des
notions d'aire, il n'y a pas de notions de longueurs, encore moins
de notions d'angles, en effet, tout vecteur est orthogonal à lui même!
\end{rem}
Donnons un exemple sur une variété compacte :
\begin{example}
\textbf{La sphère $\mathbb{S}^{2}$:} sur $\mathbb{S}^{2}$ on définit
pour tout $x\in\mathbb{S}^{2}$: \[
\omega_{x}(\eta,\xi):=\left\langle x,\eta\wedge\xi\right\rangle _{\mathbb{R}^{3}}\]
avec $(\eta,\xi)\in\left(T_{x}\mathbb{S}^{2}\right)^{2}\subset\mathbb{R}^{3}$
ce qui munit la sphère d'une structure de variété symplectique.
\end{example}
Plus généralement on a :
\begin{example}
\textbf{Sur une  surface }: il suffit de prendre :\[
\omega=\sqrt{|g|}\, dq\wedge dp\]
où $|g|=det(g_{i,j})$ dans la carte de coordonnées locales $(p,q)$.
\end{example}
Finissons maintenant un exemple fondamental :
\begin{example}
\textbf{Sur le fibré cotangent d'une variété} : Le fibré cotangent
d'une variété différentiable est naturellement muni d'une structure
symplectique. En effet, pour toute variété $M$ lisse de dimension
$n$, on peut munir de façon intrinsèque son fibré cotangent $T^{*}M$
d'une structure de variété symplectique $\left(T^{*}M,\omega\right)$
de dimension $2n$ définie par la différentielle extérieure\[
\omega=d\alpha\]
de la 1-forme de Liouville $\alpha$.
\end{example}

\subsection{Premiers résultats et applications}

Rappelons d'abord une notion importante: celle de symplectomorphisme. 
\begin{defn}
Soient $\left(M_{1},\omega_{1}\right)$ et $\left(M_{2},\omega_{2}\right)$
deux variétés symplectique de même dimension, un symplectomorphisme
de $M_{1}$ sur $M_{2}$ est un difféomorphisme $f:\; M_{1}\rightarrow M_{2}$
tels que :\[
f^{*}\omega_{2}=\omega_{1}.\]

\end{defn}
Sur les variétés symplectique, le premier résultat géométrique majeur
est le théorème de Darboux qui donne la {}``géométrie'' locale des
ces variétés. 
\begin{thm}
\textbf{(Darboux).} Toute variété symplectique $\left(M,\omega\right)$
de dimension $2n$ est localement symplectomorphe à $\left(\mathbb{R}^{2n},\omega_{0}\right)$.
\end{thm}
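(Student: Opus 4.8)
The plan is to prove this by the so-called Moser trick (the homotopy method), which reduces the global-looking statement to an ODE. First I would work in a fixed coordinate chart around a point $x_{0}\in M$, so that we may assume $M$ is an open neighbourhood of $0$ in $\mathbb{R}^{2n}$ and $\omega$ is a closed, nondegenerate $2$-form with $\omega(0)$ some constant nondegenerate alternating bilinear form. A preliminary linear-algebra step handles the pointwise model: any nondegenerate alternating form on $\mathbb{R}^{2n}$ can be brought to the standard form $\omega_{0}(0)=\sum dx_{i}\wedge dx_{n+i}$ by a linear isomorphism (the symplectic Gram--Schmidt process). Applying this linear map, we may assume in addition that $\omega(0)=\omega_{0}(0)$ at the origin.

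Next I would set up the interpolating family $\omega_{t}:=\omega_{0}+t(\omega-\omega_{0})$ for $t\in[0,1]$. Each $\omega_{t}$ is closed, and $\omega_{t}(0)=\omega_{0}(0)$ is nondegenerate, so by openness of nondegeneracy each $\omega_{t}$ is nondegenerate on a possibly smaller neighbourhood of $0$, uniformly in $t\in[0,1]$. Since $\omega-\omega_{0}$ is closed and we are on a contractible neighbourhood, the Poincar\'e lemma gives a $1$-form $\beta$ with $d\beta=\omega-\omega_{0}$, and by subtracting a constant we may arrange $\beta(0)=0$. Now define the time-dependent vector field $X_{t}$ by the equation $\iota_{X_{t}}\omega_{t}=-\beta$, which has a unique smooth solution because $\omega_{t}$ is nondegenerate; note $X_{t}(0)=0$. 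The flow $\varphi_{t}$ of $X_{t}$ therefore exists on a neighbourhood of $0$ for all $t\in[0,1]$ and fixes the origin.

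The key computation is then Cartan's magic formula applied to $\frac{d}{dt}\bigl(\varphi_{t}^{*}\omega_{t}\bigr)$: one gets
\[
\frac{d}{dt}\bigl(\varphi_{t}^{*}\omega_{t}\bigr)=\varphi_{t}^{*}\Bigl(\mathcal{L}_{X_{t}}\omega_{t}+\frac{d\omega_{t}}{dt}\Bigr)=\varphi_{t}^{*}\Bigl(d\,\iota_{X_{t}}\omega_{t}+\iota_{X_{t}}d\omega_{t}+(\omega-\omega_{0})\Bigr)=\varphi_{t}^{*}\bigl(-d\beta+0+d\beta\bigr)=0,
\]
using that $\omega_{t}$ is closed. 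Hence $\varphi_{1}^{*}\omega=\varphi_{1}^{*}\omega_{1}=\varphi_{0}^{*}\omega_{0}=\omega_{0}$, so $\varphi_{1}$ (composed with the initial linear change of coordinates) is the desired symplectomorphism onto a neighbourhood of $0$ in $(\mathbb{R}^{2n},\omega_{0})$.

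The main obstacle to watch is not any single deep idea but the bookkeeping of domains: one must check that the neighbourhood can be shrunk once and for all so that $\omega_{t}$ stays nondegenerate for every $t\in[0,1]$ simultaneously and so that the flow $\varphi_{t}$ is defined up to time $1$ on a common neighbourhood of $0$; the fixed point $X_{t}(0)=0$ is exactly what makes this possible via continuous dependence of flows on initial conditions. A secondary point requiring a line of justification is the normalization $\beta(0)=0$, which is what guarantees $X_{t}(0)=0$ and hence that the constructed map is a local diffeomorphism fixing the base point.
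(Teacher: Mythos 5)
Your proposal is correct: it is the standard Moser deformation (homotopy) argument, and every step checks out -- the linear normalisation $\omega(x_{0})=\omega_{0}(0)$, the family $\omega_{t}=\omega_{0}+t(\omega-\omega_{0})$, the primitive $\beta$ with $\beta(0)=0$, the vector field $\iota_{X_{t}}\omega_{t}=-\beta$ with $X_{t}(0)=0$, and the computation $\frac{d}{dt}\left(\varphi_{t}^{*}\omega_{t}\right)=\varphi_{t}^{*}\left(d\,\iota_{X_{t}}\omega_{t}+(\omega-\omega_{0})\right)=0$. Note that the paper itself gives no proof of this theorem: it is a survey which states Darboux's theorem and sends the reader to the standard texts (Audin, McDuff--Salomon), where the proof is exactly the Moser trick you describe (the classical alternative, a direct induction on $n$ producing Darboux coordinates one conjugate pair at a time via the flow of a chosen Hamiltonian, is also found in the literature but is not what the paper relies on). Two small points you flag deserve the one-line justifications you indicate: subtracting the constant-coefficient $1$-form $\beta(0)$ is harmless because a constant $1$-form is closed, so $d\beta$ is unchanged; and the uniform nondegeneracy of $\omega_{t}$ on a fixed neighbourhood follows from continuity of $(x,t)\mapsto\omega_{t}(x)$ together with compactness of $[0,1]$, after which $X_{t}(0)=0$ guarantees the flow is defined up to time $1$ near the origin. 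Finally, be aware that the displayed formula for $\omega_{0}$ in the paper, $\sum_{i,j=1}^{n}dx_{i}\wedge dx_{j}$, is a typographical slip; your convention $\sum_{i=1}^{n}dx_{i}\wedge dx_{n+i}$ is the intended standard form.
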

Ce qui signifie que pour tout point $x_{0}\in M$, il existe un ouvert
$U$ de $M$ contenant $x_{0}$ et il existe un système $\left(x_{1},...,x_{n},\zeta_{1},...,\zeta_{n}\right)$
de coordonnées locales%
\footnote{dites coordonnées canonique, ou encore coordonnées de Darboux.%
} tels que sur l'ouvert $U$ on ait l'expression : \[
\omega={\displaystyle \sum_{i,j=1}^{n}d\zeta_{i}\wedge dx_{j}.}\]
Le théorème de Darboux établit une différence majeure entre les géométries
riemannienne et symplectique, en effet: dans le premier cas, il y
a un invariant local: la courbure, alors que dans le second cas tout
est localement isomorphe à $\left(\mathbb{R}^{2n},\omega_{0}\right)$:

\subsection{Flot hamiltonien}

Comme la 2-forme $\omega$ est non-dégénérée, pour tout point $x_{0}$
de $M$, on peut avec la 2-forme $\omega$, identifier les deux espaces
vectoriels $T_{x_{0}}^{*}M$ et $T_{x_{0}}M$. Ainsi pour $f\in\mathcal{C}^{\infty}(M)$
par dualité il existe un unique vecteur $\chi_{f}\left(x_{0}\right)\in T_{x_{0}}M$
tel que pour tout $v_{x_{0}}\in T_{x_{0}}M$ on ait :\[
\omega(x_{0})\left(\chi_{f}(x_{0}),v_{x_{0}}\right)=-df(x_{0}).v_{x_{0}}.\]
Ensuite, pris fibre par fibre, nous avons l'existence et l'unicité
d'un champs de vecteur $\chi_{f}\in\Gamma(M)$ tel que pour tout champs
de vecteur $v\in\Gamma(M)$ on ait :\[
\omega\left(\chi_{f},v\right)=-df.v\]
c'est-à-dire : 

\[
i_{\chi_{f}}(\omega)=-df.\]
En coordonnés locales de Darboux on a alors l'écriture :\[
\chi_{f}={\displaystyle \sum_{j=1}^{n}\frac{\partial f}{\partial\xi_{j}}\left(\frac{\partial}{\partial x_{j}}\right)-\frac{\partial f}{\partial x_{j}}\left(\frac{\partial}{\partial\xi_{j}}\right).}\]
On note alors par $\varphi_{t}^{f}$ le flot associé au champs de
vecteur $\chi_{f}$ : $\varphi_{t}^{f}:\, m=m\mapsto\varphi_{t}^{f}(m)$.
Ce flot est donné comme étant la trajectoire associé au champs de
vecteur $\chi_{f}$ passant par $m$. C'est à dire que:\[
\left\{ \begin{array}{cc}
\frac{d}{dt}\left(\varphi_{t}^{f}(m)\right)=\chi_{f}\left(\varphi_{t}^{f}(m)\right)\\
\\\varphi_{0}^{f}(m)=m\end{array}\right.\]
où $\frac{d}{dt}\left(\varphi_{t}^{f}(m)\right)\in T_{\varphi_{t}^{f}(m)}M.$ 

En coordonnés de Darboux on a l'expression familière des équations
de Hamilton:\[
\left\{ \begin{array}{cc}
\dot{\xi_{j}}=-\frac{\partial f}{\partial x_{j}}\\
\\\dot{x_{j}}=\frac{\partial f}{\partial\xi_{j}}\mathbf{.}\end{array}\right.\]
Sur une variété riemannienne $\left(X,g\right)$ le flot géodésique
est donné par :\[
G_{t}\left(x,v\right)=(\gamma(t),\dot{\gamma}(t))\]
où $\gamma$ est la géodésique de $X$ telle que $\gamma(0)=x\in X$
et $\dot{\gamma}(t)=T_{x}X$. Le groupe $G_{t}$ à un paramètre est
un groupe de difféomorphismes de $T^{*}X$ (que l'on a identifié à
$TX$ via la métrique $g$) qui conserve le fibré unitaire $UX$.
Ce flot est exactement le flot hamiltonien sur $T^{*}X$ associé à
la fonction\[
f(x,\xi)=\frac{1}{2}\sum_{i,j}g^{i,j}(x)\xi_{i}\xi_{j}.\]

\begin{example}
\textbf{Exemple de calcul de flot:} L'oscillateur harmonique en dimension
2: si on considère la variété différentiable canonique $M=\mathbb{R}^{2}$,
son fibré cotangent $N=T^{*}M=\mathbb{R}^{4}$ a une structure de
variété symplectique. Notons par $H$ l'oscillateur harmonique en
dimension 2 :\[
H(x_{1},x_{2},\xi_{1},\xi_{2})=H_{1}(x_{1},x_{2},\xi_{1},\xi_{2})+H_{2}(x_{1},x_{2},\xi_{1},\xi_{2})=\alpha_{1}\frac{x_{1}^{2}+\xi_{1}^{2}}{2}+\alpha_{2}\frac{x_{2}^{2}+\xi_{2}^{2}}{2}\]
avec $\alpha_{1},\alpha_{2}$ deux réels strictements positifs. 

On va calculer le flot hamiltonien de point initial : \[
m_{0}=\left(\begin{array}{c}
x_{1,0}\\
x_{2,0}\\
\xi_{1,0}\\
\xi_{2,0}\end{array}\right)\in H_{1}^{-1}(E_{1})\cap H_{2}^{-1}(E_{2})\]
où $E_{1}>0$ et $E_{2}>0$.

Les équations de Hamilton sont alors :\[
\left(\begin{array}{c}
\dot{x_{1}}(t)\\
\dot{x_{2}}(t)\\
\dot{\xi_{1}}(t)\\
\dot{\xi_{2}}(t)\end{array}\right)=\left(\begin{array}{c}
\alpha_{1}\xi_{1}(t)\\
\alpha_{2}\xi_{2}(t)\\
-\alpha_{1}x_{1}(t)\\
-\alpha_{2}x_{2}(t)\end{array}\right).\]
En posant pour $j\in\left\{ 1,2\right\} $, $Z_{j}(t):=x_{j}(t)+i\xi_{j}(t)\in\mathbb{C}$,
on a immédiatement que $\dot{Z_{j}(t)}=-i\alpha_{j}Z_{j}(t)$ et donc
en intégrant cette équation différentielle linéaire d'ordre 1 on obtient:\[
Z_{j}(t)=Z_{j}(0)e^{-i\alpha_{j}t}\]
et\[
|Z_{j}(0)|^{2}=x_{j}^{2}(0)+\xi_{j}^{2}(0)=\frac{2E_{j}}{\alpha_{j}}\]
donc le flot hamiltonien est tracé sur le tore :\[
\mathbb{T}^{2}=\sqrt{\frac{2E_{1}}{\alpha_{1}}}\mathbb{S}^{1}\times\sqrt{\frac{2E_{2}}{\alpha_{2}}}\mathbb{S}^{1}.\]
En coordonnées angulaire le flot s'écrit alors linéairement:

\[
\varphi_{t}:\,\left(\begin{array}{c}
\theta_{1,0}\\
\theta_{2,0}\end{array}\right)\mapsto\left(\begin{array}{c}
t\alpha_{1}+\theta_{1,0}\\
t\alpha_{2}+\theta_{2,0}\end{array}\right).\]
\end{example}
\begin{rem}
Revenons maintenant a un fait important, qui justifie que la 2-forme
doit être fermée: la forme $\omega$ (ainsi que la forme volume associée)
est conservée par le flot hamiltonien :\[
\forall t\geq0\;\left(\varphi_{t}^{f}\right)^{*}\omega=\omega\]
pour le voir il suffit d'écrire la définition de la dérivée de Lie:
$\mathcal{L}_{\chi_{f}}(\omega)=\frac{d}{dt}\left(\varphi_{t}^{f}\right)^{*}$et
d'utiliser la formule de Cartan pour voir que $\mathcal{L}_{\chi_{f}}(\omega)=0$.
\end{rem}

\subsection{Crochet de Poisson}

A partir du champs $\chi$ on peut munir $\mathcal{C}^{\infty}(M)$
d'une structure d'algèbre de Lie : avoir une application à valeurs
dans $\mathcal{C}^{\infty}(M)$ , bilinéaire, alternée et vérifiant
l'identité de Jacobi. Pour toutes fonctions $(f,g)\in\left(\mathcal{C}^{\infty}(M)\right)^{2}$
on définit le crochet de Poisson:\[
\left\{ .,.\right\} :\left\{ \begin{array}{cc}
\mathcal{C}^{\infty}(M)\times\mathcal{C}^{\infty}(M)\rightarrow\mathcal{C}^{\infty}(M)\\
\\(f,g)\mapsto\left\{ f,g\right\} :=\omega(\chi_{f},\chi_{g}).\end{array}\right.\]
En fait l'application:\[
\chi:\;\mathcal{C}^{\infty}(M),\,\left\{ .,.\right\} \rightarrow\Gamma(M),\,\left[.,.\right]\]
est un homéomorphisme d'algèbre de Lie. En coordonnées de Darboux,
le crochet de Poisson de deux fonctions $f$ et $g$ s'écrit simplement
:\[
\left\{ f,g\right\} ={\displaystyle \sum_{j=1}^{n}\frac{\partial f}{\partial\xi_{j}}\frac{\partial g}{\partial x_{j}}-\frac{\partial f}{\partial x_{j}}\frac{\partial g}{\partial\xi_{j}}.}\]
Soient $f$ et $g$ deux fonctions de $\mathcal{C}^{\infty}(M)$,
et notons simplement par $\varphi_{t}$ le flot hamiltonien associé
à $f$, alors pour tout point $m\in M$ nous avons que : \[
\frac{d}{dt}\left(g\circ\varphi_{t}(m)\right)=\left\{ g,f\right\} \circ\varphi_{t}(m).\]
Ainsi la fonction $g$ est constante le long des trajectoires du flot
hamiltonien associé à $f$, si et seulement si $\left\{ g,f\right\} =0.$

\subsection{Un peu de topologie symplectique }

Finissons cette partie sur un résultat étonnant. Comme on la vue,
la géométrie symplectique est isochore : elle conserve le volume,
de plus par l'absence de courbure, elle est moins rigide que la géométrie
riemannienne, et on peut espérer que dans une telle géométrie on peut
{}``faire passer un chameau par le chat d'une aiguille'', plus précisément
on peut se demander si on peut plonger de manière symplectique une
boule dans un cylindre de rayon plus petit. Cette question est restée
ouverte jusqu'en 1985 où M. Gromov à répondu par la négation \textbf{{[}74{]}}.

En notant $B^{2n}(r):=B_{\mathbb{R}^{2n}}(0,r)$ et $Z^{2n}(R):=B_{\mathbb{R}^{2n}}(0,r)\times\mathbb{R}^{2n-2}$
on a le :
\begin{thm}
\textbf{(Gromov).} Si il existe un plongement symplectique (un plongement
qui conserve $\omega_{0}$) de $B^{2n}(r)$ dans $Z^{2n}(R)$, alors
$r\leq R$.
\end{thm}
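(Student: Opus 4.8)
Le plan est de reprendre la d\'emonstration originale de Gromov, fond\'ee sur les courbes pseudo-holomorphes. Supposons donn\'e un plongement symplectique $\varphi:B^{2n}(r)\hookrightarrow Z^{2n}(R)=B^{2}(R)\times\mathbb{R}^{2n-2}$ et montrons que $r\leq R$. La premi\`ere \'etape est de se ramener \`a une vari\'et\'e ferm\'ee : pour $\varepsilon>0$ fix\'e on remplace le facteur $B^{2}(R)$ par une sph\`ere $S^{2}$ munie d'une forme d'aire totale $\pi R^{2}+\varepsilon$, et le facteur $\mathbb{R}^{2n-2}$ par un tore plat $\mathbb{T}^{2n-2}$ assez gros muni de sa forme symplectique standard. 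Comme $\varphi\big(B^{2n}(r)\big)$ est relativement compacte dans $Z^{2n}(R)$, on en tire un plongement symplectique $\psi:B^{2n}(r)\hookrightarrow(M,\omega)$, o\`u $M=S^{2}\times\mathbb{T}^{2n-2}$ est une vari\'et\'e symplectique ferm\'ee, avec $\omega(A)=\pi R^{2}+\varepsilon$ pour la classe d'homologie $A:=[S^{2}\times\{\mathrm{pt}\}]\in H_{2}(M;\mathbb{Z})$.

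La deuxi\`eme \'etape est le choix d'une structure presque complexe $J$ sur $M$, compatible avec $\omega$, et telle que $\psi^{*}J$ soit la structure complexe standard $J_{0}$ de $\mathbb{R}^{2n}\simeq\mathbb{C}^{n}$ au voisinage d'une boule l\'eg\`erement plus petite que $B^{2n}(r)$. Le point crucial --- et c'est l\`a toute la difficult\'e --- est l'\'enonc\'e d'existence suivant : pour \emph{toute} structure presque complexe $J$ compatible avec $\omega$, il existe une courbe $J$-holomorphe $u:S^{2}\to M$, \'eventuellement nodale, repr\'esentant la classe $A$ et passant par le point $\psi(0)$. Pour la structure produit c'est \'evident : la sph\`ere holomorphe $S^{2}\times\{q\}$ passant par $\psi(0)$ repr\'esente $A$ et convient ; on propage alors la propri\'et\'e \`a $J$ quelconque en reliant $J$ \`a la structure produit par un chemin, la classe $A$ \'etant d'\'energie et d'auto-intersection minimales, de sorte que le th\'eor\`eme de compacit\'e de Gromov force une suite de courbes de classe $A$ \`a ne d\'eg\'en\'erer qu'en une configuration nodale dont une composante reste dans la classe $A$ et passe par $\psi(0)$, la positivit\'e des intersections excluant les autres d\'eg\'en\'erescences. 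Cette \'etape mobilise toute la th\'eorie de Fredholm de l'op\'erateur de Cauchy--Riemann (transversalit\'e pour $J$ g\'en\'erique), les estim\'ees elliptiques et la compacit\'e de Gromov.

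La troisi\`eme \'etape, \'el\'ementaire une fois la courbe en main, est une double estimation d'aire. D'une part l'aire symplectique totale de $u$ vaut $\int_{S^{2}}u^{*}\omega=\omega(A)=\pi R^{2}+\varepsilon$, et comme $J$ est compatible avec $\omega$ toute composante non constante a une aire symplectique strictement positive ; la composante non constante $u_{0}$ passant par $\psi(0)$ v\'erifie donc $\int_{S^{2}}u_{0}^{*}\omega\leq\pi R^{2}+\varepsilon$. D'autre part, soit $V$ la partie de cette composante qui retombe dans la boule, vue dans $\mathbb{C}^{n}$ via $\psi^{-1}$ : c'est une courbe $J_{0}$-holomorphe propre dans $B^{2n}(r)$ passant par l'origine (propre car le principe du maximum interdit \`a une courbe holomorphe non constante de rester dans une boule). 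Comme pour $J_{0}$ l'aire riemannienne co\"{\i}ncide avec l'aire symplectique (in\'egalit\'e de Wirtinger), la formule de monotonie des surfaces minimales --- ou le th\'eor\`eme de Lelong --- donne $\mathrm{Aire}(V)\geq\pi r^{2}$. En combinant :
\[
\pi r^{2}\leq\mathrm{Aire}(V)=\int_{V}\omega_{0}\leq\int_{S^{2}}u_{0}^{*}\omega\leq\pi R^{2}+\varepsilon,
\]
et comme $\varepsilon>0$ est arbitraire, on conclut $r\leq R$.

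On peut aussi signaler une voie alternative, qui \'evite les courbes pseudo-holomorphes au profit de m\'ethodes variationnelles mais d'une difficult\'e comparable : construire une capacit\'e symplectique $c$ --- la capacit\'e d'Ekeland--Hofer, ou la capacit\'e de Hofer--Zehnder obtenue en \'etudiant les points critiques d'une fonctionnelle d'action --- monotone sous plongement symplectique et normalis\'ee par $c\big(B^{2n}(1)\big)=c\big(Z^{2n}(1)\big)=\pi$. Le th\'eor\`eme en d\'ecoule alors imm\'ediatement : $\pi r^{2}=c\big(B^{2n}(r)\big)\leq c\big(Z^{2n}(R)\big)=\pi R^{2}$.
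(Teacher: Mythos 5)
Le texte ne d\'emontre pas ce th\'eor\`eme : il l'\'enonce et renvoie \`a l'article original de Gromov \textbf{[74]}, si bien qu'il n'y a pas de preuve interne \`a laquelle comparer la v\^otre. Votre sch\'ema est bien celui de la d\'emonstration standard par courbes pseudo-holomorphes, et il est correct dans ses grandes lignes : compactification de $Z^{2n}(R)$ en $S^{2}\times\mathbb{T}^{2n-2}$ avec aire $\pi R^{2}+\varepsilon$ sur le facteur sph\'erique, choix d'un $J$ compatible co\"{\i}ncidant avec $J_{0}$ sur l'image de la boule, existence d'une sph\`ere $J$-holomorphe dans la classe $A=[S^{2}\times\{\mathrm{pt}\}]$ passant par $\psi(0)$, puis in\'egalit\'e de Lelong (monotonie) appliqu\'ee \`a la partie de la courbe contenue dans la boule, et $\varepsilon\to0$. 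Tout le poids repose, comme vous le dites, sur les deux bo\^{\i}tes noires que sont la compacit\'e de Gromov (avec la th\'eorie de Fredholm pour un $J$ g\'en\'erique, puis passage \`a la limite vers le $J$ choisi, qui n'a aucune raison d'\^etre g\'en\'erique) et la formule de monotonie ; les admettre est l\'egitime ici, puisque le papier lui-m\^eme ne d\'emontre rien. Deux retouches : d'une part, la \og positivit\'e des intersections \fg{} est un outil propre \`a la dimension $4$ et n'est pas le bon argument en dimension sup\'erieure ; ce qui exclut r\'eellement les d\'eg\'en\'erescences nodales est que $\mathbb{T}^{2n-2}$ est asph\'erique, donc toute classe sph\'erique de $M$ est un multiple entier de $A$, et $A$ \'etant primitive et d'\'energie minimale, aucune d\'ecomposition $A=A_{1}+A_{2}$ en classes d'\'energie strictement positive n'est possible --- la courbe limite est donc lisse et il n'y a pas lieu de parler de composante \og restant \fg{} dans la classe $A$. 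D'autre part, la voie alternative par capacit\'es que vous \'evoquez n'est pas une \'echappatoire : la normalisation $c\bigl(Z^{2n}(1)\bigr)=\pi$ pour la capacit\'e d'Ekeland--Hofer ou de Hofer--Zehnder est pr\'ecis\'ement l'\'enonc\'e de non-tassement sous une autre forme, et sa preuve (variationnelle) est de difficult\'e comparable, comme vous le signalez honn\^etement.
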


\section{Mécanique quantique et théorie spectrale }

\subsection{La révolution de la physique }

La physique jusqu'à la fin du XIX siècle était constituée par deux
entitées: tout d'abord les corpuscules, c'est-à-dire les points matériels,
qui constituent la matière, la seconde entité est les ondes, qui constituent
les vibrations et les rayonnements. Le mouvement des corpuscules est
décrit par des trajectoires déterministes dans l'espace, on connaît
à tout instant la position et la vitesse d'une corpuscule. Les ondes
sont quand à elles non localisées, elle amené des phénomènes d'interférences.
Alors que les physiciens penser encore pouvoir tout décrire avec ces
deux entitées, certaines expériences, comme celle des fentes de Young,
de la photo de Wilson (1911) et même l'effet photo-électrique qui
datait de 1887 ne pouvait s'expliquer avec les deux entitées de base.
On peut alors situer la naissance de la mécanique quantique au moment
où est a débuté l'interrogation des physiciens au sujet de l'interprétation
des ces fameuses expériences. Au début de XX siècle, ce furent les
idées révolutionnaires de Bohr, Einstein, Heinsenberg, et Schrödinger
qui fournirent une théorie respectable. Construite pour expliquer
des phénomènes de rayonnement, cette théorie débouche sur beaucoup
d'autres thèmes de la physique. Un des grands succès de la théorie
quantique est que celle-ci s'attaque directement à la structure fondamentale
de la matière, en expliquant notamment les structures moléculaires,
atomiques et les propriétés des électrons. La mécanique quantique
est à la fois un bouleversement intellectuel, culturel et philosophique.
En effet, c'est une toute nouvelle façon de penser, opposée à l'intuition
immédiate, et qui est nécessaire pour comprendre le monde sous un
aspect quantique. Par sa puissance analytique et prédictive, elle
a permis d'ouvrir de nouvelles voies dans la recherche scientifique
et dans l'évolution de la technologie.

La mécanique quantique décrit la réalité physique avec des principes
et des postulats. Afin de mieux comprendre l'origine du cadre mathématique
de cette théorie, citons quelque postulats (pour plus de détails voir
par exemple \textbf{{[}97{]}}). Dans l'espace euclidien $\mathbb{R}^{n}$
la description quantique d'un point à l'instant $t$ se fait avec
une fonction d'onde, c'est-à-dire un vecteur $\varphi(t)\in L^{2}(\mathbb{R}^{n})$
que l'on interprète de la manière suivante : pour toute partie $\Omega$
de $\mathbb{R}^{n}$ le réel \[
\int_{\Omega}|\varphi(t)|^{2}dx_{1}...dx_{n}\]
est la probabilité de présence de la particule dans le domaine $\Omega$
à l'instant $t$; bien sur ceci impose la normalisation : \[
\int_{\mathbb{R}^{n}}|\varphi(t)|^{2}dx_{1}...dx_{n}=1.\]
Un second principe donne la dynamique (l'analogue quantique des équations
de Hamilton) : lorsque la particule est soumise à un champs de forces
dérivant d'un potentiel $V$, la fonction d'onde associé vérifie l'équation
de Schrödinger 

\[
ih\frac{d\varphi(t)}{dt}=H\varphi(t)\]
où $H=-\frac{\hbar^{2}}{2}\Delta+V.$ L'équation de Schrödinger tient
sa justification physique des ses conséquences. 

Un autre principe, encore bon a mentionner est celui concernant les
observables: a toute grandeur physique $\mathbf{a}$ on lui associe
un opérateur linéaire (à domaine) auto-adjoint $A$ agissant sur l'espaces
des fonction d'ondes de sorte que le réel \[
\int_{\Omega}A\varphi(t)\overline{\varphi(t)}dx_{1}...dx_{n}\]
représente la valeur moyenne des résultats de la mesure de la grandeur
$\mathbf{a}$.

\subsection{Rappels de théorie spectrale hilbertienne}

La théorie des opérateurs linéaires à domaine et l'étude de leurs
réduction, ce qu'on nomme théorie spectrale constitue les fondements
mathématiques de la mécanique quantique. En 1932, J. Von Neumman donne
la définition abstraite des espaces de Hilbert et il montre que les
points de vue de Heinsenberg et de Schrödinger sont équivalent, en
même temps il développe la théorie de réduction des opérateurs à domaine.
Le théorème spectral est l'un des résultat les plus profond de l'analyse
moderne, et est fondamental en mécanique quantique. Grâce à ce théorème
on peut introduire la notion de mesure spectrale associé à un état,
cette mesure conduit à une loi de probabilité sur $\mathbb{R}$, qui
amene à l'interprétation probabiliste de la mécanique quantique. Le
théorème spectral permet aussi des opérations sur les observables,
comme la composition, qui est similaire a ce qu'on sait déjà faire
en mécanique classique. En mécanique quantique, les observables sont
des opérateurs auto-adjoints à domaine, et d'un point de vue technique,
la difficulté de définir des fonctions d'opérateurs non bornée est
levée en grande partie par le théorème spectral, plus précisément
par son corollaire: le calcul fonctionnel. Un autre fait remarquable
en mécanique quantique est que les opérateurs ne commute pas, ceci
conduit aux fameuses relations d'incertitude de Heinsenberg.

On va rappeler quelques résultats de théorie spectrale avec cette
fois des démonstrations : le lemme central au théorème spectral concernant
les opérateurs non bornés est le suivant:
\begin{lem}
Soit $(X,\mathcal{F},\mu)$ un espace mesuré, que l'on notera plus
simplement $(X,\mu)$ et $F$ une fonction réelle finie $\mu$ presque
partout sur $X$. Alors l'opérateur de multiplication par $F$ définit
par :\begin{eqnarray*}
M_{F}:\; D(M_{F})=\left\{ \varphi\in L^{2}(X,\mu)/F\varphi\in L^{2}(X,\mu)\right\} \rightarrow L^{2}(X,\mu)\end{eqnarray*}
\begin{eqnarray*}
\varphi\mapsto F\varphi\end{eqnarray*}
est un opérateur à domaine dense et auto-adjoint. En outre si $F$
est bornée, l'opérateur $M_{F}$ est continue sur $L^{2}(X,\mu)$
avec $\left|\left|\left|M_{F}\right|\right|\right|\leq\left\Vert F\right\Vert _{\infty}$
.\end{lem}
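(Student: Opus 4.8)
The plan is to verify the three assertions in turn: density of the domain, self-adjointness, and the bound on the norm in the bounded case.

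First I would check that $D(M_F)$ is dense in $L^2(X,\mu)$. The natural idea is to exhaust $X$ by the sets $X_n := \{x \in X : |F(x)| \le n\}$. Since $F$ is finite $\mu$-almost everywhere, the sets $X_n$ increase to a set of full measure, so for any $\varphi \in L^2(X,\mu)$ the truncations $\varphi_n := \varphi \cdot \mathbf{1}_{X_n}$ satisfy $\varphi_n \to \varphi$ in $L^2$ by dominated convergence. Moreover $F\varphi_n$ is bounded by $n|\varphi|$, hence lies in $L^2$, so $\varphi_n \in D(M_F)$. This gives density.

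Next, self-adjointness. The symmetry of $M_F$ on its domain is immediate: for $\varphi,\psi \in D(M_F)$ one has $\langle M_F\varphi,\psi\rangle = \int_X F\varphi\bar\psi\,d\mu = \langle \varphi, M_F\psi\rangle$ since $F$ is real-valued (here one must check $F\varphi\bar\psi \in L^1$, which follows from Cauchy--Schwarz: $F\varphi \in L^2$ and $\psi \in L^2$). The real work is the inclusion $D(M_F^*) \subseteq D(M_F)$. So suppose $\psi \in D(M_F^*)$, meaning there is $g \in L^2$ with $\langle M_F\varphi,\psi\rangle = \langle \varphi, g\rangle$ for all $\varphi \in D(M_F)$. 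The goal is to show $g = F\psi$ a.e. (which forces $F\psi \in L^2$, i.e.\ $\psi \in D(M_F)$). I would test against $\varphi = \mathbf{1}_{X_n \cap E}$ for measurable $E$ of finite measure, which lies in $D(M_F)$; this yields $\int_{X_n \cap E} F\bar\psi\,d\mu = \int_{X_n\cap E}\bar g\,d\mu$ for all such $E$, hence $F\psi = g$ a.e.\ on each $X_n$, hence a.e.\ on $X$. I expect this step --- correctly manipulating the defining identity of the adjoint domain with a good supply of test functions --- to be the main obstacle, mostly of a bookkeeping nature (ensuring integrability and that the chosen test functions really exhaust the relevant sets).

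Finally, the bounded case. If $|F| \le \|F\|_\infty$ a.e., then for any $\varphi \in L^2$ we have $|F\varphi| \le \|F\|_\infty |\varphi|$ pointwise a.e., so $F\varphi \in L^2$ with $\|F\varphi\|_2 \le \|F\|_\infty \|\varphi\|_2$; thus $D(M_F) = L^2(X,\mu)$ and $\vvvert M_F\vvvert \le \|F\|_\infty$. (Strictly speaking the paper writes $\left|\left|\left|M_F\right|\right|\right|$ for the operator norm; I use that notation.) This last part is a one-line estimate and presents no difficulty.
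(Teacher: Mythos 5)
Your proof is correct, and the density and bounded-case parts coincide with the paper's argument (same truncation $\varphi_n=\varphi\,\chi_{\{|F|\le n\}}$ plus dominated convergence, same one-line estimate). Where you genuinely diverge is the self-adjointness step: the paper first proves that $M_F$ is \emph{closed}, notes symmetry, and then invokes the basic criterion for a closed symmetric operator by showing $\ker(M_F^{*}\pm iI)=\{0\}$ (testing the relation $\int_X \varphi(i-F)\overline{\psi}\,d\mu=0$ against the dense domain), whereas you compute the adjoint directly: you show $D(M_F^{*})\subseteq D(M_F)$ and $M_F^{*}\psi=F\psi$ by testing the defining identity of the adjoint against indicators $\mathbf{1}_{X_n\cap E}$ with $\mu(E)<\infty$. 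Your route is the standard textbook alternative; it buys you self-adjointness without ever mentioning closedness (which then comes for free, since adjoints are closed), while the paper's route trades the explicit identification of $M_F^{*}$ for an appeal to the deficiency-index criterion, at the cost of the extra closedness verification. One small point you should make explicit when writing it up: to pass from $\int_{X_n\cap E}\bigl(F\overline{\psi}-\overline{g}\bigr)\,d\mu=0$ for all finite-measure $E$ to $F\psi=g$ a.e.\ on $X_n$, observe that $F\overline{\psi}-\overline{g}\in L^{2}(X_n,\mu)$ (since $|F|\le n$ there), so each level set $\{|F\overline{\psi}-\overline{g}|>\varepsilon\}\cap X_n$ automatically has finite measure and can itself be used as a test set; this makes the argument work without any $\sigma$-finiteness assumption on $\mu$, which the lemma does not grant you.
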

\begin{proof}
Vérifions dans un premier temps que $D(M_{F})$ est dense dans $L^{2}(X,\mu)$:
Pour cela soit $\varphi\in L^{2}(X,\mu)$ et considérons la suite
$(\varphi_{n})_{n}$ de $L^{2}(X,\mu)$ définie par $\varphi_{n}=\varphi\chi_{(\left|F\right|\leq n)},$
$\chi$ désignant la fonction indicatrice. Compte tenu que pour tout
$n\in\mathbb{N}\,,\,|F\varphi_{n}|\leq n\varphi$, on a que $(\varphi_{n})_{n}$
est une suite de $D(M_{f})$. Ensuite comme $F$ est finie presque
partout il est clair que $(\varphi_{n})_{n}$ converge simplement
presque partout sur $X$ vers $\varphi$. Enfin comme pour tout $n\in\mathbb{N}\,,\,|\varphi_{n}|\leq\varphi\in L^{2}(X,\mu)$
on à grâce au théorème de convergence dominé de Lebesgue que $(\varphi_{n})_{n}$
converge dans $L^{2}(X,\mu)$ vers $\varphi$. 

Vérifions que l'opérateur $M_{F}$ est férmé : considérons donc une
suite $(\varphi_{n})_{n}\in(D(M_{F}))^{\mathbb{N}}$ telle que $\varphi_{n}$
converge dans $L^{2}(X,\mu)$ vers un certain $\varphi\in L^{2}(X,\mu)$
et que $M_{F}(\varphi_{n})$ converge dans $L^{2}(X,\mu)$ vers un
certain $\psi\in L^{2}(X,\mu)$. Avec la réciproque du théorème de
convergence dominé de Lebesgue on à d'une part que, quitte à extraire
une sous suite, $\varphi_{n}$ converge simplement presque partout
sur $X$ vers $\varphi$, donc en particulier $F\varphi_{n}$ converge
simplement presque partout sur $X$ vers $F\varphi$, ainsi $\psi=F\varphi$
presque partout. Donc $\varphi\in D(M_{F})$ et $M_{F}(\varphi)=\psi$. 

Ensuite montrons que $M_{F}$ est auto-adjoint: comme $F$ est à valeurs
réelles $M_{F}$ est trivialement symétrique sur $D(M_{F})$, maintenant
pour montrer que $M_{F}$ est auto-adjoint montrons que $\ker(M_{F}^{*}\pm iI)=\left\{ 0\right\} $,
prenons donc $\varphi\in\ker(M_{F}^{*}-iI)$, on a d'une part que
$M_{F}^{*}(\varphi)=i\varphi$, et d'autre part comme:\[
\forall\psi\in D(M_{F}),\;<M_{F}^{*}(\varphi),\psi>_{L^{2}}=<\varphi,M_{F}(\psi)>_{L^{2}}\]
on a que\[
\forall\psi\in D(M_{F}),\;\int_{X}i\varphi\overline{\psi}\, d\mu=\int_{X}\varphi\overline{F\psi}\, d\mu\]
c'est à dire:\[
\forall\psi\in D(M_{F}),\;\int_{X}\varphi(i-F)\overline{\psi}\, d\mu=0.\]
Donc comme $D(M_{F})$ est dense dans $L^{2}(X,\mu)$ et $F$ est
à valeurs réelles, on a que $\varphi=0$. De même on a que $\ker(M_{F}^{*}+iI)=\left\{ 0\right\} $.
Donc $M_{F}$ est bien auto-adjoint. 

Ensuite le cas où $F$ bornée est trivial.
\end{proof}
Citons un théorème issu de la théorie spectrale sur les $\mathbb{C}^{*}$algèbres
\textbf{{[}6{]}}, \textbf{{[}7{]}} : 
\begin{thm}
\textbf{(Théorème spectral des opérateurs normaux bornés).} Soit $N$
un opérateur borné normal sur un hilbert $\mathcal{H}$ séparable.
Il existe un espace mesuré fini $(Y$,$\mu)$ et une fonction bornée
$F$ sur $Y$, tels que $N$ soit unitairement équivalent à la multiplication
par $F$ dans $L^{2}(X,\mu)$ au sens du lemme .
\end{thm}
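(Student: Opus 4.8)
The plan is to build the multiplication-operator model from the Gelfand theory of the unital commutative $C^{*}$-algebra generated by $N$, together with a separating/cyclic vector decomposition to handle the absence of a single cyclic vector. First I would let $\mathcal{A}$ be the norm-closed unital subalgebra of $\mathcal{B}(\mathcal{H})$ generated by $N$ and $N^{*}$; since $N$ is normal, $\mathcal{A}$ is commutative, so by the Gelfand--Naimark theorem its spectrum (maximal ideal space) $X = \widehat{\mathcal{A}}$ is a compact Hausdorff space and the Gelfand transform gives an isometric $*$-isomorphism $\mathcal{A} \cong \mathcal{C}(X)$. Concretely one can identify $X$ with $\mathrm{Sp}(N) \subset \mathbb{C}$, so that this isomorphism is exactly the continuous functional calculus $f \mapsto f(N)$.

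Next I would reduce to the cyclic case. Because $\mathcal{H}$ is separable, a standard Zorn's lemma argument produces an at most countable orthogonal decomposition $\mathcal{H} = \bigoplus_{k} \mathcal{H}_{k}$, where each $\mathcal{H}_{k}$ is a closed $\mathcal{A}$-invariant subspace possessing a cyclic vector $e_{k}$ for $\mathcal{A}$ (i.e. $\overline{\mathcal{A} e_{k}} = \mathcal{H}_{k}$), and one may normalise $\sum_{k} \|e_{k}\|^{2} < \infty$. On a single cyclic piece $\mathcal{H}_{k}$, the positive linear functional $f \mapsto \langle f(N) e_{k}, e_{k}\rangle$ on $\mathcal{C}(X)$ is, by the Riesz representation theorem, integration against a finite positive Borel measure $\mu_{k}$ on $X$; the map $f(N) e_{k} \mapsto f \in L^{2}(X,\mu_{k})$ then extends to a unitary $U_{k} \colon \mathcal{H}_{k} \to L^{2}(X,\mu_{k})$ intertwining $N|_{\mathcal{H}_{k}}$ with multiplication by the coordinate function $z$.

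Finally I would assemble the global model. Take $Y$ to be the disjoint union $\bigsqcup_{k} X_{k}$ of disjoint copies of $X$, equipped with the measure $\mu$ restricting to $\mu_{k}$ on $X_{k}$; then $\mu(Y) = \sum_{k}\mu_{k}(X) = \sum_k \|e_k\|^2 < \infty$, so $(Y,\mu)$ is a finite measure space, and $\bigoplus_{k} L^{2}(X,\mu_{k}) \cong L^{2}(Y,\mu)$ canonically. The unitary $U = \bigoplus_{k} U_{k}$ then conjugates $N$ into $M_{F}$, where $F$ is the bounded measurable function equal to the coordinate $z$ on each copy $X_{k}$; boundedness of $F$ follows since $|z| \le \|N\|$ on $\mathrm{Sp}(N)$, and $M_F$ is self-adjoint/normal in the sense of the preceding lemma. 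The main obstacle is the reduction to the cyclic case: one must verify carefully that the maximal family of mutually orthogonal cyclic subspaces furnished by Zorn's lemma actually exhausts $\mathcal{H}$ (if $\mathcal{K} = (\bigoplus_k \mathcal{H}_k)^{\perp}$ were nonzero, any nonzero vector in it is cyclic for its own cyclic subspace, which lies in $\mathcal{K}$, contradicting maximality) and that separability of $\mathcal{H}$ forces the index set to be countable so that $L^2(Y,\mu)$ is genuinely a single $L^2$-space of the type appearing in the lemma. The rest is a routine check that $U_k$ is well-defined, isometric with dense range, and intertwines the operators.
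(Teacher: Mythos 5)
Your argument is correct, and it is essentially the canonical proof of this theorem: the paper itself does not prove the statement but merely cites it from the $C^{*}$-algebra literature (the references \textbf{[6]}, \textbf{[7]}), and the route you follow — Gelfand--Naimark applied to the commutative $C^{*}$-algebra generated by $N$ and $N^{*}$, identification of its spectrum with $\mathrm{Sp}(N)$, reduction to cyclic subspaces via a maximal orthogonal family, the Riesz representation theorem to produce the measures $\mu_{k}$, and the assembly of the finite measure space $(Y,\mu)$ as a countable disjoint union with $\mu(Y)=\sum_{k}\Vert e_{k}\Vert^{2}<\infty$ after rescaling the cyclic vectors — is exactly what those references do. The key verifications you flag (well-definedness and isometry of $U_{k}$ via $\Vert f(N)e_{k}\Vert^{2}=\int_{X}|f|^{2}\,d\mu_{k}$, exhaustion of $\mathcal{H}$ by maximality, countability from separability) are indeed the only points requiring care, and your sketches of them are sound. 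One small remark on the interface with the paper: the lemma invoked in the statement (\og au sens du lemme \fg) is formulated for a \emph{real-valued} $F$, so that $M_{F}$ is self-adjoint, whereas for a general normal $N$ your $F$ is complex-valued and $M_{F}$ is only normal, with $M_{F}^{*}=M_{\overline{F}}$; this is a looseness of the paper's phrasing rather than a gap in your proof, but it is worth stating explicitly that the domain and adjoint computations of the lemma carry over verbatim to complex $F$ (here trivially, since $F$ is bounded and $M_{F}$ is everywhere defined and continuous).
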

A partir de ce théorème, on va pouvoir en donner une version pour
les opérateurs auto-adjoint non bornés. Commençons par des notations
:
\begin{notation}
On va considérer un opérateur à domaine $(A,D(A))$ auto-adjoint sur
un hilbert $\mathcal{H},<.>_{\mathcal{H}}$ séparable. Comme $\sigma(A)\subset\mathbb{R}$,
on a que $\pm i$ ne sont pas dans le spectre de $A$, on notera alors
$R_{\pm i}=(\pm iI+A)^{-1}$ les résolvantes en ces points.
\end{notation}
Du précédent théorème et avec les notations ci-dessus on a :
\begin{cor}
Avec les notations précédentes, il existe un espace mesuré fini $(Y,\mu)$,
un opérateur unitaire $U:\mathcal{H}\rightarrow L^{2}(Y,\mu)$ et
une fonction bornée $F$ non nulle $\mu$ presque partout sur $Y$,
tels que avec les notations du lemme:\[
UR_{i}U^{-1}=M_{F}.\]
\end{cor}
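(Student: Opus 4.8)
The plan is to reduce everything to the spectral theorem for bounded normal operators, applied to the resolvent $N=R_i$. First I would check that $R_i$ is a well-defined bounded operator on $\mathcal{H}$: since $A$ is self-adjoint its spectrum is contained in $\mathbb{R}$, so $\pm i\notin\sigma(A)$, the operator $iI+A$ is a bijection from $D(A)$ onto $\mathcal{H}$ with bounded inverse $R_i$, and in fact $\left|\left|\left|R_i\right|\right|\right|\leq 1$. Next I would verify that $R_i$ is normal. Taking adjoints and using that $A$ is self-adjoint, $R_i^{*}=\left((iI+A)^{-1}\right)^{*}=\left((iI+A)^{*}\right)^{-1}=(-iI+A)^{-1}=R_{-i}$. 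Moreover $R_i$ and $R_{-i}$ commute: this follows either from the resolvent identity or simply from the fact that $iI+A$ and $-iI+A$ commute on $D(A)$. Hence $R_iR_i^{*}=R_iR_{-i}=R_{-i}R_i=R_i^{*}R_i$, so $R_i$ is a bounded normal operator on the separable Hilbert space $\mathcal{H}$.

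I would then apply the spectral theorem for bounded normal operators (the theorem stated just above) to $N=R_i$: there exist a finite measure space $(Y,\mu)$, a bounded function $F$ on $Y$, and a unitary operator $U:\mathcal{H}\rightarrow L^{2}(Y,\mu)$ such that $UR_iU^{-1}=M_F$ in the sense of the lemma. It then remains to upgrade \og $F$ born\'ee \fg{} to \og $F$ born\'ee et non nulle $\mu$-presque partout \fg. This comes from the injectivity of $R_i$: since $R_i$ is a bijection of $\mathcal{H}$ onto $D(A)$ we have $\ker R_i=\{0\}$, and conjugating by the unitary $U$ gives $\ker M_F=\{0\}$. Now if the set $E=\{y\in Y:F(y)=0\}$ had strictly positive measure, every nonzero element of $L^{2}(Y,\mu)$ supported in $E$ would lie in $\ker M_F$, a contradiction; hence $\mu(E)=0$, that is $F\neq 0$ $\mu$-almost everywhere.

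The only genuinely non-routine point is the pair of identities $R_i^{*}=R_{-i}$ and $R_iR_{-i}=R_{-i}R_i$, which together make $R_i$ normal and thus allow the bounded spectral theorem to be invoked at all; once these are in hand, the rest is just bookkeeping around that theorem and the injectivity of the resolvent. (This corollary is also the decisive step towards the full spectral theorem for unbounded self-adjoint operators: since $F\neq 0$ almost everywhere, one recovers $A$ as unitarily equivalent, via the same $U$, to multiplication by $\frac{1}{F}-i$, a function that is finite $\mu$-almost everywhere.)
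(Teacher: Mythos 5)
Your argument is correct and follows essentially the same route as the paper: establish $R_i^{*}=R_{-i}$ and the commutation of the two resolvents to get normality (the paper does the adjoint identity by an explicit inner-product computation using the surjectivity of $\pm iI+A$, while you invoke the standard fact $\left(\left(iI+A\right)^{-1}\right)^{*}=\left(\left(iI+A\right)^{*}\right)^{-1}$, which amounts to the same thing), then apply the spectral theorem for bounded normal operators and deduce $F\neq0$ $\mu$-presque partout from the injectivity of $R_i$. No gaps to report.
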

\begin{proof}
Vérifions dans un premier temps que les opérateurs $R_{\pm i}$ sont
normaux : comme $Im(\pm iI+A)=\mathcal{H}$, on en déduit donc que
: $\forall(u,v)\in\mathcal{H}^{2}$, $\exists!(z,w)\in D(A)^{2}$
tel que $v=(-iI+A)(z)$ et $u=(iI+A)(w)$, donc on a :\[
\left\langle v,R_{i}u\right\rangle _{\mathcal{H}}=\left\langle (-iI+A)(z),w\right\rangle _{\mathcal{H}}\]
\[
=\left\langle z,(-iI+A)^{*}w\right\rangle _{\mathcal{H}}=\left\langle z,(iI+A)w\right\rangle _{\mathcal{H}}\]
\[
=\left\langle R_{-i}(v),u\right\rangle _{\mathcal{H}}\]
ce qui prouve que $R_{\pm i}^{*}=R_{\mp i}$. D'autre part d'après
l'équation résolvante, les résolvantes $R_{i}$ et $R_{-i}$ commutent,
ainsi $R_{i}$ et $R_{-i}$ sont des opérateurs bornés normaux. On
utilise ensuite le théorème : le seul point non contenu dans le théorème
est la non nullité de la fonction $F$: par définition $R_{i}$ est
injectif, donc par conjugaison unitaire $M_{F}$ doit l'être aussi,
c'est à dire que $F$ soit non nulle $\mu$ presque partout.
\end{proof}
Du cas particulier de la résolvante on a le théorème spectral sous
sa forme multiplicative pour les opérateurs auto-adjoints à domaine:
\begin{cor}
\textbf{(Théorème spectral multiplicatif).} Toujours avec les notations
précédentes, il existe un espace mesuré fini $(Y,\mu)$, un opérateur
unitaire $U:\,\mathcal{H}\rightarrow L^{2}(Y,\mu)$ et une fonction
réelle $f$ finie $\mu$ presque partout sur $Y$, tels que avec les
notations du lemme $v\in D(A)\Leftrightarrow U(v)\in D(M_{f})$ et
puis \[
UAU^{-1}=M_{f}\; sur\; U(D(A))\]
\end{cor}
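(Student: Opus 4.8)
The plan is to \emph{invert} the identification furnished by the preceding corollary, $U R_i U^{-1}=M_F$, and read off $A$ from $R_i=(iI+A)^{-1}$. Since $i\notin\sigma(A)$, the bounded operator $R_i$ is a bijection of $\mathcal H$ onto $D(A)$ with $R_i^{-1}=iI+A$ on $D(A)$; hence $A=R_i^{-1}-iI$ and, formally, $UAU^{-1}=(UR_iU^{-1})^{-1}-iI=M_F^{-1}-iI$. This dictates the candidate multiplier
\[
f:=\frac{1}{F}-i,
\]
which is well defined and finite $\mu$-almost everywhere precisely because the previous corollary guarantees $F\neq 0$ $\mu$-a.e. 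We keep the very same measure space $(Y,\mu)$ and unitary $U$.

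The substance is then a short computation with multiplication operators on $L^2(Y,\mu)$. First I would record the elementary fact that if a measurable $G$ is nonzero $\mu$-a.e., then $M_G$ is injective, its range is exactly $D(M_{1/G})=\{\psi\in L^2(Y,\mu):\psi/G\in L^2(Y,\mu)\}$, and $M_G^{-1}=M_{1/G}$ on that domain: indeed $\psi=G\varphi$ with $\varphi,G\varphi\in L^2$ forces $\varphi=\psi/G$, and conversely. Applying this with $G=F$ and using that $U$ is unitary gives $U R_i^{-1}U^{-1}=M_{1/F}$, together with the exact matching of domains $U(D(A))=U(\operatorname{Range}R_i)=\operatorname{Range}M_F=D(M_{1/F})=D(M_f)$ (subtracting the bounded constant $i$ does not change the domain). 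Thus $v\in D(A)\iff U(v)\in D(M_f)$, and on this domain
\[
UAU^{-1}=UR_i^{-1}U^{-1}-iI=M_{1/F}-iI=M_{1/F-i}=M_f .
\]
By the Lemma, once we know $f$ is real and finite $\mu$-a.e. this operator is automatically self-adjoint with dense domain, in agreement with $D(A)$ being dense.

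It remains to see that $f$ may be taken real-valued $\mu$-a.e., the only point that is not purely formal. For this I would use the identity $R_i^*=R_{-i}$ proved in the argument for the corollary above: conjugating by $U$ yields $U R_{-i}U^{-1}=(UR_iU^{-1})^*=M_F^*=M_{\overline F}$, and repeating the inversion with $R_{-i}=(-iI+A)^{-1}$, $R_{-i}^{-1}=-iI+A$, produces a second description $UAU^{-1}=M_{1/\overline F}+iI=M_{1/\overline F+i}$. Since $\overline{1/F-i}=1/\overline F+i$, the two descriptions of the single operator $UAU^{-1}$ force $f=\overline f$ $\mu$-a.e.; indeed two multiplications by $\mu$-a.e.-finite functions that agree as operators have a.e. equal multipliers, as one sees by testing on $\chi_E$ for $E$ of finite measure on which both multipliers are bounded and letting such $E$ exhaust $Y$. (Equivalently, one may invoke the symmetry of $M_f=UAU^{-1}$ on these same characteristic functions.) Replacing $f$ by $\operatorname{Re}f$ if desired changes nothing. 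I expect the genuine obstacle to be none of these individual steps but the careful bookkeeping of domains for unbounded operators — in particular justifying $U R_i^{-1}U^{-1}=(UR_iU^{-1})^{-1}$ and $M_F^{-1}=M_{1/F}$ as equalities of operators on their natural domains, not merely on a common core.
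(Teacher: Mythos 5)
Your proof is correct and, in its core, takes the same route as the paper: both work with the same $(Y,\mu)$ and $U$, define the same multiplier $f=\frac{1}{F}-i$ (finite $\mu$-almost everywhere because $F\neq 0$ $\mu$-almost everywhere), and convert $UR_iU^{-1}=M_F$ into the domain equivalence $v\in D(A)\Leftrightarrow Uv\in D(M_f)$ and the identity $UAU^{-1}=M_f$ on $U(D(A))$. The paper does this element-wise (writing $v=R_iu$, multiplying $Uv=M_FUu$ by $f$ or by $F$ to pass between the two domains, and using the resolvent identity $Av=u-iv$), whereas you package the same algebra as inversion of multiplication operators via $M_F^{-1}=M_{1/F}$ and $\textrm{Range}\,M_F=D(M_{1/F})$, together with $U(D(A))=U(\textrm{Range}\,R_i)=\textrm{Range}\,M_F=D(M_f)$; these are equivalent computations and your domain bookkeeping is sound. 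The one genuinely different step is the reality of $f$: the paper tests the symmetry of $M_f$ on characteristic functions $\chi_\Omega$ (and, as written, it invokes the Lemma for self-adjointness of $M_f$, which strictly speaking presupposes $f$ real; the honest justification there is that $M_f=UAU^{-1}$ is unitarily conjugate to the self-adjoint $A$, hence symmetric, and one must also truncate $\Omega$ to sets where $f$ is bounded so that $\chi_\Omega\in D(M_f)$). Your primary argument instead uses $R_i^{*}=R_{-i}$ to produce the second representation $UAU^{-1}=M_{1/\overline{F}+i}$ and deduces $f=\overline{f}$ $\mu$-almost everywhere by testing on $\chi_{E_n}$ with $E_n=\{|f|\le n\}$ exhausting $Y$ (legitimate since $\mu$ is finite and $f$ is finite $\mu$-almost everywhere); this variant is clean, avoids the small circularity in the paper's wording, and your parenthetical alternative is exactly the paper's argument.
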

\begin{proof}
Posons $f=\frac{1}{F}-i$ qui est donc finie $\mu$ presque partout
sur $Y$ car $F$ est elle même non nulle $\mu$ presque partout sur
$Y$. Notons bien que par construction $F(f+i)=1$ et $UR_{i}=M_{F}U$. 

Montrons que $v\in D(A)\Leftrightarrow U(v)\in D(M_{f})$: Soit $v\in D(A)$
alors $\exists!u\in\mathcal{H}$ tel que $v=R_{i}u$. Donc $Uv=M_{F}Uu$.
En multipliant cette égalité par $f$ on a : $fUv=(1-iF)Uu$. Comme
$Uu\in L^{2}(Y,\mu)$ et que la fonction $F$ est bornée on en déduit
que $fUv\in L^{2}(Y,\mu)$, ie: $Uv\in D(M_{f})$. Réciproquement
si $Uv\in D(M_{f})$ alors d'une part $(f+i)Uv\in L^{2}(Y,\mu)$,
d'autre part comme $U$ est unitaire entre $\mathcal{H}$ et $L^{2}(Y,\mu)$,
$\exists!u\in\mathcal{H}$ tel que $(f+i)Uv=Uu$. Enfin en multipliant
cette égalité par $F$ on obtient : $Uv=FUu$, donc $v=U^{-1}FUu=R_{i}u$
donc en particulier $v\in D(A)$. 

Montrons maintenant $UAU^{-1}=M_{f}\; sur\; U(D(A))$: notons bien
que $\forall u\in\mathcal{H}$ $FUu=UR_{i}u$ et donc $Uu=\frac{1}{F}UR_{i}u$.
Prenons $v\in D(A)$ $\exists!u\in\mathcal{H}$ tel que $v=R_{i}u$;
avec l'équation résolvante on obtient que $Av=u-iv$, donc $UAv=(\frac{1}{F}-i)Uv$,
soit encore $UAv=fUv$ ce qui montre l'égalité.

Reste à vérifier que la fonction $f$ est à valeurs réelles : si on
suppose que la partie imaginaire $\textrm{Im}(f)$ de $f$ est strictement
positive sur un sous ensemble $\Omega$ de $Y$ tels que $\mu(\Omega)>0$,
compte tenu que $(Y,\mu)$ est un mesuré finie $\chi_{\Omega}\in D(M_{f})$.
Maintenant on sait avec le lemme précédent que $M_{f}$ est auto-adjoint,
donc $\left\langle \chi_{\Omega},\, f\chi_{\Omega}\right\rangle _{L^{2}}\in\mathbb{R}$,
ce qui est absurde par définition de $\Omega$, donc au final $f$
est à valeurs réelles $\mu$ presque partout. Ce qui montre le corollaire.$\square$
\end{proof}
A partir de ce corollaire on en déduit le calcul fonctionnel borné
standard:
\begin{defn}
Soit $h\in L^{\infty}(\mathbb{R})$, avec les notations du corollaire
précédent, on définit l'opérateur borné $h(A)$ sur $\mathcal{H}$
par : \[
h(A)=U^{-1}M_{hof}U.\]

\end{defn}
On montre facilement le théorème suivant qui servira fortement dans
la suite :
\begin{thm}
\textbf{(Calcul fonctionnel borné)}. Soit $A,D(A)$ un opérateur auto-adjoint
sur $\mathcal{H}$. L'application $\phi$ définit par :\[
\phi:\left\{ \begin{array}{cc}
L^{\infty}(\mathbb{R})\rightarrow L_{c}(\mathcal{H})\\
\\h\mapsto h(A)\end{array}\right.\]
vérifie les propriétés suivantes:

\textbf{(i)} $\phi$ est un $\star$-homomorphisme d'algèbre normées
continu, et on a :\[
\forall h\in L^{\infty}(\mathbb{R}),\;\left|\left|\left|h(A)\right|\right|\right|\leq\left\Vert h\right\Vert _{\infty}\]
\textbf{(ii)} Si $(h_{n})_{n}\in(L^{\infty}(\mathbb{R}))^{\mathbb{N}}$
telle que $(x\mapsto h_{n}(x))_{n}$ converge simplement vers $x\mapsto x$
et $\forall(x,n)\in\mathbb{R}\times\mathbb{N}$ on ait $\left|h_{n}(x)\right|\leq\left|x\right|$
alors :\[
\forall u\in D(T),\; h_{n}(A)u\rightarrow Au\; dans\,\mathcal{H}\]
\textbf{(iii)} Si $(\lambda,u)\in\sigma_{p}(A)\times(\mathcal{H}-{0})$
tels que $Au=\lambda u$, alors si $h\in L^{\infty}(\mathbb{R}),$
$h(\lambda)\in\sigma_{p}(h(T))$ et $h(A)u=h(\lambda)u$.
\end{thm}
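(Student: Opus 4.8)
The plan is to reduce everything to the multiplicative model furnished by the previous corollary, where $A$ becomes $M_f$ on $L^2(Y,\mu)$ via the unitary $U$, and then to transport elementary facts about multiplication operators back to $\mathcal{H}$. First I would fix the unitary $U:\mathcal{H}\to L^2(Y,\mu)$ and the real-valued, $\mu$-almost-everywhere finite function $f$ given by the multiplicative spectral theorem, and recall that by definition $h(A)=U^{-1}M_{h\circ f}U$. The key point to check at the outset is that $h\circ f\in L^\infty(Y,\mu)$ whenever $h\in L^\infty(\mathbb{R})$: indeed $\|h\circ f\|_{L^\infty(Y,\mu)}\le\|h\|_{L^\infty(\mathbb{R})}$ since $f$ is real and finite a.e., so by the last assertion of the central lemma $M_{h\circ f}$ is bounded with $\vvvert M_{h\circ f}\vvvert\le\|h\|_\infty$. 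Conjugating by the unitary $U$ preserves operator norm, which already gives the norm bound in (i).

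For the rest of (i), I would verify that $h\mapsto M_{h\circ f}$ is a $\star$-homomorphism of normed algebras: linearity is immediate, $(h_1h_2)\circ f=(h_1\circ f)(h_2\circ f)$ gives multiplicativity $M_{(h_1h_2)\circ f}=M_{h_1\circ f}M_{h_2\circ f}$, and $\overline{h\circ f}=\bar h\circ f$ together with the fact that the adjoint of a multiplication operator by $g$ is multiplication by $\bar g$ gives $M_{h\circ f}^*=M_{\bar h\circ f}$. Since $U^{-1}(\cdot)U$ is a $\star$-isomorphism of $L_c(L^2(Y,\mu))$ onto $L_c(\mathcal{H})$, the composite $\phi$ inherits all these properties; continuity follows from the norm estimate already established.

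For (ii), let $(h_n)_n$ converge pointwise to $x\mapsto x$ with $|h_n(x)|\le|x|$. For $u\in D(A)$, writing $v=Uu\in D(M_f)$, i.e. $f\cdot v\in L^2(Y,\mu)$, I would observe that $(h_n\circ f)\cdot v\to f\cdot v$ pointwise $\mu$-a.e. (since $f$ is finite a.e. and $h_n\to\mathrm{id}$ pointwise) and $|(h_n\circ f)\cdot v|\le|f\cdot v|\in L^2(Y,\mu)$, so Lebesgue dominated convergence gives $M_{h_n\circ f}v\to M_f v$ in $L^2(Y,\mu)$; applying $U^{-1}$ yields $h_n(A)u\to Au$ in $\mathcal{H}$. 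Here I should note that $D(M_f)=U(D(A))$ by the previous corollary, so this is exactly the claim (with $T=A$). For (iii), if $Au=\lambda u$ with $u\neq 0$, then $M_f(Uu)=\lambda Uu$, which forces $f=\lambda$ $\mu$-a.e. on the set $\{Uu\neq 0\}$, a set of positive measure; hence $M_{h\circ f}(Uu)=h(\lambda)Uu$ on that set, i.e. $h(A)u=h(\lambda)u$, and in particular $h(\lambda)\in\sigma_p(h(A))$ since $u\neq 0$.

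The only genuinely delicate point is the domain identification and the measurability bookkeeping in (ii) — making sure $h\circ f$ is measurable (composition of a Borel function with a measurable function) and that the dominating function $f\cdot v$ is genuinely in $L^2$, which is precisely the statement $v\in D(M_f)$. Everything else is a routine transport of the elementary theory of multiplication operators through the unitary equivalence, so the main obstacle is simply being careful that the model from the corollary is invoked with its full strength, namely the equivalence $v\in D(A)\Leftrightarrow U(v)\in D(M_f)$ and not merely the algebraic identity $UAU^{-1}=M_f$.
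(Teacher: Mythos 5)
Your proof is correct and follows exactly the route the paper intends: the theorem is stated there without proof (``on montre facilement''), and its definition $h(A)=U^{-1}M_{h\circ f}U$, the multiplication-operator lemma and the multiplicative spectral corollary are precisely what you transport through the unitary $U$; your verifications of (i), of (ii) via dominated convergence together with the domain identification $U(D(A))=D(M_{f})$, and of (iii) via $f=\lambda$ $\mu$-presque partout sur $\left\{ Uu\neq0\right\} $ are all sound. The only caveat, inherited from the paper's own formulation rather than introduced by you, is that for $h$ viewed as a Lebesgue class in $L^{\infty}(\mathbb{R})$ the composition $h\circ f$ and the value $h(\lambda)$ are only well defined after choosing a bounded Borel representative of $h$ (with the sup norm), under which reading your estimate $\left\Vert h\circ f\right\Vert _{L^{\infty}(Y,\mu)}\leq\left\Vert h\right\Vert _{\infty}$ is immediate.
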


\subsection{Généralités sur la dynamique quantique}

La dynamique constitue un aspect essentiel de la mécanique quantique,
elle détermine au cours du temps les états quantiques, et par conséquent
l'espace de Hilbert des états ou des opérateurs agissant sur cet espace.
En considérant une grandeur physique mesurable par un observateur,
les postulats de la mécanique quantique indiquent qu'il peut être
associé à cette grandeur physique un opérateur auto-adjoint agissant
sur l'espace des états, et que le résultat de la mesure donnera: soit
la valeur propre de l'opérateur considéré si l'état quantique est
unique (cas pur), soit la valeur propre pondérée par la probabilité
d'existence d'un état quantique. La mesure de l'observable peut changer
au cours du temps, est-ce l'état quantique qui va évoluer au cours
du temps? Ou est-ce l'opérateur? Ou encore les deux en même temps?
Ces différents points de vue conduisent à des descriptions différentes
de la dynamique quantique. Le point de vue de Schrödinger est que
l'espace des états du Hilbert évoluent au cours du temps tandis que
les opérateurs sont invariant temporellement. Mathématiquement, à
partir d'un opérateur auto-adjoint on peut définir la dynamique quantique
via le calcul fonctionnel borné comme étant un groupe unitaire à un
paramètre (ici le temps) fortement continu, en effet (voir par exemple
\textbf{{[}85{]}} ou \textbf{{[}93{]}}):
\begin{thm}
Soit $A,D(A)$ un opérateur auto-adjoint sur un hilbert $\mathcal{H}$
alors la famille d'opérateur bornée:\begin{eqnarray*}
U(t)=\left\{ e^{itA}\right\} _{t\in\mathbb{R}}\end{eqnarray*}
est un groupe unitaire fortement continu, de générateur $\left(iA,D(A)\right)$.
\end{thm}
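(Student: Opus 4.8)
Le plan consiste \`a d\'eduire ce r\'esultat du calcul fonctionnel born\'e et du th\'eor\`eme spectral multiplicatif d\'ej\`a \'etablis. On fixe $(Y,\mu)$, l'op\'erateur unitaire $U:\mathcal{H}\to L^2(Y,\mu)$ et la fonction r\'eelle $f$ finie $\mu$-presque partout tels que $UAU^{-1}=M_f$, et pour $t\in\mathbb{R}$ on pose $h_t:x\mapsto e^{itx}\in L^\infty(\mathbb{R})$, qui v\'erifie $\|h_t\|_\infty=1$. Par d\'efinition du calcul fonctionnel, $U(t)=h_t(A)=U^{-1}M_{h_t\circ f}U$ est born\'e. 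Les propri\'et\'es de $\star$-homomorphisme de $\phi$ fournissent alors directement la structure de groupe : $U(0)=\phi(h_0)=I$ puisque $h_0\equiv 1$, $U(t)U(s)=\phi(h_t h_s)=\phi(h_{t+s})=U(t+s)$, et $U(t)^{*}=\phi(\overline{h_t})=\phi(h_{-t})=U(-t)$ ; comme $U(t)U(-t)=U(-t)U(t)=I$, chaque $U(t)$ est unitaire (on le voit aussi sur l'\'egalit\'e $|h_t\circ f|=1$ qui rend $M_{h_t\circ f}$ unitaire sur $L^2(Y,\mu)$).

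Pour la continuit\'e forte, on se ram\`ene par conjugaison \`a la multiplication : pour $v\in\mathcal{H}$ et $g=Uv$, on a
\[
\|U(t)v-v\|_{\mathcal{H}}^{2}=\int_{Y}\bigl|e^{itf(y)}-1\bigr|^{2}\,|g(y)|^{2}\,d\mu(y).
\]
Lorsque $t\to 0$, l'int\'egrande tend vers $0$ $\mu$-presque partout (car $f$ est finie p.p.) et est major\'ee par $4\,|g|^{2}\in L^{1}(Y,\mu)$ ; le th\'eor\`eme de convergence domin\'ee de Lebesgue donne donc $U(t)v\to v$. La continuit\'e forte en un $t_0\in\mathbb{R}$ quelconque r\'esulte alors de l'\'egalit\'e $\|U(t)v-U(t_0)v\|=\|U(t-t_0)v-v\|$ et de l'unitarit\'e.

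Il reste \`a identifier le g\'en\'erateur $B$ de ce groupe. Si $v\in D(A)$, alors $g=Uv\in D(M_f)$, c'est-\`a-dire $fg\in L^2(Y,\mu)$ ; comme $\bigl(e^{itf(y)}-1\bigr)/t\to if(y)$ $\mu$-p.p. avec $\bigl|(e^{itf(y)}-1)/t\bigr|\le|f(y)|$, une application du th\'eor\`eme de convergence domin\'ee, de majorante $|fg|^{2}\in L^{1}(Y,\mu)$, donne
\[
\frac{U(t)v-v}{t}=U^{-1}M_{(e^{itf}-1)/t}\,Ug\ \longrightarrow\ U^{-1}(ifg)=iAv\quad(t\to 0)
\]
dans $\mathcal{H}$, donc $v\in D(B)$ et $Bv=iAv$. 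R\'eciproquement, soit $v\in D(B)$ ; alors $\frac{M_{e^{itf}}g-g}{t}$ converge dans $L^2(Y,\mu)$ quand $t\to 0$, et en extrayant une suite $t_n\to 0$ le long de laquelle la convergence a lieu $\mu$-presque partout, la comparaison avec la limite ponctuelle $ifg$ donne $ifg\in L^2(Y,\mu)$, donc $g\in D(M_f)$, soit $v\in D(A)$ (et de nouveau $Bv=iAv$). Ainsi $D(B)=D(A)$ et $B=iA$.

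L'obstacle principal est pr\'ecis\'ement cette derni\`ere inclusion $D(B)\subset D(A)$ : elle exprime le fait non trivial que la dynamique quantique d\'etermine enti\`erement l'observable auto-adjointe. Ce qui la rend accessible ici, c'est la repr\'esentation multiplicative : la difficult\'e analytique ayant d\'ej\`a \'et\'e absorb\'ee dans le th\'eor\`eme spectral multiplicatif, il ne reste qu'un argument \'el\'ementaire de convergence presque partout le long d'une sous-suite, joint \`a l'unicit\'e de la limite dans $L^2$.
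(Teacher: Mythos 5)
Votre preuve est correcte ; notez d'abord que le papier ne d\'emontre pas ce th\'eor\`eme : il l'\'enonce en renvoyant \`a Kato \textbf{[85]} et L\'evy-Bruhl \textbf{[93]}, en indiquant seulement que la dynamique se construit via le calcul fonctionnel born\'e. C'est pr\'ecis\'ement la strat\'egie que vous mettez en \oe uvre, en restant dans la bo\^ite \`a outils du texte : d\'efinition de $e^{itA}=U^{-1}M_{h_t\circ f}U$ par le calcul fonctionnel, loi de groupe et unitarit\'e par la propri\'et\'e de $\star$-homomorphisme (ou par $|h_t\circ f|=1$), continuit\'e forte et d\'erivabilit\'e sur $D(A)$ par convergence domin\'ee avec l'in\'egalit\'e $|e^{i\theta}-1|\leq|\theta|$, et surtout l'inclusion d\'elicate $D(B)\subset D(A)$ obtenue par extraction d'une sous-suite convergeant $\mu$-presque partout et unicit\'e de la limite dans $L^2$ ; cet argument mesure-th\'eorique est valable et \'evite l'alternative classique (maximalit\'e des op\'erateurs auto-adjoints jointe au th\'eor\`eme de Stone) que suivent souvent les r\'ef\'erences cit\'ees. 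Deux broutilles \`a corriger : dans la formule du quotient diff\'erentiel, \'ecrire $U^{-1}M_{(e^{itf}-1)/t}\,Uv$ (ou $U^{-1}M_{(e^{itf}-1)/t}\,g$ avec $g=Uv$) et non $Ug$ ; et la majorante de convergence domin\'ee pour la diff\'erence avec la limite est $4|fg|^2$ plut\^ot que $|fg|^2$, ce qui ne change rien \`a l'argument.
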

L'évolution d'un système physique $S$ au cours du temps peut donc
être représentée mathématiquement par un groupe unitaire de générateur
$(iA,D(A))$ dans un hilbert $\mathcal{H}$ associè à $S$, plus précisément
si l'état initial à l'instant 0 est représenté par le vecteur $\psi_{0}\in D(A)$,
l'état à l'instant $t$ est représenté par le vecteur:

\begin{eqnarray*}
\psi(t)=U(t)\psi_{0}\end{eqnarray*}
avec 

\begin{eqnarray*}
U(t)=e^{itA}\in L_{c}(\mathcal{H}).\end{eqnarray*}
L'opérateur $H=-hA$ est appelé hamiltonien du système $S$ et représente
l'observable énergie totale de $S$, on a donc que pour tout $\psi_{0}\in D(A)$:\begin{eqnarray*}
U(t)\psi_{0}=e^{-i\frac{t}{h}H}\psi_{0}.\end{eqnarray*}

\section{Spectre du laplacien et de l'opérateur de Schrödinger}

On va faire quelques rappel sur la théorie spectrale du laplacien
et de l'opérateur de Schrödinger. Pour plus de détails on pourra consulter
mon article\textbf{ {[}90{]}} qui donne un panorama partiel et historique
sur l'étude spectrale du laplacien et de l'opérateur de Schrödinger
sur des variétés riemanniennes.

Dans un système physique constitué d'une particule se déplaçant dans
une partie ouverte $X$ de $\mathbb{R}^{n}$, l'espace de Hilbert
associé est $L^{2}(X)$, et, si la particule n'est soumise à aucune
force, l'hamiltonien est :\[
H_{0}=-\frac{\hbar^{2}}{2m}\Delta\]
où $\Delta={\displaystyle \sum_{j=1}^{n}}\frac{\partial^{2}}{\partial x_{j}^{2}}$
est le laplacien de $\mathbb{R}^{n}$, $m$ la masse de la particule,
et $\hbar$ la constante de Planck. Si au contraire la particule est
soumise à un champ de force dérivant d'un potentiel réel $V$, l'hamiltonien
est alors :

\[
H=H_{0}+V\]
$V$ désignant l'opérateur de multiplication par la fonction $V$. 

En géométrie riemannienne, l'opérateur de Laplace-Beltrami%
\footnote{On utilise ici la convention de signe des analystes pour l'opérateur
de Laplace-Beltrami. Dans la convention des géomètres $\Delta_{g}f=-\frac{1}{\sqrt{g}}{\displaystyle \sum_{j,k=1}^{n}}\frac{\partial}{\partial x_{j}}\left(\sqrt{g}g^{jk}\frac{\partial(f\circ\phi^{-1})}{\partial x_{k}}\right)$.%
} est la généralisation du laplacien de $\mathbb{R}^{n}$. Pour une
fonction $f$ de classe $\mathcal{C}^{2}$ à valeurs réelles définie
sur une variété riemannienne $(M,g)$, et pour $\phi\,:\, U\subset M\rightarrow\mathbb{R}$
une carte locale de la variété $M$, l'opérateur de Laplace-Beltrami,
ou plus simplement laplacien de $(M,g)$, appliqué à la fonction $f$
est donné par la formule locale :\[
\Delta_{g}f=\frac{1}{\sqrt{g}}{\displaystyle \sum_{j,k=1}^{n}}\frac{\partial}{\partial x_{j}}\left(\sqrt{g}g^{jk}\frac{\partial(f\circ\phi^{-1})}{\partial x_{k}}\right)\]
où $g=\det(g_{ij})$ et $g^{jk}=(g_{jk})^{-1}$. Cet opérateur joue
un très grand rôle au sein même des mathématiques: son spectre est
un invariant géométrique majeur.

\subsection{Le contexte}

Considérons une variété riemannienne $(M,g)$ complète connexe de
dimension $n\geq1.$ On lui associe l'espace de Hilbert $L^{2}(M)=L^{2}(M,d\mathcal{V}_{g})$,
$\mathcal{V}_{g}$ désignant le volume riemannien associé à la métrique
$g$. L'opérateur de Schrödinger $H$ associé à la variété $(M,g)$
de potentiel $V$, $V$ étant une fonction de $M$ dans $\mathbb{R}$,
est défini comme l'opérateur linéaire non-borné sur les fonctions
lisses à support compacte $\mathcal{C}_{c}^{\infty}(M,\mathbb{R})$
par :\textit{\begin{equation}
H=-\frac{h^{2}}{2}\Delta_{g}+V\label{eq:}\end{equation}
}$\Delta_{g}$ étant le laplacien de $(M,g)$.

\subsection{Motivation}

On s'intéresse au problème spectral : Trouver les couples non-triviaux
$(\lambda,u)$ de scalaires complexes et de fonctions tels que :\[
-\Delta_{g}u+Vu=\lambda u\]

\begin{center}
(avec $u\in L^{2}(M)$ dans le cas non compact).
\par\end{center}

Dans le cas des variétés à bord on a besoin en supplément d'imposer
des conditions au bord sur les fonctions $u$, comme par exemple les
conditions de Dirichlet : on impose $u=0$ sur le bord de $M$, ou
celles de Neumann : $\frac{\partial u}{\partial n}=0$ sur le bord
de $M$, $n$ étant la normale extérieure au bord de $M$. Dans le
cas des variétés compactes sans bord, comme par exemple la sphère,
on parle de problème fermé. Il y a deux problématiques majeures liées
au spectre du laplacien (ou de l'opérateur de Schrödinger) sur une
variété riemannienne complète $(M,g)$:
\begin{enumerate}
\item les problèmes directs : étant donnée une variété riemannienne $(M,g)$,
que dire du spectre de l'opérateur $-\Delta_{g}$ ou de celui de l'opérateur
$-\Delta_{g}+V$ ?
\item Les problèmes inverses : étant donné le spectre de l'opérateur $-\Delta_{g}$,
que dire géométriquement de la variété $(M,g)$ ?
\end{enumerate}
Avant de répondre à ces questions, examinons quelques propriétés générales
du spectre.

\subsection{Le caractère auto-adjoint}

Une des premières questions à traiter lors de l'étude spectrale d'un
opérateur linéaire est celle du caractère auto-adjoint, ou à défaut
du caractère essentiellement auto-adjoint. Rappelons que un opérateur
linéaire $H$ est essentiellement auto-adjoint si son unique fermeture
${\displaystyle \overline{H}}$ est auto-adjointe. Quel est l'intérêt
du caractère auto-adjoint? Il y a au moins deux bonnes raisons d'en
parler : 
\begin{enumerate}
\item si $H$ est auto-adjoint, on a déjà une première information spectrale
importante : le spectre de l'opérateur $H$ est une partie de $\mathbb{R}$.
\item Le caractère auto-adjoint assure en mécanique quantique l'unicité
de la solution de l'équation de Schrödinger: en effet, à partir de
l'hamiltonien auto-adjoint $H$, on peut, via le calcul fonctionnel
construire de manière unique le groupe unitaire fortement continu
$\left\{ U(t)\right\} _{t\in\mathbb{R}}$ où : $U(t)=e^{-i\frac{t}{h}H}.$
\end{enumerate}
Quels sont les principaux résultats connus sur le caractère auto-adjoint? 
\begin{itemize}
\item Dans le cas où la variété $M=\mathbb{R}^{n}$ avec sa métrique standard,
T. Carleman \textbf{{[}28{]}} en 1934 à montré que si la fonction
$V$ est localement bornée et globalement minorée, alors l'opérateur
de Schrödinger $H$ est essentiellement auto-adjoint.
\item En 1972, T. Kato \textbf{{[}84{]}} a montré que l'on pouvait remplacer
dans l'énoncé de Carleman l'hypothèse $V\in L_{loc}^{\infty}(M)$
par $V\in L_{loc}^{2}(M)$.
\item En 1994, I. Olenik \textbf{{[}102{]}}, \textbf{{[}103{]}}, \textbf{{[}104{]}}
donne un énoncé très général concernant des variétés riemanniennes
complètes connexes quelconques avec des hypothèses plus complexes
sur la fonction $V.$ Un corollaire sympathique de cet énoncé est
le suivant :\end{itemize}
\begin{thm}
Soit $(M,g)$ une variété riemannienne complète connexe de dimension
$n\geq1$, et $V$ une fonction de $L_{loc}^{\infty}(M)$ tels que
$\forall x\in M,\, V(x)\geq C$, où $C$ est une constante réelle,
alors l'opérateur \[
H=-\Delta_{g}+V\]
est essentiellement auto-adjoint.
\end{thm}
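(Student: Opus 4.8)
The plan is to prove essential self-adjointness of $H = -\Delta_g + V$ on $\mathcal{C}_c^\infty(M)$ by a classical argument: it suffices to show that the deficiency spaces are trivial, i.e. that if $u \in L^2(M)$ satisfies $(-\Delta_g + V \pm i)u = 0$ in the sense of distributions, then $u = 0$. Since $V \geq C$, replacing $V$ by $V - C + 1 \geq 1$ and shifting the spectral parameter, we may assume $V \geq 1$; it is then cleaner to show directly that $H + 1$ has dense range and that $H$ is symmetric and nonnegative, so that the Friedrichs extension argument combined with triviality of deficiency indices applies. The key point is that $-\Delta_g + V + 1$ is, formally, a positive operator bounded below by $1$, and completeness of $(M,g)$ is what lets us run the cutoff argument globally.

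First I would record that $H$ is symmetric on $\mathcal{C}_c^\infty(M)$: for $\varphi,\psi \in \mathcal{C}_c^\infty(M)$, integration by parts (Green's formula on the compactly supported region, no boundary term) gives $\langle H\varphi,\psi\rangle = \langle \varphi, H\psi\rangle$, and moreover $\langle H\varphi,\varphi\rangle = \int_M \big(|\nabla\varphi|_g^2 + V|\varphi|^2\big)\,d\mathcal{V}_g \geq C\|\varphi\|^2$. So $H$ is bounded below. Next I would take $u \in D(H^*)$ with $H^* u = -u$ (the deficiency equation at the relevant point after the shift), which by definition means $\langle u, H\varphi\rangle = -\langle u,\varphi\rangle$ for all $\varphi \in \mathcal{C}_c^\infty(M)$; elliptic regularity (the metric and $V \in L^\infty_{loc}$ suffice for $u \in H^2_{loc}$) then makes $u$ a genuine weak solution of $-\Delta_g u + (V+1)u = 0$.

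The heart of the matter, and the step I expect to be the main obstacle, is the global cutoff estimate. I would choose an exhaustion of $M$ by relatively compact sets and, using completeness, build cutoff functions $\chi_R$ equal to $1$ on a large ball, supported in a slightly larger ball, with $|\nabla \chi_R|_g$ uniformly bounded (say by $2/R$) — this is exactly where geodesic completeness enters, via the existence of a proper Lipschitz distance function. Testing the equation against $\chi_R^2 u$ (legitimate since $\chi_R^2 u$ has compact support and $u \in H^1_{loc}$) gives
\[
\int_M \chi_R^2\big(|\nabla u|_g^2 + (V+1)|u|^2\big)\,d\mathcal{V}_g = -2\int_M \chi_R u\,\langle \nabla\chi_R,\nabla u\rangle_g\,d\mathcal{V}_g.
\]
Bounding the right-hand side by Cauchy--Schwarz and Young's inequality, $2|\chi_R u \langle\nabla\chi_R,\nabla u\rangle_g| \leq \tfrac12 \chi_R^2|\nabla u|_g^2 + 2|u|^2|\nabla\chi_R|_g^2$, absorbing the gradient term on the left and using $V + 1 \geq 1$ yields
\[
\tfrac12\int_M \chi_R^2\big(|\nabla u|_g^2 + |u|^2\big)\,d\mathcal{V}_g \leq 2\int_M |u|^2|\nabla\chi_R|_g^2\,d\mathcal{V}_g \leq \frac{8}{R^2}\|u\|_{L^2(M)}^2.
\]
Letting $R \to \infty$, the right-hand side tends to $0$ while the left-hand side converges by monotone convergence to $\tfrac12\int_M(|\nabla u|_g^2 + |u|^2)$, forcing $u \equiv 0$. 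The same computation handles $H^* u = +u$ a fortiori (the zeroth-order term is only more positive), so both deficiency spaces vanish and $H$ is essentially self-adjoint. The delicate points to get right are the admissibility of the test function (requiring the a priori local $H^1$ regularity of $u$, itself from local elliptic theory with an $L^\infty_{loc}$ potential) and the construction of the good cutoffs, which is precisely the place completeness is indispensable — on an incomplete manifold the distance function need not be proper and the argument collapses.
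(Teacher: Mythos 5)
The paper itself gives no proof of this statement: it is quoted as a corollary of Oleinik's results \textbf{[102]}--\textbf{[104]} (with Carleman \textbf{[28]} and Kato \textbf{[84]} cited for the case of $\mathbb{R}^n$). Your cutoff argument is exactly the standard proof underlying those references, and its analytic core is correct: symmetry and the lower bound $\langle H\varphi,\varphi\rangle\ge C\|\varphi\|^2$ on $\mathcal{C}_c^\infty(M)$, local elliptic regularity (with $V\in L^\infty_{loc}$) to place the weak solution in $H^2_{\mathrm{loc}}$, and the completeness-based cutoffs $\chi_R$ (properness of the Lipschitz distance function via Hopf--Rinow, $|\nabla\chi_R|_g\le 2/R$), which make $\chi_R^2\bar u$ an admissible test function, let you absorb the cross term, and force $u=0$ for any $L^2$ distributional solution of $-\Delta_g u+(V+1)u=0$.

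The one step to repair is your closing sentence. First, the deficiency spaces are $\ker(H^*\mp i)$, not $\ker(H^*\pm 1)$; second, the claim that $H^*u=+u$ is handled \emph{a fortiori} is backwards: with your normalization $V\ge 1$ that equation reads $-\Delta_g u+(V-1)u=0$, whose zeroth-order coefficient is weaker, and $\ker(\overline{H}-1)$ can genuinely be nonzero (take $M$ compact and $V\equiv 1$: the constants), which of course is no obstruction to essential self-adjointness. Fortunately this extra step is superfluous: since $H\ge 1$ on $\mathcal{C}_c^\infty(M)$, the standard criterion for semibounded symmetric operators says that $H$ is essentially self-adjoint as soon as $\mathrm{Ran}(H+1)$ is dense, i.e. $\ker(H^*+1)=\{0\}$, which is precisely what your estimate establishes; alternatively, the same real-part computation applied directly to $(H^*\mp i)u=0$ closes identically. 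With that correction the argument is complete and is, in substance, the proof the cited literature provides for this corollary.
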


\subsection{Le spectre de l'opérateur est-il discret ? }

Hormis le fait que le spectre est réel, que savons nous de plus ?
En 1934 K. Friedrichs \textbf{{[}64{]}} a montré que dans le cas où
la variété $M=\mathbb{R}^{n}$ avec sa métrique standard, si la fonction
$V$ est confinante, ie ${\displaystyle \lim_{|x|\rightarrow\infty}V(x)=+\infty}$,
alors le spectre de l'opérateur de Schrödinger $H$ est constitué
d'une suite de valeurs propres de multiplicités finies s'accumulant
en $+\infty$: \[
\lambda_{1}\leq\lambda_{2}\leq\cdots\leq\lambda_{k}\leq\cdots\]
Dans le contexte d'une variété riemannienne compacte avec un laplacien
pur ($V\equiv0$) nous savons aussi que le spectre de l'opérateur
$-\Delta_{g}$ est constitué d'une suite de valeurs propres positives,
de multiplicités finies, et s'accumulant en $+\infty$\[
0\leq\lambda_{1}\leq\lambda_{2}\leq\cdots\leq\lambda_{k}\leq\cdots\]
Qu'en est-il des variétés non compactes ? Commençons par donner une
définition :
\begin{defn}
Soit $(M,g)$ une variété lisse et $V$ une fonction de $M$ dans
$\mathbb{R}$, on dira que ${\displaystyle \lim_{|x|\rightarrow\infty}V(x)=+\infty}$,
si et seulement si\[
\forall A>0,\,\exists K\subset\subset M,\,\forall x\in M\smallsetminus K,\,\left|f(x)\right|\geq A.\]

\end{defn}
Un des théorèmes concernant le spectre de l'opérateur de Schrödinger
est celui de Kondratev et Shubin \textbf{{[}86{]}}, \textbf{{[}8}7\textbf{{]}}
qui donnent un énoncé assez technique sur les variétés à géométrie
bornée; de cet énoncé, on a le corollaire bien pratique suivant :
\begin{thm}
Soit $(M,g)$ une variété riemannienne complète connexe de dimension
$n\geq1$, et $V$ une fonction de $L_{loc}^{\infty}(M)$ telle que
${\displaystyle \lim_{|x|\rightarrow\infty}V(x)=+\infty}$. Alors
le spectre de l'opérateur $H=-\Delta_{g}+V$ est constitué d'une suite
de valeurs propres de multiplicités finies s'accumulant en $+\infty$\[
\inf_{x\in M}V(x)\leq\lambda_{1}\leq\lambda_{2}\leq\cdots\leq\lambda_{k}\leq\cdots\]

\end{thm}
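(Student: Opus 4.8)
The strategy is to reduce the stated corollary to the general Kondratev--Shubin theorem (cited just before the statement) by verifying that the hypotheses of the latter are met under ours, and then to obtain the lower bound on the eigenvalues from the variational (min-max) characterization. First I would recall that a complete connected Riemannian manifold $(M,g)$ need not have bounded geometry, so one cannot quote Kondratev--Shubin verbatim; instead one localizes. The key observation is that the condition $\lim_{|x|\to\infty}V(x)=+\infty$ (in the sense of the \textbf{D�finition} above, i.e.\ $V$ exhausts $M$ outside compacta) forces the form $q(u)=\int_M\bigl(|\nabla u|^2+V|u|^2\bigr)\,d\mathcal V_g$, defined on $\mathcal C_c^\infty(M)$, to be bounded below (since $V\ge \inf_M V>-\infty$ is automatic once $V\in L^\infty_{loc}$ and $V\to+\infty$ at infinity, the infimum being attained on a compact set), hence closable; its closure defines via the Friedrichs construction a self-adjoint operator, which by the essential self-adjointness theorem stated earlier coincides with $\overline{H}$.

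Next I would prove discreteness of the spectrum by exhibiting that $(H+C_0)^{-1}$ is compact, where $C_0$ is chosen so that $H+C_0\ge 1$. The standard device: for $A>0$ pick a compact $K=K_A\subset M$ with $V\ge A$ on $M\setminus K$; then for $u\in D(\overline H)$,
\[
\langle Hu,u\rangle \;=\;\int_M|\nabla u|^2\,d\mathcal V_g+\int_M V|u|^2\,d\mathcal V_g\;\ge\;A\int_{M\setminus K}|u|^2\,d\mathcal V_g+\bigl(\inf_M V\bigr)\int_K|u|^2\,d\mathcal V_g .
\]
Combining this with the compactness of the embedding $H^1(K')\hookrightarrow L^2(K')$ for a slightly larger relatively compact $K'\supset K$ (Rellich--Kondrachov on a compact manifold with boundary, applied to cutoffs of elements of the form domain), one sees that any sequence bounded in the form norm has a subsequence converging in $L^2$: the $L^2$-mass on $M\setminus K$ is uniformly $\le (\text{bound})/A$, and on $K$ one extracts a convergent subsequence. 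Letting $A\to\infty$ through a diagonal argument gives precompactness in $L^2$ of the unit ball of $D(H)$ equipped with the graph norm, i.e.\ $(H+C_0)^{-1}$ is a compact self-adjoint positive operator. Its spectrum is therefore a sequence of eigenvalues accumulating only at $0$, which translates into a spectrum of $H$ consisting of eigenvalues of finite multiplicity accumulating only at $+\infty$.

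Finally, the lower bound $\inf_{x\in M}V(x)\le\lambda_1$ is immediate from the min-max principle: for any nonzero $u\in\mathcal C_c^\infty(M)$,
\[
\frac{\langle Hu,u\rangle}{\|u\|^2}\;=\;\frac{\int_M|\nabla u|^2\,d\mathcal V_g}{\int_M|u|^2\,d\mathcal V_g}+\frac{\int_M V|u|^2\,d\mathcal V_g}{\int_M|u|^2\,d\mathcal V_g}\;\ge\;\inf_{x\in M}V(x),
\]
since $\int|\nabla u|^2\ge 0$; taking the infimum over $u$ gives $\lambda_1\ge\inf_M V$, and ordering the eigenvalues yields the full chain $\inf_M V\le\lambda_1\le\lambda_2\le\cdots$. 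The main obstacle is the discreteness step: one must handle the fact that $(M,g)$ is merely complete, with no curvature or injectivity-radius control, so the Rellich argument has to be run purely locally on relatively compact pieces and then patched with the uniform decay of $L^2$-mass at infinity supplied by $V\to+\infty$; getting the cutoff errors (the terms $|\nabla\chi|^2$) to be controlled requires choosing the cutoffs supported in a fixed relatively compact neighborhood of $K_A$, which is harmless since $K_A$ is compact. Everything else is routine once the Friedrichs/essential self-adjointness identification is in place.
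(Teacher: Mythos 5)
Your proof is essentially correct, but it takes a genuinely different route from the paper: the paper offers no proof of this statement at all, presenting it as a ready-to-use corollary of the technical Kondrat'ev--Shubin theorem on manifolds of bounded geometry (\textbf{[86]}, \textbf{[87]}), whereas you give a direct, self-contained argument. Your opening caveat is well taken: a complete connected manifold need not have bounded geometry, so quoting that theorem verbatim does not immediately give the statement, and a direct proof (or an appeal to a decomposition principle of Persson type, which bounds $\inf\sigma_{ess}$ from below by $\liminf_{x\to\infty}V$) is exactly what fills the gap. Your chain of steps is sound: $V$ is bounded below because it is essentially bounded on a compact set and large outside it; the Friedrichs extension of the form $q(u)=\int_M\bigl(|\nabla u|^2+V|u|^2\bigr)d\mathcal{V}_g$ on $\mathcal{C}_c^{\infty}(M)$ coincides with $\overline{H}$ by the essential self-adjointness theorem quoted earlier in the paper; the bound $\int_M V|u_n|^2\leq C$ forces the $L^2$-mass outside $K_A$ to be $O(1/A)$ uniformly, while Rellich--Kondrachov on a relatively compact neighbourhood with smooth boundary (such neighbourhoods exist from any proper smooth exhaustion function, with no curvature or injectivity-radius control needed) plus a diagonal extraction gives compactness of the embedding of the form domain into $L^2(M)$, hence compactness of $(\overline{H}+C_0)^{-1}$ by factoring the resolvent through the form domain, hence a spectrum consisting of finite-multiplicity eigenvalues accumulating only at $+\infty$; and the Rayleigh quotient on the form core gives $\lambda_1\geq\operatorname*{ess\,inf}_M V\geq\inf_M V$. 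What your approach buys is generality (no bounded-geometry hypothesis) and self-containedness; what the paper's citation buys is brevity and the connection to the finer Kondrat'ev--Shubin/Molchanov-type criteria, which characterize discreteness of the spectrum under hypotheses strictly weaker than $V\to+\infty$. If you write this up, state explicitly that what you prove is compactness of the form-domain embedding (rather than of the ball of $D(\overline{H})$ in graph norm), and note that the cutoff discussion can be dispensed with entirely by simply restricting to the relatively compact piece, as your own estimate already shows.
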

Le théorème de Courant de 1953 \textbf{{[}50{]}}, assure en particulier
que la première valeur propre $\lambda_{1}$ de l'opérateur $H$ est
simple :\textit{\[
\inf_{x\in M}V(x)\leq\lambda_{1}<\lambda_{2}\leq\cdots\leq\lambda_{k}\leq\cdots\]
}

\subsection{Un aperçu sur les problèmes directs }

L'objectif est, à géométrie fixée, de pouvoir calculer, ou à défaut
de donner des propriétés sur le spectre de l'opérateur $-\Delta_{g}$
ou de celui de l'opérateur $-\Delta_{g}+V$. On va d'abord parler
de résultats exacts, puis de méthodes qualitatives.

\subsubsection*{Calcul explicite de spectre}

Il n'y a bien sur pas de méthodes générales pour calculer un spectre
d'opérateur linéaire; même dans le cas de Schrödinger sur une variété
raisonnable, le calcul est souvent difficile, et finalement on dispose
de peu d'exemples ou l'on peut expliciter complètement le spectre.
Voici tout de même quelques un exemple de calcul exact : l'oscillateur
harmonique, ou opérateur d'Hermite comme on le nomme en analyse harmonique.
C'est l'un des rares exemples d'opérateur de Schrödinger sur une variété
non compacte pour lequel on arrive à calculer explicitement son spectre.
L'oscillateur harmonique joue un rôle très important dans l'étude
des systèmes intégrables en classification symplectique : il sert
en effet de modèle de référence des équilibres stables de type elliptique;
pour plus de détails, on peut consulter le livre de Vu Ngoc \textbf{{[}San4{]}}.
Ici on prend $M=\mathbb{R}$ et :\textit{\[
H=-\frac{1}{2}\frac{d^{2}}{dx^{2}}+\frac{x^{2}}{2}.\]
}Les propriétés spectrales de l'opérateur $H$ sont très remarquables
: on arrive à calculer son spectre et les vecteurs propres associés
de manière explicite. Ces calculs, d'un point de vu très formel, se
trouvent dans n'importe quel bon livre de mécanique quantique. Pour
des démonstrations précises, on conseille par exemple le livre de
M.E. Taylor \textbf{{[}115{]}}. Le résultat est alors le suivant,
le spectre de l'opérateur $H$ est\textit{:\[
\sigma(H)=\left\{ n+\frac{1}{2},\, n\in\mathbb{N}\right\} \]
}avec comme vecteurs propres associés la base hilbertienne de $L^{2}(\mathbb{R})$
constituée des fonctions d'Hermite :\textit{\begin{eqnarray*}
e_{n}(x)=(2^{n}n!\sqrt{\pi})^{-\frac{1}{2}}e^{-\frac{x^{2}}{2}}H_{n}(x)\;\,\textrm{où}\,\; H_{n}(x)=(-1)^{n}e^{x^{2}}\frac{d^{n}}{dx^{n}}(e^{-x^{2}}) & .\end{eqnarray*}
}

\subsubsection*{Etude qualitative spectrale en bas du spectre}

Dans nombre de cas on ne sait pas calculer un spectre, on essaye alors
de le décrire de manière qualitative. Il y a disons deux sous thèmes
:
\begin{itemize}
\item le premier concerne le bas du spectre: on s'intéresse aux plus petites
valeurs propres de l'opérateur.
\item Le second est l'étude de l'asymptotique des grandes valeurs propres:
analyse semi-classique.
\end{itemize}
Donnons quelques exemples de résultats concernant le bas du spectre.
Commençons par des résultats de comparaison des premières valeurs
propres.
\begin{thm}
\textbf{(Théorème de Faber-Krahn, 1953).} Soit $M$ une partie bornée
de $\mathbb{R}^{n}$. En notant par $\lambda_{1}(M)$ la première
valeur propre de l'opérateur $-\Delta$ avec conditions de Dirichlet,
on a :\[
\lambda_{1}(M)\geq\lambda_{1}(B_{M})\]
$B_{M}$ désignant la boule euclidienne de volume égal à $\textrm{Vol}(M)$.
Et on a égalité si et seulement si $M$est isométrique à $B_{M}$.
\end{thm}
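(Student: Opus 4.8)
The plan is to prove the Faber–Krahn inequality via the technique of \emph{symétrisation de Schwarz} (spherically symmetric decreasing rearrangement). First I would fix a domain $M\subset\mathbb{R}^n$ and let $u$ be a first Dirichlet eigenfunction of $-\Delta$ on $M$, normalized by $\|u\|_{L^2}=1$; by the Courant–Fischer variational principle one has
\[
\lambda_1(M)=\inf\left\{\int_M|\nabla v|^2\,dx \;:\; v\in H_0^1(M),\ \|v\|_{L^2(M)}=1\right\},
\]
and this infimum is attained at $u$, which one may take nonnegative. The idea is to build a test function on the ball $B_M$ out of $u$ and plug it into the analogous variational characterization of $\lambda_1(B_M)$.

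The key tool is the decreasing rearrangement $u^*$ of $u$: this is the unique radially symmetric, radially nonincreasing function on $B_M$ whose superlevel sets $\{u^*>t\}$ are balls with the same volume as $\{u>t\}\subset M$. Two classical facts do all the work. The first is \emph{equimeasurability}: $u$ and $u^*$ have the same distribution function, hence $\|u^*\|_{L^2(B_M)}=\|u\|_{L^2(M)}=1$, so $u^*$ is an admissible competitor. The second, and the heart of the matter, is the \emph{Pólya–Szegő inequality}
\[
\int_{B_M}|\nabla u^*|^2\,dx \;\le\; \int_M|\nabla u|^2\,dx.
\]
Granting these, one concludes immediately:
\[
\lambda_1(B_M)\;\le\;\int_{B_M}|\nabla u^*|^2\,dx\;\le\;\int_M|\nabla u|^2\,dx\;=\;\lambda_1(M).
\]

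The main obstacle is establishing the Pólya–Szegő inequality, and I expect this to be where the real work lies. The standard route uses the coarea formula to write $\int|\nabla u|^2$ and $\int u^2$ as integrals over level sets $t$ of quantities involving the perimeter $\operatorname{Per}(\{u>t\})$ and $\frac{d}{dt}\operatorname{Vol}(\{u>t\})$; then the \emph{isoperimetric inequality} in $\mathbb{R}^n$ — the ball minimizes perimeter for given volume — shows that at each level the rearranged function spends no more Dirichlet energy than the original. Concretely, by Cauchy–Schwarz on a level set $\{u=t\}$,
\[
\left(-\frac{d}{dt}\operatorname{Vol}(\{u>t\})\right)\cdot\left(\int_{\{u=t\}}|\nabla u|\,d\mathcal{H}^{n-1}\right)\;\ge\;\operatorname{Per}(\{u>t\})^2,
\]
and replacing $\operatorname{Per}(\{u>t\})$ by the isoperimetrically optimal value (that of the ball of the same volume) yields exactly the pointwise-in-$t$ inequality that integrates to Pólya–Szegő. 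The fact that one is indeed reduced to the isoperimetric inequality is the conceptual core; the measure-theoretic bookkeeping (Sard's theorem to handle critical values, absolute continuity of the distribution function) is routine but needs care.

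Finally, for the equality case: equality in the chain forces equality in the isoperimetric inequality at almost every level $t$, hence every superlevel set $\{u>t\}$ is (up to a null set) a ball. Since these balls are nested and exhaust $M$ as $t\to 0$, the domain $M$ must itself be a ball, i.e.\ $M$ is isometric to $B_M$ (translations being the only freedom). Conversely if $M=B_M$ the inequality is trivially an equality. I would also remark that this argument is purely Euclidean and uses only the variational principle recalled in the previous section together with the classical isoperimetric inequality, which I would cite rather than prove.
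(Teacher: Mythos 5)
The paper itself gives no proof of this statement: it appears in the survey section on lower bounds for the spectrum as a classical result attributed to Faber and Krahn (whose original papers are listed in the bibliography), so there is no internal argument to compare yours against; your proposal must be judged on its own. On the inequality part it is correct and is the classical route: the variational characterization of $\lambda_{1}$, equimeasurability of the first eigenfunction $u$ and its Schwarz rearrangement $u^{*}$, and the P\'olya--Szeg\H{o} inequality, itself obtained from the coarea formula, the Cauchy--Schwarz estimate on level sets that you display, and the Euclidean isoperimetric inequality, yield $\lambda_{1}(B_{M})\leq\int_{B_{M}}|\nabla u^{*}|^{2}\leq\int_{M}|\nabla u|^{2}=\lambda_{1}(M)$ exactly as you write, with the measure-theoretic caveats (critical values, absolute continuity of $t\mapsto\textrm{Vol}(\{u>t\})$) correctly flagged as routine-but-delicate. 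The only place where your sketch genuinely understates the difficulty is the equality case: equality in P\'olya--Szeg\H{o} does not by itself force the superlevel sets to be balls, because the level-by-level argument can degenerate on plateaus where $\nabla u$ vanishes (this is the Brothers--Ziemer subtlety). Here the gap is repairable, and you should say how: equality in the chain forces $u^{*}$ to be a first Dirichlet eigenfunction of $B_{M}$, hence radial, real-analytic, with strictly decreasing radial profile on $(0,R)$, which excludes plateaus; then equality in the isoperimetric inequality for almost every level, together with the nestedness argument you invoke (an increasing union of nested balls with convergent radii is a ball), gives that $M$ coincides with a ball up to a negligible set, which is the precise content of the rigidity statement. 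With that supplement your argument is a complete and standard proof; as it stands, the equality case is a sketch of a sketch rather than a proof.
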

Dans le même style, on a aussi la version avec conditions de Neumann
où l'inégalité est dans l'autre sens:
\begin{thm}
\textbf{(Théorème de Szegö-Weinberger, 1954).} Soit $M$ une partie
bornée de $\mathbb{R}^{n}$. En notant par $\mu_{1}(M)$ la première
valeur propre de de l'opérateur $-\Delta$ avec conditions de Neumann,
on a :\[
\mu_{1}(M)\leq\mu_{1}(B_{M})\]
$B_{M}$ désignant la boule euclidienne de volume égal à $\textrm{Vol}(M)$.
Et on a égalité si et seulement si $M$est isométrique à $B_{M}$.
\end{thm}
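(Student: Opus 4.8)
The plan is to use the variational characterization of $\mu_1$ together with a clever choice of test function built from the first Neumann eigenfunction of the ball $B_M$. Recall that for a bounded domain $M$ one has
\[
\mu_1(M)=\inf\left\{\frac{\int_M |\nabla u|^2}{\int_M u^2}\ :\ u\in H^1(M),\ \int_M u=0\right\}.
\]
The first non-trivial Neumann eigenfunction of a ball $B_M=B(0,R)$ (normalised so that $\mathrm{Vol}(B_M)=\mathrm{Vol}(M)$) is known explicitly: in each coordinate direction it has the form $g(|x|)\,\frac{x_i}{|x|}$, where $g$ is a fixed radial profile (a Bessel-type function) satisfying $g(0)=0$, $g$ increasing up to $r=R$ and $g'(R)=0$. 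First I would recall these facts and set $\rho(r):=g(r)$ extended by the constant $g(R)$ for $r\ge R$, so that $\rho$ is bounded, Lipschitz, non-decreasing, and the functions $w_i(x):=\rho(|x|)\,x_i/|x|$ are admissible competitors on any translate of $M$.

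The second step handles the mean-zero constraint. For each $i$ the function $w_i$ need not have zero average over $M$, but a standard topological argument (a Brouwer fixed-point / degree argument applied to the map sending a center point $c$ to the vector of averages $\int_{M} w_i(x-c)\,dx$) produces a translation of the profile so that all $n$ functions $x\mapsto \rho(|x-c|)\frac{x_i-c_i}{|x-c|}$ simultaneously have zero mean over $M$. After translating $M$ we may assume $c=0$. Plugging these $n$ test functions into the Rayleigh quotient and summing over $i$ gives
\[
\mu_1(M)\,\sum_{i=1}^n \int_M w_i^2 \ \le\ \sum_{i=1}^n \int_M |\nabla w_i|^2 .
\]
A direct computation yields $\sum_i w_i^2=\rho(|x|)^2$ and $\sum_i|\nabla w_i|^2=\rho'(|x|)^2+(n-1)\rho(|x|)^2/|x|^2=:B(|x|)$, so the inequality becomes $\mu_1(M)\int_M \rho(|x|)^2\,dx\le \int_M B(|x|)\,dx$.

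The heart of the proof — and the main obstacle — is comparing these two integrals over $M$ with the corresponding integrals over $B_M$, where by construction equality $\mu_1(B_M)\int_{B_M}\rho^2=\int_{B_M}B$ holds. The key monotonicity facts are: $\rho^2$ is non-decreasing in $r$ (so it is "larger near infinity"), while one checks, using the ODE satisfied by $g$ on $(0,R)$ and the explicit constant extension beyond $R$, that $B(r)$ is non-increasing in $r$ and that $B(r)\le \mu_1(B_M)\rho(r)^2$ for $r\ge R$. Since $M$ and $B_M$ have equal volume, $M\setminus B_M$ and $B_M\setminus M$ have equal volume; on $M\setminus B_M$ we have $r\ge R$ and on $B_M\setminus M$ we have $r\le R$. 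Hence replacing $B_M$ by $M$ can only decrease $\int B$ (we trade large values near the center for small values far out) and can only increase $\int \rho^2$. This gives
\[
\mu_1(M)\int_M \rho^2 \ \le\ \int_M B \ \le\ \int_{B_M} B \ =\ \mu_1(B_M)\int_{B_M}\rho^2\ \le\ \mu_1(B_M)\int_M \rho^2,
\]
and dividing by $\int_M\rho^2>0$ yields $\mu_1(M)\le\mu_1(B_M)$. For the equality case I would trace back through these inequalities: equality forces $\int_M B=\int_{B_M}B$ and $\int_M\rho^2=\int_{B_M}\rho^2$, which by the strict monotonicity of $B$ and $\rho^2$ on $(0,R)$ forces $|M\triangle B_M|=0$, i.e. $M$ is a ball of the same volume, hence isometric to $B_M$; one must also check that the Brouwer-centering step does not obstruct this, which it does not since the centering is then automatic. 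The delicate points to get right are the sign of $B'(r)$ from the Bessel ODE and the inequality $B(r)\le\mu_1(B_M)\rho(r)^2$ for $r>R$; everything else is bookkeeping with the equal-volume splitting.
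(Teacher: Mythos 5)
The paper never proves this statement: it is quoted as a classical result in the middle of a survey, with no argument given, so there is no \og official \fg{} proof to compare against. Your proposal is the classical argument of Weinberger (the 1954 result of Szeg\"o{} is the two-dimensional conformal-map proof; the test-function argument you describe is Weinberger's 1956 proof, valid in all dimensions), and it is essentially correct: the computations $\sum_i w_i^2=\rho(r)^2$ and $\sum_i|\nabla w_i|^2=\rho'(r)^2+(n-1)\rho(r)^2/r^2$ are right, the Brouwer fixed-point centering is the standard way to enforce the mean-zero constraint for all $n$ test functions simultaneously, and the chain $\mu_1(M)\int_M\rho^2\le\int_M B\le\int_{B_M}B=\mu_1(B_M)\int_{B_M}\rho^2\le\mu_1(B_M)\int_M\rho^2$ is exactly the right bookkeeping, using only that $B$ is radially non-increasing, $\rho^2$ radially non-decreasing, and $\mathrm{Vol}(M)=\mathrm{Vol}(B_M)$. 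Two remarks: the monotonicity of $B$ on $(0,R)$, which you correctly identify as the delicate point, does follow from the Bessel-type ODE for $g$ (Weinberger's lemma), and for $r\ge R$ one simply has $B(r)=(n-1)g(R)^2/r^2$, decreasing; on the other hand your auxiliary claim $B(r)\le\mu_1(B_M)\rho(r)^2$ for $r\ge R$ is never used in the chain and can be dropped. You should also state explicitly that $g$ is increasing on $(0,R)$ (true for the first eigenfunction profile), since that is what makes $\rho^2$ non-decreasing. For the equality case your argument yields $|M\,\triangle\, B_M|=0$ after the translation, which for the regular domains implicitly assumed in the statement does give that $M$ is a ball, hence isometric to $B_M$; a fully rigorous treatment would spell out this last measure-zero-to-geometry step.
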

Un autre type de résultat classique concerne les constantes de Cheeger
: soit $(M,g)$ une variété riemannienne connexe et compacte de dimension
$n\geq1.$ Pour toute partie bornée régulière $D$ de $M$, on considère
la quantité\[
h(D,g)=\frac{\textrm{Vol}(\partial D,g)}{\textrm{Vol}(D,g)}\]
où $\textrm{Vol}(\partial D,g)$ est le volume $n-1$ dimensionnel.
On définit ensuite la constante de Cheeger par \[
h(M,g)=\inf_{D\in X}h(D,g)\]
$X$ étant l'ensemble de tous les domaines de $M$ de volumes majorés
par $\frac{\textrm{Vol}(M,g)}{2}$. Alors un des résultats de Cheeger
est que la première valeur propre non nulle du Laplacien est minorée
par $\frac{h(M,g)^{2}}{4}$ \textbf{{[}19{]}}.\textbf{ }

Pour finir, donnons un autre résultat intéressant qui concerne la
multiplicité des valeurs propres en fonction de la topologie. Pour
cela plaçons nous un instant dans le cas des surfaces : si $(M,g)$
est une surface complète connexe, et \textit{\[
H=-\Delta_{g}+V\]
}avec $V\in\mathcal{C}^{\infty}(M,\mathbb{R})$ tels que ${\displaystyle \lim_{|x|\rightarrow\infty}V(x)=+\infty}$.
En notant (cf. Théorème 2) par \[
\lambda_{1}<\lambda_{2}\leq\cdots\leq\lambda_{k}\leq\cdots\]
le spectre de l'opérateur $H$ et par $m_{k}$ la multiplicité de
la $k$-ème valeur propre $\lambda_{k}$, nous avons le résultat dû
à S. Y. Cheng \textbf{{[}32{]}} et amélioré par G. Besson \textbf{{[}21{]}}
, Y. Colin De Verdière \textbf{{[}38{]}}, N. Nadirashvili \textbf{{[}101{]}}
et B. Sévennec \textbf{{[}113{]}} :
\begin{thm}
Sous les hypothèses précédentes nous avons :

- Si $X=\mathbb{S}^{2}$ ou $\mathbb{R}^{2}$, alors pour tout $k\geq3,\, m_{k}\leq2k-3$.

- Si $X=\mathbb{P}^{2}(\mathbb{R})$ ou $K_{2}$  (la bouteille de
Klein), alors pour tout $k\geq1,\, m_{k}\leq2k+1$.

- Si $X=\mathrm{T}^{2}$, alors pour tout $k\geq1,\, m_{k}\leq2k+2$.

- En notant par $\chi(M)$ la caractéristique d'Euler-Poincaré, si
$\chi(M)<0$, alors pour tout $k\geq1,\, m_{k}\leq2k-2\chi(M)$.
\end{thm}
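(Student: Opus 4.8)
L'approche classique (Cheng) combine le th\'eor\`eme des domaines nodaux de Courant, la structure locale des ensembles nodaux et un comptage de jets. On peut supposer $X$ compacte: si $X$ est non compacte, l'hypoth\`ese ${\displaystyle \lim_{|x|\to\infty}V(x)=+\infty}$ entra\^ine, par un argument de type principe du maximum \`a l'infini, que l'ensemble nodal de toute fonction propre est compact, de sorte qu'on peut boucher les bouts de $X$ sans rien changer au comptage ci-dessous; c'est ainsi que le cas $\mathbb{R}^2$ se ram\`ene \`a $\mathbb{S}^2$. Soit donc $X$ compacte, $H=-\Delta_g+V$, $E=\ker(H-\lambda_k)$, $m_k=\dim E$ et $u\in E\setminus\{0\}$. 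Par le principe de prolongement unique d'Aronszajn, $u$ ne s'annule nulle part \`a un ordre infini; en coordonn\'ees normales en un z\'ero $x$ d'ordre $\ell$, le terme dominant du d\'eveloppement de Taylor de $u$ est un polyn\^ome harmonique homog\`ene de degr\'e $\ell$ en deux variables, dont le lieu des z\'eros est une r\'eunion de $\ell$ droites concourantes; par un r\'esultat de Bers et Cheng, $Z_u:=u^{-1}(0)$ est, au voisinage de $x$, $C^1$-diff\'eomorphe \`a cette configuration, de sorte que $x$ est un sommet de valence $2\ell$ du {}``graphe'' $Z_u$. Les points singuliers (o\`u $u$ et $du$ s'annulent) sont en nombre fini; ailleurs $Z_u$ est une courbe lisse.

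\textbf{Majoration de l'ordre d'annulation par Euler.} On regarde $Z_u$ comme un graphe plong\'e dans $X$: ses sommets sont les points singuliers $p_1,\dots,p_s$ (d'ordres $\ell_1,\dots,\ell_s$), auxquels on ajoute un sommet auxiliaire sur chacune des $c$ composantes de $Z_u$ qui sont des courbes ferm\'ees lisses sans point singulier; les ar\^etes sont les arcs restants, et les faces sont les $r$ domaines nodaux $\Omega_1,\dots,\Omega_r$ de $u$. En comptant les extr\'emit\'es des ar\^etes on obtient $s+c$ sommets et $\sum_j\ell_j+c$ ar\^etes. D\`es que $k\ge 2$, la fonction $u$ est orthogonale \`a la premi\`ere fonction propre $u_1>0$: elle change donc de signe, chaque $\Omega_i$ est alors un ouvert \emph{strict} de $X$, donc non compact, d'o\`u $\chi(\Omega_i)\le 1$. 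L'additivit\'e de la caract\'eristique d'Euler s'\'ecrit
\[ \chi(X)=(s+c)-\Bigl(\sum_{j=1}^{s}\ell_j+c\Bigr)+\sum_{i=1}^{r}\chi(\Omega_i)\;\le\;s-\sum_{j=1}^{s}\ell_j+r, \]
c'est-\`a-dire $\sum_{j=1}^{s}(\ell_j-1)\le r-\chi(X)$. Comme le th\'eor\`eme de Courant donne $r\le k$, si $u$ s'annule \`a l'ordre $L$ en un point $p$ (qui est alors l'un des $p_j$) il vient $L-1\le\sum_j(\ell_j-1)\le k-\chi(X)$, donc $L\le k-\chi(X)+1$. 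C'est l'unique endroit o\`u la restriction $k\ge 3$ dans le cas $\mathbb{S}^2$ intervient, les valeurs $k\le 2$ y \'etant triviales ou de multiplicit\'e $1$.

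\textbf{Du jet \`a la multiplicit\'e.} Fixons $p\in X$ et filtrons $E$ par l'ordre d'annulation en $p$: $E=E^{(0)}\supseteq E^{(1)}\supseteq\cdots$ avec $E^{(\ell)}=\{v\in E:\mathrm{ord}_p(v)\ge\ell\}$. Par prolongement unique, $E^{(\ell)}=\{0\}$ d\`es que $\ell>L_p:=\max\{\mathrm{ord}_p(v):v\in E\setminus\{0\}\}$. L'application {}``terme dominant'' plonge $E^{(\ell)}/E^{(\ell+1)}$ dans l'espace des polyn\^omes harmoniques homog\`enes de degr\'e $\ell$ en deux variables, espace de dimension $\le 2$ (et $\le 1$ si $\ell=0$). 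Donc $m_k=\sum_{\ell\ge 0}\dim\bigl(E^{(\ell)}/E^{(\ell+1)}\bigr)\le 1+2L_p$. En choisissant $p$ et $v\in E$ qui r\'ealisent $L_p$, les deux \'etapes pr\'ec\'edentes fournissent
\[ m_k\;\le\;1+2\bigl(k-\chi(X)+1\bigr)\;=\;2k-2\chi(X)+3. \]

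\textbf{Les constantes optimales, et l'obstacle.} Cette borne a d\'ej\`a la bonne croissance $2k+O(1)$; elle co\"incide exactement avec l'\'enonc\'e dans le cas $\mathbb{P}^2(\mathbb{R})$ ($\chi=1$), mais elle perd respectivement $2$, $1$, $2$ et $3$ unit\'es pour $\mathbb{S}^2$, $\mathrm{T}^2$, $K_2$ et le cas $\chi<0$. Obtenir les constantes pr\'ecises demande des raffinements propres \`a chaque g\'eom\'etrie: choisir le point base $p$ sur l'ensemble nodal d'une fonction propre pour \'economiser le {}``$+1$'' de l'\'etape pr\'ec\'edente; dans le cas orientable, exploiter que les termes dominants s'organisent localement en une structure holomorphe, si bien que $\dim\bigl(E^{(L_p)}/E^{(L_p+1)}\bigr)=2$ force un cercle entier de fonctions propres ayant chacune un croisement $L_p$-uple en $p$, donc de nombreux domaines nodaux; enfin tenir compte de $H_1(X)$ (non-orientabilit\'e, tore) dans le bilan d'Euler. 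C'est pr\'ecis\'ement ce travail d\'elicat, men\'e cas par cas, qui constitue la v\'eritable difficult\'e et qui fait l'objet des articles de Cheng, Besson, Colin de Verdi\`ere, Nadirashvili et S\'evennec cit\'es plus haut; la majoration {}``grossi\`ere'' ci-dessus n'en est que la partie \'el\'ementaire.
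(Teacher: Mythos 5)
Le texte que vous commentez ne contient aucune d\'emonstration de ce th\'eor\`eme : c'est un \'enonc\'e de synth\`ese, renvoy\'e aux articles de Cheng, Besson, Colin de Verdi\`ere, Nadirashvili et S\'evennec. Votre esquisse suit bien le squelette classique de Cheng (th\'eor\`eme de Courant, structure locale des lignes nodales \`a la Bers, bilan d'Euler, comptage de jets), et cette partie-l\`a est globalement correcte. Mais, comme vous le reconnaissez vous-m\^eme dans le dernier paragraphe, elle ne d\'emontre pas l'\'enonc\'e propos\'e : elle donne seulement $m_{k}\leq2k-2\chi(X)+3$, qui ne co\"incide avec la borne annonc\'ee que pour $\mathbb{P}^{2}(\mathbb{R})$ et reste strictement plus faible dans les quatre autres cas ($2k-1$ au lieu de $2k-3$ pour $\mathbb{S}^{2}$, $2k+3$ au lieu de $2k+2$ pour $\mathrm{T}^{2}$, $2k+3$ au lieu de $2k+1$ pour $K_{2}$, $2k-2\chi+3$ au lieu de $2k-2\chi$ si $\chi<0$). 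Or les constantes pr\'ecises sont exactement le contenu du th\'eor\`eme : le choix du point base sur l'ensemble nodal, l'argument de rotation/holomorphie de Besson--Nadirashvili, l'argument topologique de S\'evennec ne peuvent pas \^etre rel\'egu\'es \`a une remarque finale renvoyant \`a la litt\'erature. En l'\'etat, c'est une preuve honn\^ete d'une borne lin\'eaire plus faible, pas une preuve de l'\'enonc\'e.

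Un point pr\'ecis est de plus erron\'e : la r\'eduction de $\mathbb{R}^{2}$ \`a $\mathbb{S}^{2}$ via l'affirmation que \og l'ensemble nodal de toute fonction propre est compact \fg{} est fausse. Pour l'oscillateur harmonique en dimension $2$ (potentiel confinant $V(x,y)=(x^{2}+y^{2})/2$), la fonction propre $u(x,y)=x\, e^{-(x^{2}+y^{2})/2}$ a pour ensemble nodal l'axe $\{x=0\}$ tout entier, qui n'est pas compact. Ce que donne le confinement, c'est qu'aucun domaine nodal n'est contenu dans la r\'egion classiquement interdite $\{V>\lambda\}$ (int\'egration par parties : $\int|\nabla u|^{2}+(V-\lambda)u^{2}=0$ y forcerait $u=0$), et le cas non compact se traite en compactifiant le graphe nodal dans $\mathbb{S}^{2}$ en ajoutant un sommet \`a l'infini o\`u aboutissent les (finitement nombreux) arcs nodaux non born\'es, ce qui modifie le bilan d'Euler et demande un soin sp\'ecifique (c'est fait chez Colin de Verdi\`ere et Nadirashvili). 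Telle quelle, votre phrase de r\'eduction ne tient pas.
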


\subsubsection*{Etude qualitative spectrale en haut de spectre}

L'exemple de base est la formule asymptotique de Weyl de 1911,\textbf{
}{[}\textbf{19{]}}. Pour le laplacien dans un domaine rectangulaire
$\Omega$ de $\mathbb{R}^{2}$ avec des conditions de Dirichlet aux
bords, le physicien P. Debye conjectura que le nombre de valeurs propres
$\mathcal{N}(\lambda)$ inférieure à un réel positif $\lambda$, vérifie
l'équivalence, pour $\lambda\rightarrow+\infty$\textit{\[
{\displaystyle \mathcal{N}(\lambda)\sim}\frac{\textrm{Vol}(\Omega)}{4\pi}\lambda\]
}où$\textrm{Vol}(\Omega)$ est l'aire du rectangle $\Omega$. En 1911,
H. Weyl démontra cette conjecture.
\begin{thm}
Soit $(M,g)$ une variété riemannienne compacte connexe de dimension
$n$, si on note par $\lambda_{1}<\lambda_{2}\leq\cdots\leq\lambda_{k}\leq\cdots$
les valeurs propres de l'opérateur $-\Delta_{g}$ sur $M$, on a l'équivalent
pour $\lambda\rightarrow+\infty$\[
\mathrm{Card}\left(\left\{ k\in\mathbb{N},\,\lambda_{k}\leq\lambda\right\} \right){\displaystyle \sim}\frac{B_{n}\textrm{Vol}(M,g)}{(2\pi)^{n}}\lambda^{\frac{n}{2}}\]
où $B_{n}=\frac{\pi^{\frac{n}{2}}}{\Gamma\left(\frac{n}{2}+1\right)}$
est le volume de la boule unité de $\mathbb{R}^{n}$.
\end{thm}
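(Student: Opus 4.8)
The plan is to establish the asymptotics $\mathcal{N}(\lambda) := \mathrm{Card}\{k : \lambda_k \leq \lambda\} \sim \frac{B_n \mathrm{Vol}(M,g)}{(2\pi)^n}\lambda^{n/2}$ via the heat kernel and a Tauberian argument, which is the cleanest route on a compact manifold without boundary. First I would introduce the heat semigroup $e^{t\Delta_g}$, which by the spectral theory recalled above (the multiplicative spectral theorem and bounded functional calculus, applied to $-\Delta_g$ whose spectrum is the discrete sequence $0 \leq \lambda_1 \leq \lambda_2 \leq \cdots$) has a smooth integral kernel $p(t,x,y)$ on $(0,\infty)\times M\times M$, and whose trace is $Z(t) := \sum_{k\geq 1} e^{-t\lambda_k} = \int_M p(t,x,x)\, d\mathcal{V}_g(x)$. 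The compactness of $M$ guarantees that this sum converges for every $t>0$ and that the trace is finite.

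The analytic heart is the short-time asymptotics of the heat kernel on the diagonal. The key step is the parametrix construction: one builds an approximate solution of the heat equation near the diagonal by solving the transport equations recursively, using normal coordinates centered at $x$ and the fact that in such coordinates the metric is Euclidean to leading order. This yields the local expansion $p(t,x,x) = (4\pi t)^{-n/2}\bigl(1 + a_1(x)t + a_2(x)t^2 + \cdots\bigr)$ uniformly in $x$ (here one uses compactness again to get uniformity), and integrating over $M$ gives
\[
Z(t) = \frac{\mathrm{Vol}(M,g)}{(4\pi t)^{n/2}} + O\bigl(t^{1-n/2}\bigr), \qquad t \to 0^+.
\]
I would then invoke Karamata's Tauberian theorem: since $Z(t) = \int_0^\infty e^{-t\lambda}\, dN(\lambda)$ where $N(\lambda) = \mathcal{N}(\lambda)$ is nondecreasing, the behavior $Z(t) \sim C t^{-n/2}$ as $t\to 0^+$ forces $\mathcal{N}(\lambda) \sim \frac{C}{\Gamma(n/2+1)}\lambda^{n/2}$ as $\lambda \to +\infty$. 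Substituting $C = \mathrm{Vol}(M,g)/(4\pi)^{n/2}$ and using $B_n = \pi^{n/2}/\Gamma(n/2+1)$, one checks that $\frac{C}{\Gamma(n/2+1)} = \frac{B_n\,\mathrm{Vol}(M,g)}{(2\pi)^n}$, which is exactly the claimed constant.

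The main obstacle is the uniform heat-kernel parametrix construction: one must show that the formal series solving the transport equations can be truncated to give a genuine approximate kernel whose error, after applying Duhamel's principle, is controlled uniformly on all of $M$, and that the true kernel indeed has the stated diagonal expansion. This requires care with the cut-off near the diagonal and estimates on the remainder in Duhamel's formula, but compactness of $M$ makes all the relevant bounds uniform. An alternative, avoiding the parametrix, would be a Dirichlet--Neumann bracketing argument: cover $M$ by small coordinate charts, compare with the explicitly computable eigenvalue counting functions of Euclidean cubes with Dirichlet and Neumann boundary conditions (for which the rectangular case and Debye's computation give the model), and let the mesh tend to zero; the min-max principle then squeezes $\mathcal{N}(\lambda)$ between the two sums. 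Either way, the constant $B_n/(2\pi)^n$ emerges from the Euclidean model, reflecting the volume of the region $\{(x,\xi) : |\xi|_g^2 \leq \lambda\}$ in phase space divided by $(2\pi)^n$ — the semiclassical principle that underlies the whole circle of ideas in this survey.
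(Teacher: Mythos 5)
Your strategy is sound, and it is worth saying at the outset that the paper itself offers no proof of this statement: it is quoted as Weyl's 1911 theorem in a survey section, with only the historical remark about Debye's conjecture for a plane rectangle, so there is no ``paper proof'' to match yours against. The route you propose --- heat trace $Z(t)=\sum_k e^{-t\lambda_k}=\int_M p(t,x,x)\,d\mathcal{V}_g$, the Minakshisundaram--Pleijel parametrix giving $p(t,x,x)=(4\pi t)^{-n/2}(1+O(t))$ uniformly on the compact $M$, then Karamata's Tauberian theorem --- is the classical and correct one for a closed manifold, and your constant bookkeeping is right: $\frac{1}{(4\pi)^{n/2}\Gamma(\frac n2+1)}=\frac{B_n}{(2\pi)^n}$ with $B_n=\pi^{n/2}/\Gamma(\frac n2+1)$, so the Tauberian output matches the stated asymptotics. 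The Dirichlet--Neumann bracketing alternative you mention is equally legitimate and is in fact closer to Weyl's original argument and to the Debye model the paper alludes to; a third route, more in the spirit of the rest of this survey, would be the semiclassical one (functional calculus for $h$-pseudodifferential operators and the phase-space volume of $\{|\xi|_g^2\le 1\}$), which yields the same constant via the correspondence $h^2\lambda\sim 1$ sketched in the section on the semiclassical limit.

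The one caveat is that what you submit is a plan rather than a proof: the analytic core --- the uniform on-diagonal short-time expansion of the heat kernel, i.e.\ the construction of the parametrix, the cut-off near the diagonal, and the Duhamel estimate of the remainder --- is asserted and correctly identified as the main obstacle, but not carried out, and Karamata's theorem is invoked as a black box. For a complete argument you would need either to execute that construction (compactness does make all bounds uniform, as you say) or to cite it precisely; as it stands the proposal is a correct and well-organized reduction of Weyl's law to two standard but nontrivial ingredients, not a self-contained demonstration.
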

On reviendra dans la dernière partie à l'étude du ''haut'' du spectre
en utilisant l'analyse semi-classique.

\subsection{Problèmes inverses : la géométrie spectrale}

\subsubsection*{Le son détermine t-il la forme d'un tambour? }

La problématique inverse est la suivante : étant donné le spectre
d'un laplacien ou d'un opérateur de Schrödinger, quelles informations
géométriques sur la variété $(M,g)$ peut-on avoir? Dans le cas du
laplacien, un des premiers à formaliser mathématiquement cette question
est sans doute M. Kac \textbf{{[}83{]}} en 1966 dans son célèbre article
''Can one hear the shape of a drum?''%
\footnote{''Peut-on entendre la forme d'un tambour?''%
}: pour le laplacien riemannien, une suite de valeurs propres (un ensemble
d'harmoniques du tambour) caractérise-t'elle, à isométrie près, la
variété de départ (la géométrie du tambour) ? Il est connu que si
deux variétés sont isométriques, elles sont alors isospectrales (c'est-à-dire
ont le même spectre). Mais qu'en est t-il de la réciproque ? 

On sait depuis 1964, que la réponse au problème de M. Kac est négative;
en effet, J. Milnor \textbf{{[}98{]}} donne comme exemple de variétés
isospectrales mais non isométriques, une paire de tores plats de dimension
16. Depuis, de nombreux autres exemples ont été trouvés, a commencer
par T. Sunada \textbf{{[}114{]}}, qui en 1985 donne une méthode de
construction systématique de variétés isospectrales non isomorphes.
C. Gordon et E.N. Wilson \textbf{{[}69{]}} ont aussi donné en 1984
une méthode de construction de déformations continues de variétés
qui sont isospectrales sans être isométriques. L'histoire ne s'arrête
pas là, d'autres méthodes de construction apparaissent, comme par
exemple la méthode de transplantation de P. Bérard \textbf{{[}12{]}},
\textbf{{[}13{]}} etc...

En 1992, C. Gordon, D. Webb et S. Wolpert \textbf{{[}70{]}} donnent
le premier exemple de deux domaines plans non isométriques, mais ayant
tout de même un spectre commun pour le laplacien avec conditions de
Neumann ou de Dirichlet.

\begin{center}
\includegraphics[scale=0.33]{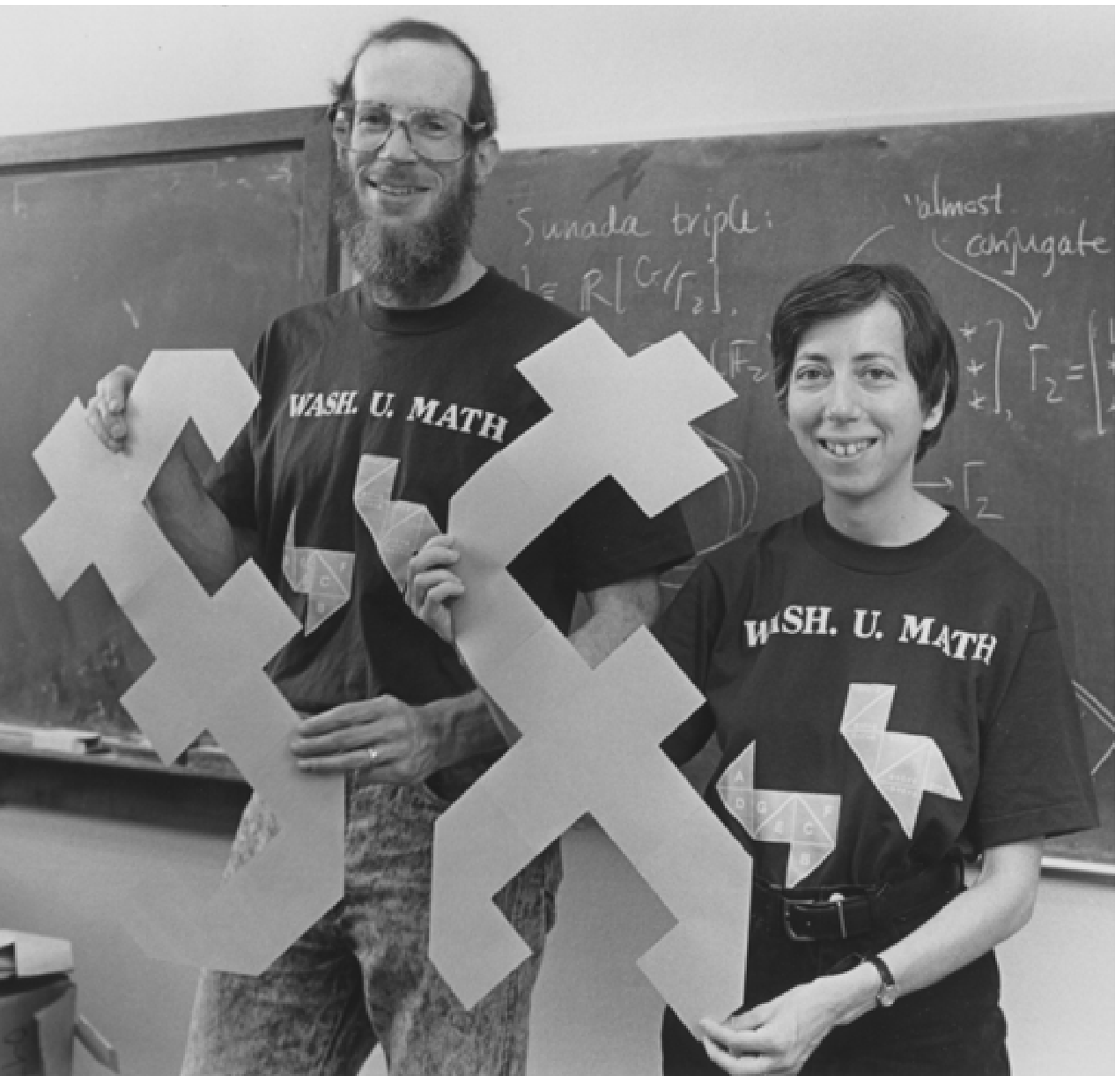}
\par\end{center}

\begin{center}
\textit{Fig 1. Une photographie de C. Gordon et D. Webb avec leurs
fameux domaines plans isospectraux mais non isométriques.}
\par\end{center}

\vspace{+0.25cm}

Pour plus de détails sur cet exemple ou pourra consulter les auteurs
\textbf{{[}70{]}}, \textbf{{[}71{]}} mais aussi voir les articles
très pédagogiques de P. Bérard \textbf{{[}15{]}}, \textbf{{[}16{]}},
\textbf{{[}17{]}}. Mentionnons aussi le travail de S. Zelditch \textbf{{[}121{]}}
datant de 2000, où il montre que si on se restreint à des parties
de $\mathbb{R}^{2}$ simplements connexes avec un bord analytique
et possédant deux axes de symétrie orthogonaux, alors le spectre détermine
complètement la géométrie.

\subsubsection*{Spectre des longueurs et formules de traces}

La donnée du spectre du laplacien donne des informations sur d'autres
invariants géométriques comme la dimension, le volume et l'intégrale
de la courbure scalaire. En fait le laplacien fournit aussi d'autres
invariants, comme par exemple le spectre des longueurs d'une variété.
Le spectre des longueurs d'une variété riemannienne est l'ensemble
des longueurs des géodésiques périodiques. En 1973 Y. Colin de Verdière
\textbf{{[}33{]}}, \textbf{{[}34{]}} montre que dans le cas compact,
modulo une hypothèse de généricité toujours vérifiée à courbure sectionnelle
négative, le spectre du laplacien détermine complètement le spectre
des longueurs. La technique utilisé par Y. Colin de Verdière repose
sur les formules de traces. Ces dernières s'utilisent dans un cadre
beaucoup plus général que celui des opérateurs de Schrödinger.

Le principe formel des formules de traces est le suivant: considérons
d'abord un opérateur linéaire $H$ non-borné sur un Hilbert ayant
un spectre discret : $\sigma(H)=\left\{ \lambda_{n},\, n\geq1\right\} $,
et puis une fonction $f$ ''sympathique''. La formule de trace consiste
alors à calculer la trace de l'opérateur $f(H)$ de deux façons différentes
:
\begin{itemize}
\item la première façon, lorsque que cela a un sens, avec les valeurs propres
de l'opérateur linéaire $f(H)$ :\textit{\[
\textrm{Tr}(f(H))={\displaystyle \sum_{k\geq1}f(\lambda_{k})}.\]
}
\item La seconde façon, avecle noyau de Schwartz de l'opérateur $f(H)$\textit{:}
si \textit{$f(H)\varphi(x)={\displaystyle {\displaystyle \int_{M}K_{f}(x,y)\varphi(y)\, dy}}$,}
alors\textit{\[
\textrm{Tr}(f(H))={\displaystyle {\displaystyle \int_{M}K_{f}(x,x)\, dx}}.\]
}
\end{itemize}
Ainsi\textit{\[
{\displaystyle \sum_{k\geq1}f(\lambda_{k})}={\displaystyle {\displaystyle \int_{M}K_{f}(x,x)\, dx}.}\]
}La difficulté réside dans le choix de $f$, d'une part pour légitimer
ces formules, et d'autre part pour arriver à en tirer des informations
spectro-géométriques. Les choix de fonctions $f$ les plus courants
sont : $f(x)=e^{-xt}$ où $t\geq0$ (fonction de la chaleur), $f(x)=\frac{1}{x^{s}}$,
où $s\in\mathbb{C}$, avec $\textrm{Re}(s)>1$ (fonction zêta de Riemann),
$f(x)=e^{-\frac{itx}{h}}$ où $t\geq0$ (fonction de Schrödinger),
etc ...

Pour fixer les idées, donnons un exemple simple de formule de trace
exacte : la formule sommatoire de Poisson pour un réseau $\Gamma$
de $\mathbb{R}^{n}$. La formule de Poisson sur le tore $\Gamma\setminus\mathbb{R}^{n}$
nous donne l'égalité :

\textit{\begin{equation}
{\displaystyle \sum_{\lambda\in\sigma(\Delta_{g})}e^{-\lambda t}}=\frac{\textrm{Vol}\left(\Gamma\setminus\mathbb{R}^{n}\right)}{(4\pi t)^{\frac{n}{2}}}{\displaystyle \sum_{l\in\Sigma}e^{-\frac{l^{2}}{4t}}}\label{eq:}\end{equation}
}où $\sigma(\Delta_{g})$ est le spectre de l'opérateur $\Delta_{g}$
et $\Sigma$ le spectre des longueurs comptés aves leurs multiplicités,
la multiplicité d'une longueur étant le nombre de classes d'homotopies
de lacets du tore plat $\Gamma\setminus\mathbb{R}^{n}$ représentées
par une géodésique périodique de cette longueur. Dans l'égalité (4.2)
le terme de droite correspond à la partie géométrique (volume, dimension,...)
alors que le terme de gauche contient les informations spectrales.
Pour une référence récente voir \textbf{{[}47{]}}.

\subsection{Métrique de Agmon et puits multiples}

\subsubsection*{Définition de la métrique}

Dans tout la suite $(M,g)$ est, soit une variété riemannienne compacte,
ou bien l'espace $\mathbb{R}^{n}$ tout entier. L'opérateur de Schrödinger
$P_{h}$ associé à la variété $(M,g)$ de potentiel $V$, $V$ étant
une fonction de $M$ dans $\mathbb{R}$, est défini comme l'opérateur
linéaire non-borné sur les fonctions lisses à support compact $\mathcal{C}_{c}^{\infty}(M,\mathbb{R})$
par \textit{\begin{equation}
P_{h}=-\frac{h^{2}}{2}\Delta_{g}+V\label{eq:}\end{equation}
}avec une fonction $V$ est localement bornée, globalement minorée
et confinante :${\displaystyle \lim_{|x|\rightarrow\infty}V(x)=+\infty}$;
ce qui assure un spectre réel discret constitué de valeurs propres.
Soit $E>0$; dans le cas où $M=\mathbb{R}^{n}$ on suppose en outre
sur la fonction $V$ que ${\displaystyle \lim_{|x|\rightarrow0}V(x)-E>0.}$
\begin{defn}
La métrique de Agmon est donnée par $\max\left(V(x)-E,0\right)dx^{2}$
où $dx^{2}=g(x)$; $g$ est la métrique de la variété $(M,g)$. 
\end{defn}
Ainsi l'application $m_{A}:\, x\in M\mapsto\max\left(V(x)-E,0\right)g(x)$
est un produit scalaire sur $T_{x}M`\times T_{x}M.$
\begin{rem}
C'est une métrique dégénérée sur $(M,g)$.\end{rem}
\begin{defn}
La distance de Agmon sur $(M,g)$ est définie par : $d_{A}(a,b)={\displaystyle \inf_{\gamma:a\rightarrow b}\mathcal{L}(\gamma)}$
où\[
\mathcal{L}(\gamma)={\displaystyle \int_{0}^{1}\sqrt{m_{A}(\gamma(t)\left(\dot{\gamma}(t);\dot{\gamma}(t)\right)}\, dt};\]
$\gamma:\left[0,1\right]\rightarrow M$ est un arc de classe $\mathcal{C}^{1}$
de $M$ tel que $\gamma(0)=a$ et $\gamma(1)=b$. 
\end{defn}
Si on se place sur $M=\mathbb{R}^{n}$ et si on considère deux réels
$a$ et $b$ avec $a<b$ nous avons que $d_{A}(a,b)={\displaystyle \int_{0}^{1}\sqrt{m_{A}(\gamma(t)\left(\dot{\gamma}(t);\dot{\gamma}(t)\right)}\, dt}$
avec $\gamma(t)=at+(1-t)b,\; t\in$$\left[0,1\right];$ ainsi $\dot{\gamma}(t)=a-b;$
et donc \[
d_{A}(a,b)={\displaystyle \int_{0}^{1}\sqrt{V(\gamma(t))-E\left|a-b\right|}\, dt}\]
\[
={\displaystyle \int_{a}^{b}\sqrt{V(x)-E}\, dx.}\]

\subsubsection*{Application aux puits multiples}

Pour plus de commodité on prendra $E=0$ et on suppose que la fonction
potentiel à $N$ puits :\[
\left\{ x\in M;\, V(x)\leq0\right\} ={\displaystyle \coprod_{j=1}^{N}U_{j}}\]
où $N$ est un entier et les $U_{j}$ sont des compacts de $M$ (on
parle de puits). Avec la distance $d_{A}$ on peut définir $S_{0}$:
la distance de Agmon entre deux puits\[
S_{0}:={\displaystyle \min_{j\neq k}d_{A}\left(U_{j},U_{k}\right)}.\]
Soit $s\in]0,S_{0}[$ et posons \[
B_{A}\left(U_{j},s\right):=\left\{ x\in M,\, d_{A}\left(x,U_{j}\right)<s\right\} ;\]
alors si $j\neq k$ on a $\overline{B\left(U_{j},s\right)}\cap U_{k}=\emptyset$
et on peut trouver des variétés compactes à bords $\mathcal{C}^{2}$-lisses
$\left(M_{j}\right)_{j}\subset M$ telles que $\overline{B\left(U_{j},s\right)}\subset\textrm{Int}\left(M_{j}\right)$
et si $j\neq k$ alors $M_{j}\cap U_{k}=\emptyset$. Sur les hilberts
$L^{2}\left(M_{j}\right)$ on considère les restrictions (avec conditions
de Dirichlet) auto-adjointes de $P_{h}$ à $M_{j}$, notée $P_{j}$.
Soit $I(h)=\left[\alpha(h),\beta(h)\right]$ un intervalle qui tend
vers le singleton $\{0\}$ quand $h\rightarrow0$. On a le :
\begin{thm}
\textbf{{[}77{]}} Il existe $s_{1}<s_{0}$ tel que pour $h$assez
petit, il existe une bijection $b\,:\,\sigma\left(P_{h}\right)\cap I(h)\rightarrow{\displaystyle \coprod_{j=1}^{N}\left(\sigma\left(P_{j}\right)\cap I(h)\right)}$
telle que pour tout $\tau<s_{1}$ on ait :\[
b(\lambda)-\lambda=O\left(e^{-\frac{\tau}{h}}\right).\]

\end{thm}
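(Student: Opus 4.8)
The statement is the standard "harmonic approximation / multiple wells" comparison between the spectrum of the global Schrödinger operator $P_h$ on $M$ and the direct sum of the Dirichlet realizations $P_j$ on the enlarged wells $M_j$. The proof rests entirely on Agmon-type exponential decay estimates for eigenfunctions in the classically forbidden region, combined with a Dirichlet bracketing / IMS localization argument. First I would recall the \textbf{Agmon estimate}: if $u$ is a normalized eigenfunction of $P_h$ (or of any $P_j$) with eigenvalue $\lambda \in I(h)$, then for every $\varepsilon>0$ there are constants $C_\varepsilon, h_0$ such that
\[
\int_M \left( |u(x)|^2 + h^2|\nabla u(x)|^2 \right) e^{\frac{2(1-\varepsilon) d_A(x,\,\cup_j U_j)}{h}}\, d\mathcal{V}_g \;\le\; C_\varepsilon
\]
for $0<h<h_0$. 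This is proved by the classical Agmon trick: testing the eigenvalue equation against $e^{2\phi/h} u$ with $\phi$ a Lipschitz approximation of the Agmon distance satisfying $|\nabla\phi|^2 \le V - \lambda$ off a small neighborhood of the wells, integrating by parts, and absorbing the good gradient term.

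\textbf{Construction of the comparison spaces.} With the manifolds $M_j$ already fixed so that $\overline{B_A(U_j,s)}\subset \mathrm{Int}(M_j)$ and $M_j\cap U_k=\emptyset$ for $j\ne k$, pick a smooth partition of unity: $\chi_j$ supported in $\mathrm{Int}(M_j)$ and equal to $1$ on a neighborhood of $\overline{B_A(U_j,s')}$ for some $s'<s$, plus a function $\chi_0$ supported in the region where $d_A(\cdot,\cup_j U_j)\ge s'$, with $\sum_{j=0}^N \chi_j^2 = 1$. The point is that on the support of $\nabla\chi_j$ ($j=0,\dots,N$) one has $d_A(\cdot,\cup U_j)\ge s'$, so the Agmon weight there is at least $e^{2(1-\varepsilon)s'/h}$, which beats the $O(h^{-2})$ loss from the localization error. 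Then I would run the argument in two directions.

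\textbf{From $\sigma(P_h)$ to $\sqcup\sigma(P_j)$ and back.} For an eigenpair $(\lambda,u)$ of $P_h$ with $\lambda\in I(h)$, the functions $\chi_j u$ are approximate eigenfunctions of $P_j$: using the IMS localization formula $P_h = \sum_j \chi_j P_h \chi_j - \sum_j |\nabla\chi_j|^2$ (up to the $h^2$ factor in front of the Laplacian) one gets $\|(P_j-\lambda)(\chi_j u)\| = O(e^{-\tau/h})$ for $\tau<(1-\varepsilon)s'$, by the Agmon bound on the commutator terms; the spectral theorem then puts an eigenvalue of $P_j$ within $O(e^{-\tau/h})$ of $\lambda$. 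Conversely, given an eigenpair of some $P_j$ in $I(h)$, its extension by $\chi_j$ (or by zero) is an approximate eigenfunction of $P_h$ by the same estimate. A standard dimension-counting / finite-dimensional linear algebra argument — comparing the ranks of the spectral projections $\mathbf{1}_{I(h)}(P_h)$ and $\bigoplus_j \mathbf{1}_{I(h)}(P_j)$, which the approximate-eigenvector estimates force to be equal for $h$ small — yields the bijection $b$ with $b(\lambda)-\lambda = O(e^{-\tau/h})$. Shrinking: choose $s_1<s_0$ with $s_1<s'$ strictly, and the estimate holds for all $\tau<s_1$.

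\textbf{Main obstacle.} The delicate point is not the Agmon estimate itself (which is routine once the geometry is set up) but making the \emph{bijection} canonical and checking that no eigenvalues are lost or created: one must verify that the spectral subspaces for $P_h$ and $\bigoplus P_j$ over $I(h)$ have the same (finite) dimension for $h$ small, which requires the approximate eigenprojections built from the $\chi_j$'s to be genuine quasi-isometries — i.e. that distinct orthonormal eigenfunctions stay almost-orthogonal after cutting and pasting. This is where the hypothesis $s_1<s_0=\min_{j\ne k}d_A(U_j,U_k)$ is essential: it guarantees the enlarged wells' influence zones are well-separated so that cross terms $\langle \chi_j u, \chi_k u'\rangle$ with $j\ne k$ are $O(e^{-\tau/h})$, preventing any collapse of dimension. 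One then invokes the classical fact (see \textbf{[77]}, or Helffer--Sjöstrand) that a self-adjoint operator possessing $N$ orthonormal $O(\epsilon)$-approximate eigenvectors for values in $I$ has at least $N$ eigenvalues within $O(\epsilon)$ of $I$, applied in both directions, to conclude equality of counts and hence the existence of $b$.
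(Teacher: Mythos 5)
Your sketch is essentially correct, but note that the paper itself gives no proof of this statement: it is quoted directly from Helffer--Sj\"ostrand \textbf{[77]}, and your argument (Agmon decay estimates, IMS localization with a partition of unity adapted to the enlarged wells $M_j$, quasimodes in both directions, and a spectral-stability/dimension-count argument using the almost-orthogonality guaranteed by $d_A(U_j,U_k)\geq S_0$) is precisely the strategy of that reference. The only points left implicit --- a lower bound $\sum_{j\geq1}\left\Vert \chi_{j}u\right\Vert ^{2}\geq1-O(e^{-c/h})$ so that at least one cut-off $\chi_j u$ is a genuine normalized quasimode, and the standard lemma comparing spectral subspaces (or a min-max argument) needed to upgrade ``equal counts plus quasimodes both ways'' to an eigenvalue-by-eigenvalue bijection with error $O(e^{-\tau/h})$ --- are routine and are carried out in \textbf{[77]}.
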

Ce théorème explique le phénomène de séparation des valeurs propres
dans chaques puits; avec une distance exponentiellement petite.

\section{Quantification et limite semi-classique}

\subsection{Problématique}

Le principe de correspondance est à la base des postulats de la mécanique
quantique, c'est le ''dictionnaire'' entre le monde classique et
le monde quantique. La quantification est la théorie mathématique
qui a pour but d'essayer de justifier ce dictionnaire; plus précisément
d'essayer de construire un morphisme entre ces deux mondes. La quantification
est le passage du classique au quantique; l'opération inverse est
qualifiée de limite semi-classique. Pour plus de détails sur la quantification
voir par exemples les livres \textbf{{[}60{]}} et \textbf{{[}30{]}}.

\vspace{0.25cm}

\begin{center}
\begin{tabular}{|c|c|}
\hline 
\textbf{Mécanique classique} & \textbf{Mécanique quantique}\tabularnewline
\hline
\hline 
$\left(M,\omega\right)$ variété symplectique & $\mathcal{H\subset}L^{2}(X)$, où $X$ variété .\tabularnewline
\hline 
Points $x\in M$.  & Vecteurs $\varphi\in\mathcal{H}$.\tabularnewline
\hline 
Algèbre $\mathcal{C}^{\infty}(M).$  & Algèbre d'opérateurs sur $\mathcal{H}$.\tabularnewline
\hline 
Crochet de Poisson$\left\{ .\right\} $. & Commutateur $\left[.\right]$.\tabularnewline
\hline
\hline 
Équation de Hamilton. & Équation de Schrödinger. \tabularnewline
\hline
\end{tabular}
\par\end{center}

\vspace{0.25cm}

On voit très nettement les premières grosses difficultés mathématiques
du passage d'un monde à l'autre : passage de la dimension finie à
infinie, passage du commutatif au non commutatif, la linéarité qui
apparaît en mécanique quantique, etc...

\subsection{Impossibilité de la quantification idéale}

Commerçons par le passage de la mécanique classique à la mécanique
quantique; la question mathématique précise de ce passage se formule
par l'existence de :
\begin{defn}
On appelle quantification (idéale) de la variété symplectique $(M,\omega)$
toute application linéaire :\[
\mathcal{\mathbf{Q}}\,:\,\mathcal{C}^{\infty}(M)\rightarrow\textrm{\{Algèbre d'opérateurs sur un Hilbert\}}\]
vérifiant les quatres axiomes suivants:\[
\mathbf{(i)}\;\;\mathcal{\mathbf{Q}}(1)=\mathbb{I}_{d}\]
\[
\mathbf{(ii)}\;\;\mathcal{\mathbf{Q}}\left(\left\{ f,g\right\} \right)=\frac{i}{h}\left[\mathcal{\mathbf{Q}}(f),\mathcal{\mathbf{Q}}(g)\right]\]
\[
\mathbf{(iii)}\;\;\mathcal{\mathbf{Q}}(x_{k})=x_{k}\textrm{ et }\mathbf{\mathcal{\mathbf{Q}}}(\xi_{k})=-ih\frac{\partial}{\partial x_{k}}\]
\[
\mathbf{(iv)}\;\;\left(\mathbf{\mathcal{\mathbf{Q}}}(f)\right)^{*}=\mathcal{\mathbf{Q}}(\overline{f}).\]

\end{defn}
En fait si on se place sur la variété symplectique la plus simple
possible : $\mathbb{R}^{2n}$ il n'existe pas de quantification idéale:
en effet on a le fameux (voir par exemple \textbf{{[}61{]}}):
\begin{thm}
\textbf{(Van Hove, 1952).} Il n'existe pas de quantification :\[
\mathcal{\mathbf{Q}}\,:\,\mathbb{R}[x_{1},x_{2},\ldots,x_{n},\xi_{1},\xi_{2},\ldots,\xi_{n}]\rightarrow\textrm{\{Algèbre d'opérateurs sur $L^{2}(\mathbb{R}^{n})$\}.}\]
 
\end{thm}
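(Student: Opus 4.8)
**The plan is to derive a contradiction from the four axioms applied to carefully chosen polynomials in the canonical variables.** The strategy is classical: axioms (iii) and (i) pin down the images of the coordinate functions $x_k$, $\xi_k$ and of the constant $1$; axiom (ii) then forces the images of higher-degree monomials by a recursive bootstrap, because Poisson brackets lower total degree and a polynomial can often be written (up to lower-order terms) as a bracket of two polynomials whose quantizations are already known. One pushes this bootstrap until two genuinely different bracket-expressions for the \emph{same} polynomial yield two incompatible operators.

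\textbf{First I would set up the one-dimensional core of the obstruction}, since it suffices to treat $n=1$ and then tensor trivially. Write $q=x_1$, $p=\xi_1$, so axiom (iii) gives $\mathbf{Q}(q)=M_q$ (multiplication) and $\mathbf{Q}(p)=-ih\,\partial_q$, and $\{q,p\}=1$, $\mathbf{Q}(1)=\mathbb{I}$, which is exactly consistent with (ii). Next I would compute $\mathbf{Q}(q^2)$ and $\mathbf{Q}(p^2)$: from $\{q^2,p\}=2q$ one gets $\tfrac{i}{h}[\mathbf{Q}(q^2),-ih\partial_q]=2M_q$, i.e. $[\mathbf{Q}(q^2),\partial_q]=-2M_q$, which together with symmetry (iv) forces $\mathbf{Q}(q^2)=M_{q^2}$; symmetrically $\mathbf{Q}(p^2)=-h^2\partial_q^2$. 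Likewise $\{q^2,p^2\}=4qp$ determines $\mathbf{Q}(qp)$ to be the symmetric (Weyl) ordering $-ih(q\partial_q+\tfrac12)$, again using (iv) to fix the additive constant. So far everything is forced and consistent — no contradiction yet, which is the point: the clash appears only at degree four or higher.

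\textbf{The decisive step is to quantize a degree-four polynomial two different ways.} The canonical choice is to look at brackets producing $q^2p^2$ (or $q^3p^3$). On the classical side one has, for instance,
\[
\{q^3,p^3\}=9q^2p^2,\qquad \tfrac13\{q^3p,\,qp^3\}=q^2p^2+\text{lower order},
\]
and the Jacobi identity guarantees these are equal as functions. On the quantum side, $\mathbf{Q}(q^3)$, $\mathbf{Q}(p^3)$, $\mathbf{Q}(q^3p)$, $\mathbf{Q}(qp^3)$ are all already determined by the degree-$\le 3$ bootstrap (each via a bracket lowering degree), so one computes the two operator brackets explicitly. They differ: the commutator $[\mathbf{Q}(q^3),\mathbf{Q}(p^3)]$ produces an extra $h^2$-term (coming from the non-vanishing of higher commutators of $M_q$ and $\partial_q$) that the other expression does not match. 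This discrepancy — an $O(h^2)$ operator that must be both zero and nonzero — is the contradiction. Concretely one finds the two candidates for $\mathbf{Q}(q^2p^2)$ differ by a nonzero multiple of $h^2\,\mathbb{I}$, so no linear $\mathbf{Q}$ can satisfy (ii) on all of $\mathbb{R}[q,p]$.

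\textbf{The main obstacle is bookkeeping the operator orderings correctly} so that the degree-$3$ quantizations are genuinely forced (not just forced up to a constant that one could tune to kill the degree-$4$ clash). The way to handle this cleanly is to prove by induction on degree $d$ that on the subspace of polynomials of degree $\le d$ the map $\mathbf{Q}$, \emph{if it exists}, must coincide with Weyl quantization (this uses (ii) with one factor linear, plus (iv) to fix constants, at each step); then one only has to exhibit \emph{one} pair of polynomials on which Weyl quantization fails axiom (ii) — and it is a standard computation that Weyl quantization satisfies (ii) only up to terms of order $h^2$ in degree $\ge 4$. Thus the whole theorem reduces to: (a) uniqueness $\Rightarrow$ $\mathbf{Q}=$ Weyl on polynomials, and (b) Weyl is not a Lie-algebra morphism beyond quadratics. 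I would present (a) as the inductive lemma and (b) as the explicit $q^2p^2$ computation, and refer to \textbf{{[}61{]}} for the details of the ordering identities.
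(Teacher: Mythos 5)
The paper never proves this statement: it is quoted as the classical no-go theorem with a pointer to Folland's book \textbf{[61]}, so your proposal can only be judged on its own terms. Your architecture is exactly the standard Groenewold--van Hove argument carried out in that reference: (a) the axioms force $\mathbf{Q}$ to coincide with Weyl quantization on polynomials of low degree, and (b) one degree-four polynomial admits two bracket representations whose quantizations differ by a nonzero multiple of $h^{2}\,\mathbb{I}$, contradicting (ii) and linearity. That is the right plan, and your reduction to $n=1$ and your degree-$\leq 2$ bootstrap are essentially sound.

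Two concrete points, however, would make your ``decisive step'' fail as written. First, the identity $\tfrac13\{q^{3}p,\,qp^{3}\}=q^{2}p^{2}+\text{lower order}$ is false: the Poisson bracket of two homogeneous polynomials of degree $4$ is homogeneous of degree $6$, and indeed $\{q^{3}p,qp^{3}\}$ is proportional to $q^{3}p^{3}$; no lower-order terms can appear, and the Jacobi identity is not what guarantees the equality of two expressions for the same polynomial --- it is a direct computation. The correct companion to $\tfrac19\{q^{3},p^{3}\}=q^{2}p^{2}$ is $\tfrac13\{q^{2}p,\,qp^{2}\}=q^{2}p^{2}$; with this pair the Weyl computation does yield the announced scalar discrepancy of order $h^{2}$. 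Second, axiom (iv) cannot ``fix the additive constants'': for a real polynomial, self-adjointness only forces the ambiguous scalar to be real, not zero. The constants are eliminated either by bracketing with $qp$ (a monomial $q^{a}p^{b}$ with $a\neq b$ is an eigenvector of $\{\,\cdot\,,qp\}$ with nonzero eigenvalue, so a scalar summand is incompatible with (ii)), or by noting that additive scalars in the cubic quantizations cancel in the commutators of the final comparison --- but then the quadratics themselves must be pinned down exactly, which again uses the $qp$-brackets together with the fact, coming from (iii), that an operator commuting with all the $M_{x_{k}}$ and $\partial_{x_{k}}$ is scalar; axiom (iv) plays no role there. With these two repairs your proof becomes the standard one of \textbf{[61]}.
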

Il faut alors affaiblir la définition de la quantification idéale,
en particulier l'axiome \textbf{(ii)}. Cela est possible dans le cas
de $\mathbb{R}^{2n}$. On verra ça dans la partie 7 à l'aide des opérateurs
pseudo-différentiels.

\subsection{Principe de l'analyse semi-classique}

De manière extrêmement simple et naïve l'idée de l'analyse semi-classique
est de comprendre le quantique lorsque le paramètre $h\rightarrow0$.
Pour le lecteur qui voudrait en savoir plus sur l'analyse semi-classique,
on conseille la littérature suivante : Y. Colin de Verdière \textbf{{[}46{]}},
Dimassi-Sjöstrand \textbf{{[}52{]}}, L. Evans et M. Zworski \textbf{{[}58{]}},
A. Martinez \textbf{{[}94{]}}, D. Robert \textbf{{[}109{]}}, S. Vu
Ngoc \textbf{{[}119{]}}. Revenons un instant à la limite $h\rightarrow0$,
quel est son sens physique? En ''principe'' tout système physique
est de par nature quantique. D'après les fameuses inégalités d'incertitude
de Heisenberg, on ne peut pas mesurer précisément à la fois vitesse
et position d'un électron, sauf si $h=0$. En fait plus $h$ est petit,
plus on peut faire des mesures simultanées précises. Ainsi plus $h\rightarrow0$,
plus on se rapproche du déterminisme de la mécanique classique sur
le fibré cotangent $T^{*}M$. En pratique quand on fait de l'analyse
semi-classique, on travaille à la fois avec des objets classiques
(variétés symplectiques, algèbre des fonctions $\mathcal{C}^{\infty}$,
crochet de Poisson, équations de Hamilton,...) et des objets quantiques
(espace de Hilbert, algèbre d'opérateurs, commutateur, équation de
Schrödinger,...). 

Une autre philosophie de l'analyse semi-classique est la suivante
: dans la limite des grandes valeurs propres, le spectre de l'opérateur
de Schrödinger sur une variété riemanienne $(M,g)$\textit{\[
H=-\frac{h^{2}}{2}\Delta_{g}+V\]
}ou plus généralement d'un opérateur pseudo-différentiel, est remarquablement
liée à une géométrie sous-jacente. Celle-ci vit sur le fibré cotangent
$T^{*}M$, vu comme une variété symplectique: c'est la géométrie de
l'espace des phases. C'est d'ailleurs le même phénomène qui permet
de voir la mécanique classique (structure de variété symplectique)
comme limite de la mécanique quantique (structure d'algèbre d'opérateurs).

Voyons pourquoi s'intéresser à l'asymptotique du spectre de l'opérateur
\textit{$H$,} revient dans une certaine mesure à faire tendre le
paramètre $h$ vers $0$ (limite semi-classique). Par exemple, pour
$E>0$ fixé, l'équation :\[
-\frac{h^{2}}{2}\Delta_{g}\varphi=E\varphi\]
admet $\varphi_{k}$, le $k$-ième vecteur propre du laplacien $\Delta_{g}$,
comme solution si\[
-\frac{h^{2}}{2}\lambda_{k}=E.\]
Ainsi si $h\rightarrow0^{+}$, alors $\lambda_{k}\rightarrow+\infty$.
C'est pourquoi la limite semi-classique peut aussi se voir comme l'asymptotique
des grandes valeurs propres du laplacien.

\section{Opérateurs pseudo-différentiels}

\subsection{De Fourier à nos jours...}

Historiquement on peut dire que c'est Fourier en utilisant la transformée
de Fourier pour résoudre l'équation de la chaleur $\frac{\partial u}{\partial t}=\triangle u$
qui fut le pionnier des opérateurs pseudo-différentiels. Dans le début
des annés 60, Caldéron et Zygmund introduisent les opérateurs intégraux
a noyaux singuliers en vue de résoudre des équations aux dérivées
partielles avec des coefficients variables. Entre 60 et 70, Niremberg
et Hörmander ont donné la théorie des opérateurs pseudo-différentiels
sans le paramètre semi-classique $h$. Un peu plus tard Maslov, Helffer,
Robert et Sjöstrand se sont intéresses à l'étude avec $h$. Il y en
à en réalité moulte opérateurs pseudo-différentiels, destinés à tel
ou tel application et il est difficile de présenter une théorie très
générale. Dans la suite, on va rapidement décrire ''une'' théorie
des opérateurs pseudo-différentiels avec le paramètre semi-classique
$h$. De nos jours la théorie des opérateurs pseudo-différentiels
est devenu un outil d'analyse très puissant, en particulier en équations
aux dérivées partielles, en analyse sur les variétés et même en géométrie
algébrique complexe. On va rappeler brièvement une des définitions
des opérateurs pseudo-différentiels : celle de la quantification de
Weyl sur $\mathbb{R}^{2n}$ (ou sur $T^{*}X$) et donner les principales
propriétés. de ce type d'opérateurs pseudo-différentiels. Pour plus
de détails, voir \textbf{{[}52{]}}.

\subsection{Symboles et transformée de Weyl}

De manière très formelle, la quantification de Weyl consite à associer
à une fonction symbole convenable $a:\,(x,\xi)\mapsto a(x,\xi)\in\mathcal{C}^{\infty}(\mathbb{R}^{2n})$
un opérateur linéaire $\mathbf{O}_{p}^{w}(a)$ de $\mathcal{S}(\mathbb{R}^{n})$
dans lui même, admettant une représentation intégrale :pour toute
fonction $u\in\mathcal{S}(\mathbb{R}^{n})$ et pour tout $x\in\mathbb{R}^{n}$:

\[
\left(\mathbf{O}_{p}^{w}(a)(u)\right)(x):=\frac{1}{(2\pi h)^{n}}\int\int_{\mathbb{R}^{n}\times\mathbb{R}^{n}}e^{\frac{i}{h}(x-y)\xi}a\left(\frac{x+y}{2},\xi\right)u(y)\, dyd\xi.\]
Une des premières difficulté des la théorie des opérateurs pseudo-différentiels
est de donner un sens à ce type de formule. Bien sur pour des symboles
$a\in\mathcal{S}(\mathbb{R}^{2n})$ c'est plutôt facile à définir,
mais pour faire de la quantification il faut au moins autoriser des
symboles polynomiaux en $x$ et $\xi$. On va définir une classe de
symboles usuelle pour faire de la quantification : sur la variété
$X:=\mathbb{R}^{n}$, et pour $k,m\in\mathbb{Z}^{2}$, on définit
l'ensemble de symboles d'indice $k$ et de poids $\left\langle z\right\rangle ^{m}$
sur la variété $X$ où $\left\langle z\right\rangle =\left\langle x,\xi\right\rangle :=\left(1+|z|^{2}\right)^{\frac{1}{2}}$,
par :\[
S^{k}\left(X,\left\langle z\right\rangle ^{m}\right)\]
\[
:=\left\{ a_{h}(z)\in\mathcal{C}^{\infty}\left(T^{*}X\right),\,\forall\alpha\in\mathbb{N}^{n},\,\exists C_{\alpha}\geq0,\,\forall z\in T^{*}X,\,\left|{\displaystyle \partial_{z}^{\alpha}}a_{h}(z)\right|\leq C_{\alpha}h^{k}\left\langle z\right\rangle ^{m}\right\} .\]
Avec des intégrations par parties habiles (technique des intégrales
oscillantes), voir \textbf{{[}52{]}} ou \textbf{{[}94{]}} on montre
que :
\begin{thm}
Si $a\in S^{k}\left(X,\left\langle z\right\rangle ^{m}\right)$, avec
$m\geq0$, alors $\mathbf{O}_{p}^{w}(a)$ est un opérateur linéaire
de $\mathcal{S}(\mathbb{R}^{n})$ dans lui même.
\end{thm}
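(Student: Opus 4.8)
The claim is that for $a \in S^{k}(X,\langle z\rangle^{m})$ with $m \geq 0$, the Weyl quantization $\mathbf{O}_{p}^{w}(a)$ maps $\mathcal{S}(\mathbb{R}^{n})$ into itself. The integral defining $\mathbf{O}_{p}^{w}(a)(u)(x)$ is generally not absolutely convergent (since $a$ grows polynomially in $\xi$), so the plan is first to give it a rigorous meaning as an oscillatory integral via integration by parts, and then to show that the resulting function of $x$ is rapidly decreasing together with all its derivatives.

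First I would regularize: introduce a cutoff $\chi(\epsilon\xi)$ with $\chi \in \mathcal{C}_c^\infty$, $\chi(0)=1$, and study the absolutely convergent integrals $I_\epsilon(x)$, showing they converge as $\epsilon \to 0$ independently of the choice of $\chi$. The mechanism is the standard one: on the support of the integrand, write $e^{\frac{i}{h}(x-y)\xi}$ and exploit that the operator $L = \frac{1 - \frac{i}{h}(x-y)\cdot h\partial_\xi}{1 + |x-y|^2/h^2}$ (or a similar normalized first-order operator, possibly also using $\langle D_y\rangle$-type operators acting in $y$) satisfies $L\,e^{\frac{i}{h}(x-y)\xi} = e^{\frac{i}{h}(x-y)\xi}$. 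Integrating by parts $N$ times transfers derivatives onto $a(\frac{x+y}{2},\xi)u(y)$ and produces the decaying factor $\langle (x-y)/h\rangle^{-N}$. Since $a$ and its $\xi$-derivatives are bounded by $C_\alpha h^k\langle z\rangle^{m}$, choosing $N > m + 2n$ makes the integral over $\xi$ absolutely convergent; since $u \in \mathcal{S}$, the $y$-integral converges too. This simultaneously defines $\mathbf{O}_{p}^{w}(a)(u)(x)$ and shows it is a continuous (indeed smooth) function of $x$ — differentiation under the integral sign is justified because each $\partial_x$ either hits $a$ (lowering nothing essential, again controlled by the symbol estimates after one more integration by parts) or hits the phase, which only improves decay.

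The remaining point is rapid decrease in $x$. Here the key observation is that $(x-y)\,e^{\frac{i}{h}(x-y)\xi} = -ih\,\partial_\xi\, e^{\frac{i}{h}(x-y)\xi}$, so multiplying the integrand by a power of $(x-y)$ and integrating by parts in $\xi$ turns it into $\xi$-derivatives of $a$, which by the symbol estimates are no worse than $a$ itself (same weight $\langle z\rangle^m$ up to constants). Then I would split $x^\beta = ((x-y) + y)^\beta$ by the binomial formula: the $(x-y)$ part is absorbed by the trick just described, and the $y$-part is absorbed by the rapid decay of $u$ (and of its derivatives, which appear after the earlier integrations by parts in $y$). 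Combining, for every $\beta, \gamma$ one gets a bound $|x^\beta \partial_x^\gamma \mathbf{O}_{p}^{w}(a)(u)(x)| \leq C_{\beta\gamma}$, uniformly in $x$, which is exactly the statement that $\mathbf{O}_{p}^{w}(a)(u) \in \mathcal{S}(\mathbb{R}^{n})$. Tracking constants through these estimates also yields continuity of $\mathbf{O}_{p}^{w}(a)$ on $\mathcal{S}$, though that is not asked here.

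The main obstacle is purely bookkeeping: one must perform integrations by parts in $\xi$ and in $y$ simultaneously and in the right order, keeping careful track of how many derivatives land on $a$ versus on $u$, so that the final bound on the number $N$ of integrations by parts needed is uniform over the multi-indices $\beta,\gamma$ one wants to control. There is no conceptual difficulty — everything rests on the symbol inequalities $|\partial_z^\alpha a_h(z)| \leq C_\alpha h^k\langle z\rangle^m$ and on Schwartz decay of $u$ — but one should be careful that the weight $m \geq 0$ is used (for $m < 0$ the statement is only easier), and that the factors of $h$, though irrelevant to the qualitative claim, come out as harmless positive powers. For a fully detailed execution of these oscillatory-integral manipulations I refer to \textbf{[52]} or \textbf{[94]}.
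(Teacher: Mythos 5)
Your proposal is correct and follows exactly the standard oscillatory-integral route that the paper itself invokes without writing out: the paper gives no proof of this theorem, stating only that it follows \og avec des int�grations par parties habiles (technique des int�grales oscillantes) \fg{} and deferring to \textbf{[52]} and \textbf{[94]}, which is the argument you sketch (regularization, integrations by parties in $y$ and $\xi$, symbol estimates, binomial splitting $x^{\beta}=((x-y)+y)^{\beta}$ for the rapid decay). One precision to keep in mind when executing it: the absolute convergence of the $\xi$-integral is bought by the $\left\langle \xi\right\rangle ^{-2N}$ factors produced by the integrations par parties in $y$ (your \og $\left\langle D_{y}\right\rangle $-type \fg{} operator, exploiting $u\in\mathcal{S}$ and the bounds on $\partial_{x}$-derivatives of $a$), not by the factor $\left\langle (x-y)/h\right\rangle ^{-N}$, which decays only in $x-y$ and serves instead in the rapid-decrease-in-$x$ step.
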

Donnons un exemple très important de calcul de transformée de weyl
: celles des variables canoniques.
\begin{example}
\textit{Le quantifié de Weyl de la fonction $(x,\xi)\mapsto1$ est
l'opérateur identité. Le quantifié de Weyl de la fonction $(x,\xi)\mapsto x_{j}$
est l'opérateur de multiplication par la variable $x_{j}$. Le quantifié
de Weyl de la fonction $(x,\xi)\mapsto\xi_{j}$ est l'opérateur de
dérivation $-ih\frac{\partial}{\partial x_{j}}.$}
\end{example}
En analyse semi-classique, on est aussi amené à considérer des symboles
ayant des développements asymptotiques en puissance de $h$: soit
$a_{h}\in S^{0}\left(X,\left\langle z\right\rangle ^{m}\right)$,
on dira que ce symbole est classique si et seulement s'il existe une
suite de symboles $\left(a_{j}\right)_{j\in\mathbb{N}}\in S^{0}\left(X,\left\langle z\right\rangle ^{m}\right)^{\mathbb{N}}$
indépendant de $h$ tels que pour tout $k^{\prime}\geq0$, on ait
:\[
\left(a_{h}(z)-{\displaystyle \sum_{j=0}^{k^{\prime}}a_{j}(z)h^{j}}\right)\in S^{k^{\prime}+1}\left(X,\left\langle z\right\rangle ^{m}\right).\]
On note alors $a_{h}={\displaystyle \sum_{j=0}^{+\infty}a_{j}h^{j}}$,
on dira aussi que $a_{0}$ est le symbole principal de $a_{h}$. Le
théorème de resommation de Borel (voir\textbf{ {[}Mar{]}}) assure
que pour toute suite arbitraire de symboles $\left(a_{j}\right)_{j\in\mathbb{N}}\in S\left(X,\left\langle z\right\rangle ^{m}\right)$,
il existe une unique, modulo $O(h^{\infty})$, fonction symbole $a_{h}$
telle que $a_{h}={\displaystyle \sum_{j=0}^{+\infty}a_{j}h^{j}}$
.

\subsection{Quelques propriétés}

Si on fait le produit de deux opérateurs pseudo-différentiels de symboles
$a$ et $b$, alors l'opérateur produit est encore un opérateur pseudo-différentiel
:
\begin{thm}
Quel que soient les symboles $(a,b)\in S\left(X,\left\langle \xi\right\rangle ^{m_{1}}\right)\times S\left(X,\left\langle \xi\right\rangle ^{m^{\prime}}\right)$,
il existe un symbole $c\in S\left(X,\left\langle \xi\right\rangle ^{m+m^{\prime}}\right)$
tels que \[
\mathbf{O}_{p}^{w}(a)\circ\mathbf{O}_{p}^{w}(b)=\mathbf{O}_{p}^{w}(c).\]
 De plus un choix possible pour le symbole $c$ est donné par la formule
de Moyal :\[
c=a\star b:=\sum_{j=0}^{\infty}\frac{h^{j}}{j!(2i)^{j}}a\left(\sum_{p=1}^{n}\overleftarrow{\partial_{\xi_{p}}}\overrightarrow{\partial_{x_{p}}}-\overleftarrow{\partial_{x_{p}}}\overrightarrow{\partial_{\xi_{p}}}\right)^{j}b\]
où la flèche indique sur quelle fonctions, $a$ ou $b$ la dérivée
doit opérer.
\end{thm}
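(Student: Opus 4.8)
The plan is to compute directly the Schwartz kernel of the composite operator $\mathbf{O}_{p}^{w}(a)\circ\mathbf{O}_{p}^{w}(b)$ from the oscillatory integral representation, and then to recognize it — after a stationary-phase or exact Fourier-analytic manipulation — as the kernel of a single Weyl operator $\mathbf{O}_{p}^{w}(c)$, from which the Moyal formula for $c$ falls out. First I would write, for $u\in\mathcal{S}(\mathbb{R}^n)$,
\[
\left(\mathbf{O}_{p}^{w}(a)\mathbf{O}_{p}^{w}(b)u\right)(x)=\frac{1}{(2\pi h)^{2n}}\int e^{\frac{i}{h}\left[(x-y)\xi+(y-z)\eta\right]}a\!\left(\tfrac{x+y}{2},\xi\right)b\!\left(\tfrac{y+z}{2},\eta\right)u(z)\,dy\,d\xi\,dz\,d\eta,
\]
all integrals over $\mathbb{R}^n$, the composition being justified because $\mathbf{O}_{p}^{w}(a)$ and $\mathbf{O}_{p}^{w}(b)$ both map $\mathcal{S}(\mathbb{R}^n)$ to itself by the theorem on $S^k(X,\langle z\rangle^m)$ quoted above. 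The goal is to carry out the $y$ and $\eta$ (or $y$ and $\xi-\eta$) integrations to bring this to the form $\frac{1}{(2\pi h)^n}\int e^{\frac{i}{h}(x-z)\zeta}\,c\!\left(\tfrac{x+z}{2},\zeta\right)u(z)\,dz\,d\zeta$, which identifies $c$.

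Next I would make the change of variables that symmetrizes the phase — typically setting $w=y-\tfrac{x+z}{2}$ and $\zeta=\tfrac{\xi+\eta}{2}$, $\theta=\xi-\eta$ — so that the phase becomes the symplectic form $(x-z)\zeta$ plus a nondegenerate quadratic term in $(w,\theta)$, precisely $w\cdot\theta$ up to sign. Integrating the Gaussian-type (oscillatory) factor $e^{\frac{i}{h}w\theta}$ against $a(\tfrac{x+z}{2}+\tfrac{w}{2}+\cdots,\cdot)\,b(\cdots)$ is exactly a stationary-phase computation at the critical point $w=\theta=0$; expanding $a$ and $b$ in Taylor series there and using $\int e^{\frac{i}{h}w\theta}w^\alpha\theta^\beta\,dw\,d\theta\sim h^{|\alpha|}(\text{const})\,\delta_{\alpha\beta}\,\partial$-type identities produces the asymptotic series
\[
c(x,\zeta)\sim\sum_{j\ge 0}\frac{h^j}{j!\,(2i)^j}\left(\sum_{p=1}^n \overleftarrow{\partial_{\xi_p}}\,\overrightarrow{\partial_{x_p}}-\overleftarrow{\partial_{x_p}}\,\overrightarrow{\partial_{\xi_p}}\right)^j,
\]
acting with the left factor on $a$ and the right factor on $b$, evaluated at the common point. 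The combinatorial bookkeeping matching the multinomial expansion of the bidifferential operator to the Gaussian moments is routine but must be done carefully to pin the constant $\tfrac{1}{j!(2i)^j}$.

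The two steps I expect to be the real obstacles are: (a) making the oscillatory integrals rigorous — the symbols are not integrable, only polynomially bounded with all derivatives controlled in $S^k(X,\langle z\rangle^m)$, so one must insert the usual non-stationary-phase cutoffs, integrate by parts in $w$ and $\theta$ to gain decay, and check that all remainders lie in the asserted symbol class $S(X,\langle\xi\rangle^{m+m'})$ with the correct $h$-power gains; and (b) controlling the remainder after truncating the stationary-phase expansion at order $k'$, i.e. showing $c-\sum_{j=0}^{k'}(\cdots)h^j\in S^{k'+1}$, which is the content of ``$c$ is a well-defined (classical) symbol''. Both are standard applications of the Kuranishi/oscillatory-integral technique and of Taylor's formula with integral remainder, and I would simply cite \textbf{[52]} or \textbf{[94]} for the estimates on the remainder symbols rather than reproduce them, since the excerpt already signals that route (``avec des int\'egrations par parties habiles''). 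Once the kernel has been identified and the remainder estimated, the theorem — existence of $c\in S(X,\langle\xi\rangle^{m+m'})$ with $\mathbf{O}_{p}^{w}(a)\circ\mathbf{O}_{p}^{w}(b)=\mathbf{O}_{p}^{w}(c)$ and $c=a\star b$ — follows at once.
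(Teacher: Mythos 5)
Your plan is sound, and it is essentially the canonical argument: the paper itself gives no proof of this statement (it is a survey and delegates precisely to \textbf{[52]} and \textbf{[94]}, the same references you invoke), and the route you describe --- writing the composed Schwartz kernel as a quadruple oscillatory integral, symmetrizing the phase by the change of variables centred at $\tfrac{x+z}{2}$, and extracting the Moyal series by stationary phase at $w=\theta=0$ with remainder control in the symbol classes --- is exactly the standard derivation found there. The only point worth making explicit in your write-up is that the displayed series for $a\star b$ is an \emph{asymptotic} expansion: the symbol $c$ is well defined only modulo $O(h^{\infty})$ (Borel resummation, as recalled earlier in the paper for classical symbols), so the equality $c=a\star b$ must be read in that sense; your step (b) on truncation remainders is where this is settled, and citing \textbf{[52]} or \textbf{[94]} for the quantitative estimates is consistent with the paper's own level of detail.
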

Citons maintenant un théorème de continuité fondamental: 
\begin{thm}
(Calderon-Vaillancourt) Si le symbole $a\in S^{k}\left(X,1\right),\, k\geq1$,
alors l'opérateur $\mathbf{O}_{p}^{w}(a)$ est un opérateur linéaire
continu de \textup{$L^{2}(\mathbb{R}^{n})$ }dans\textup{ $L^{2}(\mathbb{R}^{n})$
}:\[
\exists C,M>0,\;|||\mathbf{O}_{p}^{w}(a)|||\leq C\sum_{|\alpha|\leq M}||\partial^{\alpha}a||_{L^{\infty}(\mathbb{R}^{2n})}.\]

\end{thm}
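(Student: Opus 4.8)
The plan is to establish the $L^{2}$-continuity by the Cotlar--Stein almost-orthogonality method. First I would reduce to the case $a\in\mathcal{S}(\mathbb{R}^{2n})$: for a Schwartz symbol $\mathbf{O}_{p}^{w}(a)$ is unambiguously defined (indeed Hilbert--Schmidt), so all the manipulations below are legitimate without convergence worries, and since the bound to be proved involves only the finitely many seminorms collected in $\Lambda:=\sum_{|\alpha|\le M}\|\partial^{\alpha}a\|_{L^{\infty}(\mathbb{R}^{2n})}$, with an $h$-independent constant, a routine regularization ($a\ast\rho_{\varepsilon}$, cut off at infinity) then extends it to all $a\in S^{k}(X,1)$. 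The sole structural input used throughout is the explicit oscillatory-integral form of the Schwartz kernel, $K_{a}(x,y)=(2\pi h)^{-n}\int e^{\frac{i}{h}(x-y)\cdot\xi}a\!\left(\tfrac{x+y}{2},\xi\right)d\xi$.

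Next I would decompose the symbol at unit scale in phase space. Fix $\chi\in\mathcal{C}_{c}^{\infty}(\mathbb{R}^{2n})$ with $\sum_{j\in\mathbb{Z}^{2n}}\chi(z-j)\equiv1$, set $a_{j}:=\chi(\cdot-j)\,a$ and $A_{j}:=\mathbf{O}_{p}^{w}(a_{j})$, so that $A:=\mathbf{O}_{p}^{w}(a)=\sum_{j}A_{j}$, the series converging strongly (this being part of the conclusion of the Cotlar--Stein lemma applied below). Each $a_{j}$ is supported in a fixed translate of $\mathrm{supp}\,\chi$ and, by the Leibniz rule, satisfies $\|\partial^{\alpha}a_{j}\|_{L^{\infty}}\le C_{\alpha}\Lambda$ uniformly in $j$. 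Because $a_{j}$ has compact support --- in particular a $\xi$-support of bounded volume --- integrating by parts $N$ times in $\xi$ in the formula for $K_{a_{j}}$ gives $|K_{a_{j}}(x,y)|\le C_{N}\,h^{-n}\bigl(1+|x-y|/h\bigr)^{-N}\Lambda$, valid on the slab where $\tfrac{x+y}{2}$ lies in a bounded set, and Schur's test then yields $|||A_{j}|||\le C\Lambda$ with $C$ independent of $j$ and of $h$.

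The core of the argument is the almost-orthogonality estimate: for every $N$ there is $C_{N}$ with $|||A_{j}^{*}A_{k}|||+|||A_{j}A_{k}^{*}|||\le C_{N}(1+|j-k|)^{-N}\Lambda^{2}$ for all $j,k\in\mathbb{Z}^{2n}$. To prove it I would write the kernel of $A_{j}^{*}A_{k}$ as the multiple oscillatory integral
\[
L_{jk}(x,y)=\frac{1}{(2\pi h)^{2n}}\iiint e^{\frac{i}{h}[(z-y)\cdot\zeta-(z-x)\cdot\eta]}\,a_{k}\!\left(\tfrac{z+y}{2},\zeta\right)\overline{a_{j}\!\left(\tfrac{z+x}{2},\eta\right)}\,d\eta\,d\zeta\,dz
\]
and integrate by parts adaptively according to the relative position of the two cubes. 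If the frequency centres are separated, $|j_{\xi}-k_{\xi}|\gtrsim|j-k|$, the $z$-gradient of the phase equals $\zeta-\eta$ and is $\gtrsim|j-k|$ on the amplitude's support, so $N$ integrations by parts in $z$ produce the factor $(h/|j-k|)^{N}$; if instead the space centres are separated, $|j_{x}-k_{x}|\gtrsim|j-k|$, the amplitude forces $|x-y|\gtrsim|j-k|$ and one of $|z-x|,|z-y|$ is $\gtrsim|j-k|$, so integrating by parts in $\eta$ or in $\zeta$ produces the analogous factor; in both cases all remaining integrals run over bounded sets and the differentiated amplitude stays $\le C_{N}\Lambda^{2}$ in sup-norm, so one obtains $|L_{jk}(x,y)|\le C_{N}h^{N-2n}(1+|j-k|)^{-N}\Lambda^{2}$ on a bounded spatial slab, hence (for $N\ge2n$, $h\le1$) a kernel bound to which Schur's test applies. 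Taking $N>4n$ makes $\sum_{k}\sqrt{|||A_{j}^{*}A_{k}|||}$ and $\sum_{k}\sqrt{|||A_{j}A_{k}^{*}|||}$ bounded by $C\Lambda$ uniformly in $j$, and the Cotlar--Stein lemma gives $|||A|||=\bigl|\bigl|\bigl|\sum_{j}A_{j}\bigr|\bigr|\bigr|\le C\Lambda$, which is exactly the asserted inequality.

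I expect the delicate part to be the almost-orthogonality step, precisely the bookkeeping that selects, as a function of the relative position of the two phase-space cubes, which variable to integrate by parts in and how many times before differentiating the amplitude destroys the uniform seminorm control; the regime $j_{x}\approx k_{x}$, $|j_{\xi}-k_{\xi}|$ large is the subtle one, since there the two operators overlap spatially and the leading symbol of $A_{j}^{*}A_{k}$ vanishes identically, so the decay in $|j-k|$ must be extracted from a genuine non-stationary-phase estimate rather than from disjointness of supports, and one must check that the positive powers of $h$ it generates compensate the $h^{-2n}$ prefactor. A secondary, routine matter to pin down is the exact sense of the identity $A=\sum_{j}A_{j}$ and the harmlessness of the prefactors $h^{-n}$, which cancel against $d\xi$ after the changes of variables.
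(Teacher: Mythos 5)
Your proposal cannot be matched against a proof in the paper, because the paper gives none for this statement: it is a survey item, stated with a general pointer to the references Dimassi--Sj\"ostrand \textbf{[52]} and Martinez \textbf{[94]}. What you propose is in fact the classical proof found in those references (unit-scale phase-space partition plus the Cotlar--Stein lemma), and it is sound in outline: the reduction to Schwartz symbols, the decomposition $a=\sum_{j}\chi(\cdot-j)a$, the uniform kernel/Schur bound $\left|\left|\left|A_{j}\right|\right|\right|\leq C\Lambda$, and the almost-orthogonality estimate $\left|\left|\left|A_{j}^{*}A_{k}\right|\right|\right|+\left|\left|\left|A_{j}A_{k}^{*}\right|\right|\right|\leq C_{N}(1+|j-k|)^{-N}\Lambda^{2}$ are exactly the standard steps, and Cotlar--Stein then yields the bound with an $M$ depending only on $n$. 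Two points should be written out rather than left implicit: first, in the spatially separated case the choice between integrating by parts in $\eta$ or in $\zeta$ depends on $(x,y,z)$, so either split the domain accordingly or, more cleanly, integrate by parts so as to produce the two weights $(1+|z-x|/h)^{-N}$ and $(1+|z-y|/h)^{-N}$ simultaneously, whose product is $\lesssim(1+|x-y|/h)^{-N}\lesssim(h/|j-k|)^{N}$ on the support of the amplitude; second, the near-diagonal blocks $|j-k|=O(1)$ are not covered by the decay estimate and must be handled by $\left|\left|\left|A_{j}^{*}A_{k}\right|\right|\right|\leq\left|\left|\left|A_{j}\right|\right|\right|\,\left|\left|\left|A_{k}\right|\right|\right|\leq C\Lambda^{2}$. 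Also, the regime you single out as subtle ($j_{x}\approx k_{x}$, $|j_{\xi}-k_{\xi}|$ large) is actually the easy one: there the $z$-gradient of the phase equals $\zeta-\eta$, which is bounded below by $c|j-k|$ on the whole support, so plain non-stationary phase in $z$ gives the decay directly, and taking $N\geq2n$ absorbs the $h^{-2n}$ prefactor for $h\leq h_{0}$, as you anticipated.
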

Une des principales applications de ce théorème de continuité est
l'inversion des opérateurs pseudo-différentiels : un symbole $a\in S\left(X,\left\langle z\right\rangle ^{m}\right)$
est elliptique en $(x_{0},\xi_{0})\in T^{*}X$ si et seulement si
$|a(x_{0},\xi_{0})|\neq0$.
\begin{thm}
Soit $m\in\mathbb{R}$ et $a\in S\left(X,\left\langle \xi\right\rangle ^{m}\right)$
elliptique sur $T^{*}X$, alors il existe un symbole $b\in S\left(X,\left\langle \xi\right\rangle ^{-m}\right)$
tels que : \[
\mathbf{O}_{p}^{w}(a)\circ\mathbf{O}_{p}^{w}(b)=I_{d}+R_{1}\; et\;\mathbf{O}_{p}^{w}(b)\circ\mathbf{O}_{p}^{w}(a)=I_{d}+R_{2}\]
où $R_{1},R_{2}$ sont des opérateurs linéaire continus de \textup{$L^{2}(\mathbb{R}^{n})$
}dans\textup{ $L^{2}(\mathbb{R}^{n})$} et vérifiant $|||R_{1}|||+|||R_{2}|||=O(h^{\infty})$.
\end{thm}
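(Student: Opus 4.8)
The strategy is the standard parametrix construction for elliptic (pseudo-)differential operators, carried out at the level of symbols and transported to operators via the Moyal product theorem (Th\'eor\`eme on $\mathbf{O}_p^w(a)\circ\mathbf{O}_p^w(b)=\mathbf{O}_p^w(a\star b)$ stated above). Since $a$ is elliptic on all of $T^*X$, the function $1/a$ makes sense as a smooth function, and one checks easily that $1/a\in S\bigl(X,\langle\xi\rangle^{-m}\bigr)$: each derivative $\partial_z^\alpha(1/a)$ is, by the Leibniz/Fa\`a di Bruno formula, a finite sum of terms $a^{-k-1}$ times products of derivatives $\partial^{\beta_i}a$, and the symbol estimates on $a$ together with the lower bound $|a|\geq c\langle\xi\rangle^{m'}$ coming from ellipticity (here one must be a little careful: ``elliptic on $T^*X$'' should be understood as $|a(z)|\geq c\langle z\rangle^{m}$ for $|z|$ large, plus nonvanishing everywhere, so that $1/a$ is globally a symbol of order $-m$) give the required bounds.

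First I would set $b_0:=1/a\in S\bigl(X,\langle\xi\rangle^{-m}\bigr)$ and compute, using the Moyal formula, $a\star b_0 = 1 + h\,r_1$ where $r_1\in S^1\bigl(X,1\bigr)$ collects all the terms with at least one factor of $h$ (the $j=0$ term is exactly $ab_0=1$). Then I would correct iteratively: seek $b_h = \sum_{j\geq 0} b_j h^j$ with $b_j\in S\bigl(X,\langle\xi\rangle^{-m}\bigr)$ so that $a\star b_h \sim 1$ to all orders in $h$. Expanding $a\star b_h$ in powers of $h$ and matching coefficients yields, at each order $N\geq 1$, an equation of the form $a\,b_N + (\text{expression in } a, b_0,\dots,b_{N-1}) = 0$, which is solved by dividing by $a$, i.e. $b_N = -b_0\cdot(\text{that expression})$; the already-established fact that $b_0$ is a symbol of order $-m$ guarantees $b_N$ is again of order $-m$. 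By the Borel resummation theorem quoted in the excerpt, there is a genuine symbol $b\in S\bigl(X,\langle\xi\rangle^{-m}\bigr)$, unique modulo $O(h^\infty)$, with $b\sim\sum b_j h^j$. By construction $a\star b - 1\in S^\infty := \bigcap_k S^k(X,1)$, i.e. its seminorms are $O(h^\infty)$, so setting $R_1 := \mathbf{O}_p^w(a\star b) - I_d = \mathbf{O}_p^w(a\star b - 1)$ and invoking the Calder\'on--Vaillancourt theorem gives $|||R_1||| = O(h^\infty)$. The left parametrix $b'$ with $b'\star a - 1\in S^\infty$ is constructed symmetrically, and then the usual algebra ($b' = b'\star(a\star b) + O(h^\infty) = (b'\star a)\star b + O(h^\infty) = b + O(h^\infty)$) shows $b$ itself works on both sides, giving $R_2$ with $|||R_2|||=O(h^\infty)$ as well.

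The main obstacle is bookkeeping rather than conceptual: one must verify that the ``expression in $a,b_0,\dots,b_{N-1}$'' appearing at order $h^N$ in the Moyal expansion genuinely lies in $S\bigl(X,\langle\xi\rangle^{-m}\bigr)$ (so that, after multiplying by the order $0$ symbol coming from $1/a$, one stays in the right class), and that the asymptotic series can be controlled uniformly enough to apply Borel resummation and then Calder\'on--Vaillancourt. This amounts to checking that the bilinear differential operators $\sum_{p}(\overleftarrow{\partial_{\xi_p}}\overrightarrow{\partial_{x_p}} - \overleftarrow{\partial_{x_p}}\overrightarrow{\partial_{\xi_p}})^j$ map $S(X,\langle\xi\rangle^{m_1})\times S(X,\langle\xi\rangle^{m_2})$ into $S(X,\langle\xi\rangle^{m_1+m_2-j})$ — a routine consequence of the Leibniz rule and the definition of the symbol classes, but the place where care is genuinely needed. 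Everything else is formal manipulation of asymptotic series plus the two operator-theoretic inputs (Moyal composition and Calder\'on--Vaillancourt) already available.
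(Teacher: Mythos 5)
Your parametrix construction is correct and is exactly the standard argument used by the references the paper cites for this statement (Dimassi--Sj\"ostrand, Martinez) --- the paper itself gives no proof of this theorem --- and you rightly flag that ``elliptique sur $T^{*}X$'' must be read as a uniform lower bound $|a(z)|\geq c\left\langle z\right\rangle ^{m}$ (not mere pointwise nonvanishing) in order for $1/a$ to lie in $S\left(X,\left\langle \xi\right\rangle ^{-m}\right)$. One minor correction: with the paper's classes $S^{k}\left(X,\left\langle z\right\rangle ^{m}\right)$, differentiation does not improve the weight, so the Moyal bidifferential operators map $S\left(X,\left\langle \xi\right\rangle ^{m_{1}}\right)\times S\left(X,\left\langle \xi\right\rangle ^{m_{2}}\right)$ into $S\left(X,\left\langle \xi\right\rangle ^{m_{1}+m_{2}}\right)$ rather than gaining a factor $\left\langle \xi\right\rangle ^{-j}$ at order $h^{j}$; this is harmless for your induction and for the final application of Calder\'on--Vaillancourt, since the weights $\left\langle \xi\right\rangle ^{m}$ and $\left\langle \xi\right\rangle ^{-m}$ already cancel, which is all that is needed.
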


\section{Analyse microlocale}

Il semble que l'analyse microlocale prenne naissance dans les années
70 pour des applications en équations aux dérivés partielles. Aujourd'hui
l'analyse microlocale est présente dans beaucoup de mathématiques,
comme la topologie, la géométrie analytique,... 

Le principe moral de l'analyse microlocale est alors d'utiliser des
variétés symplectique pour faire de l'analyse : pour des objets vivants
naturellement sur une variété quelconque $X$, on peut aussi les faire
vivre sur la variété $T^{*}X$. 

Donnons ici quelques éléments d'analyse microlocale, pour plus de
détails voir par exemple\textbf{ {[}118{]}, {[}119{]}}, \textbf{{[}46{]}}
ou \textbf{{[}73{]}}.

\subsection{Fonctions admissibles}

Pour $h_{0}>0$ fixé, l'ensemble\[
A:=\left\{ \lambda(h)\in\mathbb{C}^{\left]0,h_{0}\right]},\,\exists N\in\mathbb{Z},\;\left|\lambda(h)\right|=O(h^{-N})\right\} \]
est un anneau commutatif pour les opérations usuelles sur les fonctions.
On voit aussi sans peine que \[
I:=\left\{ \lambda(h)\in A,\,\lambda(h)=O(h^{\infty})\right\} \]
 est un idéal bilatère de $A$, on définit alors l'anneau $\mathbb{C}_{h}$
des constantes admissibles comme étant l'anneau quotient $A/I$.

On peut alors définir le $\mathbb{C}_{h}$-module des fonctions admissibles
:
\begin{defn}
L'ensemble $\mathcal{A}_{h}(X)$ des fonctions admissibles sur $X$
est l'ensemble des distributions $u_{h}\in\mathcal{D}^{\prime}(X)$
tels que pour tout opérateur pseudo-différentiel $P_{h}$ dont le
symbole dans une carte locale est a support compact\[
\exists N\in\mathbb{Z},\;\left\Vert P_{h}u_{h}\right\Vert _{L^{2}(X)}=O(h^{N}).\]

\end{defn}
L'ensemble $\mathcal{A}_{h}(X)$ est un $\mathbb{C}_{h}$-module pour
les lois usuelles des fonctions. Un premier fait important est que
par le théorème de Calderon-Vaillancourt, on a l'inclusion : $L^{2}(X)\subset\mathcal{A}_{h}(X).$
\begin{example}
Les fonctions WKB%
\footnote{Pour Wentzel, Kramers et Brillouin.%
} de la forme : \[
u_{h}(x)={\displaystyle \alpha(x)e^{i\frac{S(x)}{h}}}\]
$S$ étant une fonction réelle $\mathcal{C}^{\infty}$, sont des fonctions
admissibles stables par l'action d'un opérateur pseudo-différentiel.
\end{example}

\subsection{Micro-support et microfonctions}

Lorsque on regarde une équation du type $P_{h}u_{h}=0$, l'étude dans
$T^{*}X$ de la fonction symbole $p$ de l'opérateur pseudo-différentiel
$P_{h}$ permet de localiser les singularités de la solution $u_{h}$
grâce a la notion de micro-support. Historiquement la notion de micro-support
a été introduite par Sato et Hörmander. On va donner une définition
proche de celle de Hörmander : a tout élément $u_{h}$ du $\mathbb{C}_{h}$-module
des fonctions admissibles est associé un sous-ensemble de $T^{*}X$,
cet ensemble, nommé micro-support%
\footnote{Ou front d'ondes.%
} décrit la localisation de la fonction $u_{h}$ dans l'espace des
phases.
\begin{defn}
Soit $u_{h}\in\mathcal{A}_{h}(X)$, on dira que $u_{h}$ est négligeable
au point $m\in T^{*}X$, si et seulement s'il existe $P_{h}$ un opérateur
pseudo-différentiel elliptique en $m$ tels que :\[
\left\Vert P_{h}u_{h}\right\Vert _{L^{2}(X)}=O(h^{\infty}).\]
On définit alors $MS(u_{h})$, le micro-support de $u_{h}$ comme
le complémentaire dans $T^{*}X$ de l'ensemble des points $m\in T^{*}X$
où $u_{h}$ est négligeable.
\end{defn}
Moralement le micro-support de $u_{h}$ est le complémentaire de l'ensemble
des directions où, a une variante prés, la transformée de Fourier
de $u_{h}$ est à décroissance rapide. Parmi les propriétés liées
au micro-support nous avons que si $P_{h}$ est un opérateur pseudo-différentiel
de symbole principal $p$ alors on a l'implication : \[
P_{h}u_{h}=O(h^{\infty})\Rightarrow MS(u_{h})\subset p^{-1}(0).\]
Donc si par exemple $P_{h}$ est un opérateur de symbole principal
$p$, $\lambda$ un scalaire, et si $u_{h}$ est une fonction non
nulle telle que : $\left(P_{h}-\lambda I_{d}\right)u_{h}=O(h^{\infty})$
alors $MS(u_{h})\subset p^{-1}(\lambda).$ Ceci est une propriété
fondamentale de l'analyse microlocale: elle donne une localisation
des fonctions propres dans l'espace des phases.
\begin{example}
Pour une fonction WKB : $u_{h}(x)={\displaystyle {\displaystyle \alpha(x)e^{i\frac{S(x)}{h}}}}$
on a : \[
MS(u_{h})=\left\{ \left(x,dS(x)\right),\alpha(x)\neq0\right\} .\]
\end{example}
\begin{defn}
Soient $u_{h},v_{h}\in\mathcal{A}_{h}(X)^{2}$, on dira que $u_{h}=v_{h}+O(h^{\infty})$
sur un ouvert $U\subset T^{*}X$ si et seulement si :\[
MS(u_{h}-v_{h})\cap U=\textrm{Ø}.\]

\end{defn}
Avec les propriétés du micro-support, on peut montrer que pour tout
ouvert $U$ de $T^{*}X$, l'ensemble $\left\{ u_{h}\in\mathcal{A}_{h}(X)/MS(u_{h})\cap U=\textrm{Ø}\right\} $
est un $\mathbb{C}_{h}-$sous-module de $\mathcal{A}_{h}(X)$, on
peut alors définir l'espace des micro-fonctions :
\begin{defn}
Soit $U$ un ouvert non vide de $T^{*}X$, on définit l'espace des
micro-fonctions sur $U$ comme étant le $\mathbb{C}_{h}-$module quotient:\[
\mathcal{M}_{h}(U):=\mathcal{A}_{h}(X)/\left\{ u_{h}\in\mathcal{A}_{h}(X),\, MS(u_{h})\cap U=\textrm{Ø}\right\} .\]

\end{defn}
Les opérateurs pseudo-différentiels agissent sur $\mathcal{M}_{h}(U)$,
en effet : pour tout opérateur pseudo-différentiel $P_{h}$ on a :
\[
MS(P_{h}u_{h})\subset MS(u_{h})\]
et ainsi $P_{h}\left(\mathcal{M}_{h}(U)\right)\subset\mathcal{M}_{h}(U).$

\subsection{Analyse microlocale et faisceaux}

Le langage le plus adapté à l'analyse microlocale est le langage des
faisceaux. Sans rentrer dans les détails, en voici le principe : a
tout triplet $\left(P_{h},\lambda,U\right)$ où $P_{h}$ est un opérateur
pseudo-différentiel, $\lambda$ un scalaire de l'anneau $\mathbb{\mathbb{C}}_{h}$
et $U$ un ouvert non vide de $T^{*}X$, on peut associer l'ensemble
$\mathcal{L}\left(P_{h},\lambda,U\right)$ des microfonctions $u_{h}$
solutions dans l'ouvert $U$ de $\left(P_{h}-\lambda I_{d}\right)u_{h}=O(h^{\infty})$.
L'ensemble $\mathcal{L}\left(P_{h},\lambda,U\right)$ est un $\mathbb{C}_{h}$-module,
et si $\Omega$ désigne un ensemble d'indices quelconque, la famille
d'ensembles $\left\{ \mathcal{L}\left(P_{h},\lambda,U_{x}\right),\, x\in\Omega\right\} $
est un faisceau au dessus de ${\displaystyle \bigcup_{x\in\Omega}}U_{x}$.
En effet toute solution peut être restreinte sur des ouverts plus
petits d'une unique manière, et deux solutions $u_{h}$ définie sur
un ouvert $U_{x}$ et $v_{h}$ définie sur un autre ouvert $U_{y}$
et telles que $u_{h}=v_{h}$ sur l'ouvert $U_{x}\cap U_{y}$ peuvent
être misent ensemble pour former une solution globale sur l'ouvert
$U_{x}\cup U_{y}$. Ce faisceau est supporté%
\footnote{Au sens du micro-support.%
} sur l'ensemble $p^{-1}(\lambda)\subset T^{*}X$.

\subsection{Théorème d'Egorov et opérateurs intégraux de Fourier}

Pour finir donnons le théorème d'Egorov qui permet de définir rapidement
la notion d'opérateur intégral de Fourier, voir par exemple \textbf{{[}57{]},
{[}46{]}} :
\begin{thm}
\textbf{(Egorov)} : Soient $\left(T^{*}X,d\alpha\right)$ et $\left(T^{*}Y,d\beta\right)$
deux variétés symplectomorphe : il existe $\chi$ un symplectomorphisme
de $T^{*}X$ dans $T^{*}Y$. On supposera que $\chi$ est exact :
$\chi^{*}\beta-\alpha$ est une 1-forme exacte sur $X$. Alors il
existe $\widetilde{\chi}$ un morphisme de $\mathbb{C}_{h}$-module
de $\mathcal{M}_{h}(X)$ dans $\mathcal{M}_{h}(Y)$ inversible tel
que pour tout $a\in\mathcal{M}_{h}(Y)$, en notant par $\hat{a}=\mathbf{O}_{p}^{w}(a)$,
l'opérateur :\[
B=\widetilde{\chi}^{-1}\circ\hat{a}\circ\widetilde{\chi}\]
est un opérateur pseudo-différentiel sur $\mathcal{M}_{h}(X)$, et
dont le symbole principal est donné par $a_{0}\circ\chi$, $a_{0}$
étant le symbole principal de $\hat{a}$. On dit que $\widetilde{\chi}$
est un opérateur intégral de Fourier associé à $\chi$.
\end{thm}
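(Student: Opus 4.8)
The plan is to realise $\widetilde\chi$ as a semiclassical Fourier integral operator whose canonical relation is the (twisted) graph of $\chi$, and then to read the statement off from the symbol calculus of such operators. Since microfunctions are local objects on $T^{*}X$ (resp.\ $T^{*}Y$) and everything is taken modulo $O(h^{\infty})$, it is enough to build $\widetilde\chi$ microlocally near a point $m_{0}\in T^{*}X$ and its image $\chi(m_{0})\in T^{*}Y$ and then to patch the local pieces; the exactness hypothesis $\chi^{*}\beta-\alpha=dS$ on $X$ is exactly what lets one give the phases of the local models a globally coherent normalisation (otherwise the kernel would acquire an undetermined factor $e^{iS_{0}/h}$ with $S_{0}$ a period of $\chi^{*}\beta-\alpha$, which is not a well-defined element of $\mathbb{C}_{h}$ unless that period vanishes). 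First I would recall the structural fact, going back to H\"ormander, that locally every symplectomorphism is a finite composition of \emph{elementary} ones: symplectic lifts of diffeomorphisms of the base, maps $(x,\xi)\mapsto(x,\xi+d\phi(x))$, and partial Fourier transforms $(x_{j},\xi_{j})\mapsto(\xi_{j},-x_{j})$. Each elementary symplectomorphism admits a generating function $\psi(x,\eta)$ with invertible mixed Hessian $\partial^{2}\psi/\partial x\,\partial\eta$, and is quantised by the oscillatory integral
\[
(\widetilde\chi_{0}u)(y)=\frac{1}{(2\pi h)^{n}}\int\int e^{\frac{i}{h}(\psi(x,\eta)-y\cdot\eta)}\,b(x,\eta;h)\,u(x)\,dx\,d\eta ,
\]
with amplitude $b\in S^{0}$ equal to $1$ to leading order; composing the quantisations of the elementary factors yields the candidate $\widetilde\chi$.

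Next I would check that $\widetilde\chi$ acts correctly on the relevant spaces. Boundedness $L^{2}(X)\to L^{2}(Y)$, hence stability of $\mathcal{A}_{h}$, follows from a Calderon--Vaillancourt-type estimate after a $TT^{*}$ reduction, or directly from non-stationary phase once $\partial^{2}\psi/\partial x\,\partial\eta$ is invertible. Compatibility with micro-supports, $MS(\widetilde\chi u)\subset\chi(MS(u))$, comes from integration by parts in the oscillatory integral: away from the graph of $\chi$ the total phase has no critical point in the fibre variables, so $\widetilde\chi u$ is negligible there. Invertibility is obtained by quantising the symplectomorphism $\chi^{-1}$ the same way --- its generating function is obtained from $\psi$ by a Legendre transform --- to get a Fourier integral operator $J$, and showing by stationary phase that $J\circ\widetilde\chi$ and $\widetilde\chi\circ J$ equal $I_{d}$ modulo $O(h^{\infty})$: the composed phase has a single critical point, non-degenerate precisely because the mixed Hessians are invertible, and the stationary phase expansion contracts the integral to the identity. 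This realises $\widetilde\chi$ as an invertible morphism of $\mathbb{C}_{h}$-modules $\mathcal{M}_{h}(X)\to\mathcal{M}_{h}(Y)$, with $\widetilde\chi^{-1}=J$ modulo $O(h^{\infty})$.

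Finally I would run the Egorov computation. For $a\in\mathcal{M}_{h}(Y)$ and $\hat a=\mathbf{O}_{p}^{w}(a)$, the Schwartz kernel of $B=\widetilde\chi^{-1}\circ\hat a\circ\widetilde\chi$ is an oscillatory integral whose total phase, after eliminating the variables coming from the two factors $\widetilde\chi^{\pm1}$, has critical set cut out exactly by the equations defining $\chi$. Since $\hat a$ is a pseudodifferential operator its canonical relation is the diagonal of $T^{*}Y\times T^{*}Y$, and conjugation by $\widetilde\chi$ transports it to the diagonal of $T^{*}X\times T^{*}X$; hence $B$ is a pseudodifferential operator, not a genuine Fourier integral operator. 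Applying stationary phase in the fibre variables and collecting the leading term writes $B=\mathbf{O}_{p}^{w}(c)$ with $c=a_{0}\circ\chi+O(h)$ in the symbol class matching the weight of $a$, so the principal symbol of $B$ is $a_{0}\circ\chi$, as asserted; linearity and the intertwining $\hat a\circ\widetilde\chi=\widetilde\chi\circ B$ are then immediate.

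The main obstacle is the first step: a general, even local, symplectomorphism need not possess a generating function of the naive form $\psi(x,\eta)$, so one must first establish the decomposition into elementary symplectomorphisms and then verify that the resulting composition of Fourier integral operators is well behaved --- associativity of the composition, the exact symbol calculus under composition, and the clean-intersection/transversality conditions needed at each stage for the stationary phase method to apply. Handling the cohomological normalisation of the phases (where the exactness of $\chi$ genuinely enters) so that the local models glue into a global operator on $\mathcal{M}_{h}(X)$, together with the book-keeping of the Maslov-type factors, is the delicate point that makes the argument more than a routine manipulation of oscillatory integrals.
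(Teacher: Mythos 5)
The paper gives no proof of this theorem: it is quoted as a standard result, with the reader sent to the references on Egorov's theorem and on semiclassical microlocal analysis ({[}57{]}, {[}46{]}), so there is no internal argument to compare yours with. Your outline is exactly the classical route of those references: quantise $\chi$ locally by oscillatory integrals attached to generating functions, check $L^{2}$-boundedness and the transport of micro-supports so that one really gets a morphism of $\mathbb{C}_{h}$-modules $\mathcal{M}_{h}(X)\rightarrow\mathcal{M}_{h}(Y)$, obtain invertibility by quantising $\chi^{-1}$ and applying stationary phase to the composition, and finally read off the principal symbol $a_{0}\circ\chi$ of $\widetilde{\chi}^{-1}\circ\hat{a}\circ\widetilde{\chi}$ from the stationary-phase expansion, the composed canonical relation being the diagonal of $T^{*}X\times T^{*}X$. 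Nothing in this plan is wrong, and your identification of where exactness enters (fixing the otherwise undetermined factor $e^{iS_{0}/h}$ in the gluing of local phases; note in passing that $\chi^{*}\beta-\alpha$ is a $1$-form on $T^{*}X$, the statement's \og sur $X$ \fg{} being a slip) is correct.

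That said, what you have written is a programme rather than a proof: the three points you yourself defer --- the local reduction to generating functions (the precise statement being that after composing with a partial Fourier transform the graph of $\chi$ projects diffeomorphically onto the mixed variables $(x,\eta)$, which is what produces $\psi(x,\eta)$ with invertible mixed Hessian), the composition theorem for semiclassical Fourier integral operators with its stationary-phase symbol calculus, and the cohomological and Maslov bookkeeping needed to glue the local quantisations into one operator on $\mathcal{M}_{h}(X)$ --- are precisely the substance of the theorem, and none of them is carried out. If you want a genuinely shorter argument, note that when $\chi$ is the time-one map of an exact symplectic isotopy one can instead define $\widetilde{\chi}$ by solving a semiclassical evolution equation and prove the conjugation statement by differentiating $t\mapsto\widetilde{\chi}_{t}^{-1}\circ\hat{a}\circ\widetilde{\chi}_{t}$, using only the pseudodifferential bracket calculus $\left[\widehat{\varphi},\widehat{\psi}\right]=\frac{h}{i}\widehat{\left\{ \varphi,\psi\right\} }+O(h^{2})$ recalled in the paper; this avoids the FIO composition machinery entirely, at the price of the isotopy hypothesis, whereas your generating-function construction covers the general exact case stated here.
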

En fait il y a toute une théorie sur les opérateurs intégraux de Fourier
(voir \textbf{{[}53{]}} et {[}\textbf{54{]}}), ces derniers on en
effet une représentation intégrale. Leurs noyaux sont cependant plus
compliqués et plus délicats à manipuler que les noyaux des opérateurs
pseudo-différentiels.

\section{Vers la quantification par déformation}

Avec la théorie des opérateurs pseudo-différentiels, on dispose d'une
quantification sur $\mathbb{R}^{2n}$ ou sur $T^{*}X$ . Cette quantification
n'est en fait pas idéale au sens où l'axiome \textbf{(ii)} est vérifié
avec un reste en $O(h^{2}).$ Donc si accepte de laisser tendre $h$
vers $0$ on se rappoche d'une quantification idéale.

\subsection{Cas de $T^{*}X$ }

Sur la variété symplectique $\mathbb{R}^{2n}$ ou sur $T^{*}X$ la
théorie des opérateurs pseudo-différentiels fournit une quantification
(tpas idéale mais presque). De manière concrète pour toute fonction
symbole $a$ {}``convenable'' on l'existence de l'opérateur linéaire
$\hat{a}$ définit par la quantification de Weyl :\[
\hat{a}:=\mathbf{O}_{p}^{w}(a).\]
Dans l'exemple 6.2 on a vu que les axiomes \textbf{(i)} et \textbf{(iii)}
de la quantification sont vérifiés; l'axiome \textbf{(iv) }est lui
aussi vrai. Par contre l'axiome\textbf{ (ii) }pose problème\textbf{
:} généralement pour tout couple de symboles $\varphi,\psi$ on a
que : \[
\mathbf{O}_{p}^{w}(\varphi)\circ\mathbf{O}_{p}^{w}(\psi)\neq\mathbf{O}_{p}^{w}(\varphi\psi)\]
mais le théorème 6.4 assure l'existence d'un unique symbole $\pi$
donné par la formule de Moyal :\[
\pi=\sum_{j=0}^{\infty}\frac{h^{j}}{j!(2i)^{j}}\varphi\left(\sum_{p=1}^{n}\overleftarrow{\partial_{\xi_{p}}}\overrightarrow{\partial_{x_{p}}}-\overleftarrow{\partial_{x_{p}}}\overrightarrow{\partial_{\xi_{p}}}\right)^{j}\psi\]
vérifiant la relation :\[
\mathbf{O}_{p}^{w}(\varphi)\circ\mathbf{O}_{p}^{w}(\psi)=\mathbf{O}_{p}^{w}(\pi).\]
On définit alors le produit de Moyal des symboles\textit{ }$\varphi$
et $\psi$ par :\[
(\varphi,\psi)\mapsto\varphi\star\psi:=\pi\]
soit encore : \[
\widehat{\varphi\star\psi}=\widehat{\varphi}\widehat{\psi}.\]
Et comme d'aprés la formule de Moyal : $\varphi\star\psi:=\varphi\psi+O(h)$,
on en déduit alors la formule :\[
\widehat{\varphi\star\psi}=\widehat{\varphi\psi}+O(h).\]
Par conséquent si on calcule le commutateur de deux symboles quantifiés
\foreignlanguage{english}{$\widehat{\varphi}$} et $\widehat{\psi}$
on obtient :\[
\left[\widehat{\varphi},\widehat{\psi}\right]=\widehat{\varphi}\widehat{\psi}-\widehat{\psi}\widehat{\varphi}\]
\[
=\widehat{\varphi\star\psi}-\widehat{\psi\star\varphi}\]
\[
=\widehat{\varphi\star\psi-\psi\star\varphi}\]
et comme par la formule de Moyal $\varphi\star\psi-\psi\star\varphi=\frac{h}{i}\left\{ \varphi,\psi\right\} +O(h^{2})$
on arrive la formule :\[
\left[\widehat{\varphi},\widehat{\psi}\right]=\frac{h}{i}\widehat{\left\{ \varphi,\psi\right\} }+O(h^{2}).\]
On vient donc de montrer que l'axiome\textbf{ (ii) }est vrai modulo
$O(h^{2})$. 

En fait, on peut voir le produit non-commutatif $\star$ comme définit
sur un anneau de fonctions en oubliant la représentation par les opérateurs
pseudo-différentiels. La quantification apparaît donc comme une déformation
du commutatif vers le non-commutatif, la trace de la non-commutativité
étant donné par le crochet $\left\{ .,.\right\} $.

\subsection{Cas général}

Le programme de quantification par déformation est initié par Bayer,
Flato,... il consiste à essayer de définir sur chaque variété symplectique
un produit $\star$ analogue à celui de Moyal. Localement cela est
facile (on se ramène à $\mathbb{R}^{2n}$ par le théorème de Darboux),
mais bien évidement la difficulté de ce programme est le passage du
local au global. Concrètement quand on dispose d'un produit $\star$
sur une variété $M$, on peut quantifier : on associe à chaque fonction
$u\in\mathcal{C}^{\infty}(M)$ un opérateur $\widehat{u}$ définit
par :\[
\widehat{u}(v):=u\star v.\]
Le problème des produits $\star$ sur des variétés est d'abord celui
de leurs existences et de leurs classifications. Parmis les résultats
majeurs citons d'abord les travaux de De Wilde-Lecompte\textbf{ {[}51{]}}
qui en 1983 montrent que toute variété symplectique admet un produit
$\star$. En 1997, Kontesvitch\textbf{ {[}88{]} }montre que toute
variété de Poisson%
\footnote{C'est une variété avec une structure de poisson sur l' algèbre des
fonctions.%
} admet un produit $\star$.

\section{Les systèmes complètement intégrables }

Pour finir ce chapitre on va donner les principaux résultats connus
sur les systèmes complètement intégrables symplectiques et semi-classique.

\subsection{Définitions}

En physique hamiltonienne un système intégrable est un système possédant
un nombre suffisant de constantes de mouvement indépendantes; mathématiquement
on a la définition suivante :
\begin{defn}
Un système intégrable classique est la donnée d'une variété symplectique
$\left(M,\omega\right)$ de dimension $2n$ et de $n$ fonctions $\left(f_{1},...,f_{n}\right)$
de l'algèbre $\mathcal{C}^{\infty}(M)$ telles que les différentielles
$\left(df_{i}(x)\right)_{i=1,...,n}$ sont libres presque partout
sur $M$; et telles que pour tout indices $i,j$ on ait $\left\{ f_{i},f_{j}\right\} =0$.
On définit alors l'application moment classique associée :\[
\mathbf{f}\,:\left\{ \begin{array}{cc}
M\rightarrow\mathbb{R}^{n}\\
\\x\mapsto\left(f_{1}(x),...,f_{n}(x)\right).\end{array}\right.\]
\end{defn}
\begin{example}
L'oscillateur harmonique classique : sur la variété symplectique $\mathbb{R}^{2}$
on note par $p$ l'oscillateur harmonique : $\mathbf{p}(x,\xi)=p(x,\xi)=\left(x^{2}+\xi^{2}\right)/2$.
En effet $dp=\left(\begin{array}{c}
x\\
\xi\end{array}\right)$ est non nul presque partout, donc libre presque partout. \end{example}
\begin{defn}
Un système complètement intégrable semi-classique sur une variété
$X$ est la donné de $n$ opérateurs pseudo-différentiels $P_{1},...,P_{n}$
sur $L^{2}(X)$ tels que pour tout indices $i$ et $j$ on ait $\left[P_{i},P_{j}\right]=0$
et dont les symboles principaux forment un système complètement intégrable
symplectique sur $M:=T^{*}X$. On notera par $\mathbf{P}:=(P_{1},...,P_{n})$
l'application moment quantique et par $\mathbf{p}:=(p_{1},..,.p_{n})$
l'application moment classique associée aux symboles principaux de
$\mathbf{P}.$ 
\end{defn}
En géométrie symplectique et en anaylse semi-classique, l'application
moment joue un rôle important pour classifier les systèmes complètement
intégrables. En analyse semi-classique on travaille avec le spectre
(spectre exact et semi-classique) d'un système complètement intégrable
:
\begin{defn}
Le spectre exact $\sigma(P_{1},...,P_{n})$ d'un système complètement
intégrable semi-classique $(X,\mathbf{P})$, ou encore spectre conjoint
exact des opérateurs pseudo-différentiels $P_{1},...,P_{n}$, est
définit par :\[
\sigma(P_{1},...,P_{n})\]
\[
:=\left\{ \left(\lambda_{1},\ldots,\lambda_{n}\right)\in\mathbb{R}^{n},\,\exists u\in L^{2}(\mathbb{R}^{n}),\, u\neq0;P_{j}u=\lambda_{j}u\right\} .\]

\end{defn}
La définition semi-classique est :
\begin{defn}
Le spectre semi-classique $\Sigma_{h}(P_{1},...,P_{n})$ d'un système
complètement intégrable semi-classique $(X,\mathbf{P})$, ou encore
spectre conjoint des opérateurs pseudo-différentiels $P_{1},...,P_{n}$
est définit par :\[
\Sigma_{h}(P_{1},...,P_{n})\]
\[
:=\left\{ \left(\lambda_{1}(h),\ldots,\lambda_{n}(h)\right)\in\mathbb{R}^{n},\,\exists u_{h}\in L^{2}(\mathbb{R}^{n}),\, u_{h}\neq0;\,\left(P_{j}-\lambda_{j}(h)I_{d}\right)u_{h}=O(h^{\infty})\right\} \]
$I_{d}$ étant l'opérateur identité. 
\end{defn}
On appelle multiplicité microlocale de $E_{h}$ la dimension du $\mathbb{C}_{h}$-module
des solutions microlocales de cette équation.

Moralement le spectre semi-classique (ou microlocal) correspond aux
valeurs propres approchées avec une précision d'ordre $O(h^{\infty})$.
Le lien précis entre spectre exact et semi-classique est donné par
la \textbf{{[}119{]}} :
\begin{prop}
Sur un compact $K$ de $\mathbb{R}$, le spectre semi-classique conjoint
$\Sigma_{h}(P_{1},...,P_{n})$ et le spectre exact $\sigma(P_{1},...,P_{n})$
sont liés par \textbf{:\[
\Sigma_{h}(P_{1},...,P_{n})=\sigma(P_{1},...,P_{n})\cap K+O(h^{\infty})\]
}au sens où si $\lambda_{h}\in\Sigma_{h}(P_{1},...,P_{n})$ alors
il existe $\mu_{h}\in\sigma(P_{1},...,P_{n})\cap K$ tel que $\lambda_{h}=\mu_{h}+O(h^{\infty})$;
et si $\mu_{h}\in\sigma(P_{1},...,P_{n})\cap K$ alors il existe $\lambda_{h}\in\Sigma_{h}(P_{1},...,P_{n})$
tel que $\mu_{h}=\lambda_{h}+O(h^{\infty}).$ De plus pour toute famille
$\lambda_{h}$ ayant une limite finie $\lambda\in K$ lorsque $h\rightarrow0$,
si la multiplicité microlocale de $\lambda_{h}$ est bien définie
et est finie, alors elle est égale pour $h$ assez petit au rang du
projecteur spectral conjoint des $P_{j}$ sur une boule de diamètre
$O(h^{\infty})$ centrée autour de $\lambda_{h}$.\end{prop}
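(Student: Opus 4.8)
The plan is to prove the two inclusions (in the sense of $O(h^\infty)$-approximation) separately, and then treat the multiplicity statement via a standard spectral-projector argument. The core tool throughout is the microlocal machinery built up earlier: the notion of admissible function, micro-support, and the fact (from the elliptic parametrix construction, Theorem on inversion of pseudo-differential operators) that an operator elliptic on the complement of $p^{-1}(\lambda)$ is microlocally invertible there, so that any quasimode is microlocalized on the joint characteristic variety $\bigcap_j p_j^{-1}(\lambda_j)$.

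\textbf{Step 1 (exact spectrum $\Rightarrow$ semiclassical spectrum).} Let $\mu_h\in\sigma(P_1,\dots,P_n)\cap K$, with a joint eigenvector $u_h$, $P_ju_h=\mu_{j,h}u_h$. Then trivially $(P_j-\mu_{j,h}I_d)u_h=0=O(h^\infty)$, so $\mu_h\in\Sigma_h(P_1,\dots,P_n)$; one may take $\lambda_h=\mu_h$. This direction is immediate and requires no real work.

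\textbf{Step 2 (semiclassical spectrum $\Rightarrow$ exact spectrum).} This is the substantive direction. Let $\lambda_h\in\Sigma_h$, so there is $u_h\in L^2$, $\|u_h\|=1$, with $(P_j-\lambda_{j,h}I_d)u_h=O(h^\infty)$ for each $j$. Since the commuting family $\{P_j\}$ has joint principal symbol forming an integrable system, the joint operator is (locally near $p^{-1}(\lambda)$, for $\lambda$ the limit of $\lambda_h$ in the compact $K$) elliptic in the sense that $\sum_j(p_j-\lambda_j)^2$ controls it; combining with the a priori bound that $u_h$ is compactly microlocalized (the $\lambda_h$ stay in a compact set, so standard elliptic estimates confine $MS(u_h)$ to a compact part of $T^*X$ over a relatively compact region), one deduces that $\mathrm{dist}(\lambda_h,\sigma(P_1,\dots,P_n))=O(h^\infty)$. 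The clean way to phrase this: form the nonnegative self-adjoint operator $Q_h:=\sum_j(P_j-\lambda_{j,h}I_d)^*(P_j-\lambda_{j,h}I_d)$; then $\langle Q_hu_h,u_h\rangle=O(h^\infty)$ while $\|u_h\|=1$, so $0$ is in the $O(h^\infty)$-approximate spectrum of $Q_h$, hence (self-adjointness, spectral theorem) $\mathrm{dist}(0,\sigma(Q_h))=O(h^\infty)$; picking a point of $\sigma(Q_h)$ within $O(h^\infty)$ of $0$ and using the joint spectral measure of the commuting family produces $\mu_h\in\sigma(P_1,\dots,P_n)$ with $|\mu_h-\lambda_h|=O(h^\infty)$. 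One must check $\mu_h$ lands in (a neighborhood of) $K$, which follows because $\lambda_h\in K$ and $K$ is compact so we may enlarge it slightly. \textbf{The hard part} is making precise, and uniform in $h$, the passage from ``$Q_h$ has approximate kernel'' to ``some joint eigenvalue of $(P_1,\dots,P_n)$ is $O(h^\infty)$-close to $\lambda_h$'': one needs the joint spectral theorem for the commuting self-adjoint family, a uniform control of the support of the joint spectral measure (provided by the confining/ellipticity hypotheses on the symbols and the Calderón--Vaillancourt bounds), and care that the bijection can be organized consistently — this is where one invokes \textbf{{[}119{]}} for the bookkeeping.

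\textbf{Step 3 (the bijection and multiplicity).} To upgrade the two inclusions to a genuine bijection $\lambda_h\mapsto\mu_h$ respecting $O(h^\infty)$-closeness, restrict attention to $h$ small and to a slightly shrunk compact; the eigenvalue clusters of the exact joint spectrum in $K$ are, for $h$ small, separated at scale $\gg h^\infty$ except within each cluster, and one matches each semiclassical quasi-eigenvalue to the exact cluster it shadows. For the final multiplicity assertion: suppose $\lambda_h\to\lambda\in K$ and the microlocal multiplicity of $\lambda_h$ is finite, say equal to $d$. Let $\Pi_h$ be the joint spectral projector of $(P_1,\dots,P_n)$ onto the ball of radius $r_h=O(h^\infty)$ about $\lambda_h$. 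On one hand, every microlocal solution of $(P_j-\lambda_{j,h})u_h=O(h^\infty)$ is, modulo $O(h^\infty)$, in the range of $\Pi_h$ (apply functional calculus: $\|(I-\Pi_h)u_h\|^2\le r_h^{-2}\langle Q_hu_h,u_h\rangle=O(h^\infty)$ after rescaling appropriately, so the microlocal solution space injects into $\mathrm{Ran}\,\Pi_h$), giving $\mathrm{rank}\,\Pi_h\ge d$. On the other hand, each exact joint eigenfunction in $\mathrm{Ran}\,\Pi_h$ is itself a microlocal solution, giving $\mathrm{rank}\,\Pi_h\le d$; hence equality for $h$ small. The delicate point here is choosing the radius of the ball consistently — large enough to capture the whole exact cluster that shadows $\lambda_h$, small enough to exclude neighboring clusters — which is again the content one borrows from \textbf{{[}119{]}}; given that, the rank comparison is a routine two-sided estimate with the spectral projector.
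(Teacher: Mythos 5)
The paper itself gives no proof of this proposition: it is quoted as a standard result with a pointer to Vu Ngoc \textbf{[119]}, so there is no argument to compare with line by line. Your sketch does follow the route taken in that reference (quasimodes from exact eigenfunctions for the easy inclusion; for the converse, the lower bound $\sum_{j}\left\Vert (P_{j}-\lambda_{j,h})u_{h}\right\Vert ^{2}\geq\mathrm{dist}\left(\lambda_{h},\mathrm{supp}\, E\right)^{2}\left\Vert u_{h}\right\Vert ^{2}$ furnished by the joint spectral measure $E$ of the commuting self-adjoint family; comparison with the joint spectral projector for the multiplicities), and as an outline it is sound. Two points, however, are genuine gaps rather than bookkeeping. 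First, your Step 2 only produces a point of the \emph{support of the joint spectral measure} at distance $O(h^{\infty})$ from $\lambda_{h}$; to land in $\sigma(P_{1},\dots,P_{n})$ as defined in the paper (a joint \emph{eigenvalue}) you need the joint spectrum to be discrete in a neighbourhood of $K$, which is an implicit hypothesis of the proposition and must be invoked explicitly; relatedly, the definition of $\Sigma_{h}$ is only non-vacuous if one normalises $\left\Vert u_{h}\right\Vert =1$ (or bounds it below), which you use silently.

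Second, in Step 3 the inequality $\mathrm{rank}\,\Pi_{h}\leq d$ is not the routine half. Knowing that each exact eigenfunction in $\mathrm{Ran}\,\Pi_{h}$ is a microlocal solution does not yet bound the rank by the microlocal multiplicity: you must check that an orthonormal basis of $\mathrm{Ran}\,\Pi_{h}$ stays $\mathbb{C}_{h}$-linearly independent in the module of microfunctions, i.e.\ that no combination with coefficients not all $O(h^{\infty})$ becomes microlocally negligible on the relevant open set. This is exactly where the microlocalisation of the exact eigenfunctions enters: their microsupport is contained in the compact fibre $\mathbf{p}^{-1}(\lambda)$ (ellipticity/properness of the symbols), so a combination negligible near that fibre has empty microsupport, hence is $O(h^{\infty})$ in $L^{2}$, contradicting orthonormality. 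Symmetrically, the injection of the microlocal solution module into $\mathrm{Ran}\,\Pi_{h}$ requires representing the microlocal solutions by admissible $L^{2}$ families with norm bounded below before applying your projector estimate. With these two ingredients made explicit, your argument is the standard one from \textbf{[119]}.
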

\begin{example}
L'oscillateur harmonique quantique : ici on prend $M=\mathbb{R}$
et :\textit{\[
\mathbf{P}=P=-\frac{h^{2}}{2}\frac{d^{2}}{dx^{2}}+\frac{x^{2}}{2}\]
}alors l'application moment classique est $\mathbf{p}(x,\xi)=p(x,\xi)=\left(x^{2}+\xi^{2}\right)/2$.
Dans cet exemple, le spectre exact et semi-classique coincide et\textit{
$\sigma(P)=\left\{ h\left(n+\frac{1}{2}\right),\, n\in\mathbb{N}\right\} $.}
\end{example}

\subsection{Théorie locale}

\subsubsection*{Des points réguliers ...}

Les points réguliers de l'application moment classique sont les points
$m\in M$ tels que les différentielles $\left(dp_{i}(m)\right)_{i=1,...,n}$
sont libres. Les points réguliers d'un système complètement intégrable
ont une description symplectique locale simple donnée par le théorème
de Darboux-Carathéodory (voir par exemple \textbf{{[}9{]}}) :
\begin{thm}
\textbf{(Darboux-Carathéodory). }Soit $\left(M,\omega,\mathbf{p}=(p_{1},...,p_{n})\right)$
un système complètement intégrable de dimension $2n.$ Pour tout point
$m\in M$ régulier, ie : $\left(dp_{i}(m)\right)_{i=1,...,n}$ est
une famille libre; il existe un système de coordonnées canonique locales
$\left(x_{1},...,x_{n},\zeta_{1},...,\zeta_{n}\right)$ tel que sur
un voisinage de $m$ :\[
\zeta_{j}=p_{j}-p_{j}(m).\]

\end{thm}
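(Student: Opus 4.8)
The plan is to build the canonical coordinates one conjugate pair at a time, exploiting the commuting structure $\{p_i,p_j\}=0$ together with the classical Darboux theorem and an induction on $n$. First I would set $\zeta_j := p_j - p_j(m)$ for $j=1,\dots,n$; these are $n$ functions vanishing at $m$ with independent differentials at $m$ (by the regularity hypothesis) and in involution. The goal is to produce $n$ further functions $x_1,\dots,x_n$, vanishing at $m$, such that the full family $(x_1,\dots,x_n,\zeta_1,\dots,\zeta_n)$ has independent differentials at $m$ and satisfies the canonical Poisson relations $\{x_i,x_j\}=0$, $\{\zeta_i,\zeta_j\}=0$, $\{\zeta_i,x_j\}=\delta_{ij}$; the inverse function theorem then makes them a local coordinate system, and in those coordinates $\omega$ automatically takes the Darboux form $\sum d\zeta_j\wedge dx_j$.

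The construction of the $x_j$ proceeds by induction. Since $d\zeta_1(m)\neq 0$, the Hamiltonian vector field $\chi_{\zeta_1}$ does not vanish at $m$; by the flow-box theorem there is a function $x_1$ near $m$, vanishing at $m$, with $\chi_{\zeta_1}(x_1)=\{x_1,\zeta_1\}\equiv 1$ — i.e. $x_1$ is a time coordinate along the flow of $\chi_{\zeta_1}$. Now restrict attention to the common level set where one has the remaining $\zeta_2,\dots,\zeta_n$ and the new $x_1$; the key algebraic point is that because all the $\zeta_j$ are in involution, the flow of $\chi_{\zeta_1}$ preserves each $\zeta_j$ and one can arrange $\{x_1,\zeta_j\}=0$ for $j\geq 2$ (replacing $x_1$ by its average/normal-form along the $\chi_{\zeta_1}$-flow if necessary, using the involutivity $\{\zeta_1,\zeta_j\}=0$). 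Then one passes to the local symplectic "reduction'' by the pair $(x_1,\zeta_1)$: the submanifold $\{x_1=\zeta_1=0\}$ carries an induced symplectic structure of dimension $2(n-1)$, the functions $\zeta_2,\dots,\zeta_n$ restrict to an integrable system there with a regular point at $m$, and the inductive hypothesis supplies $x_2,\dots,x_n$ on it. Pulling these back (again adjusting so they Poisson-commute with $x_1$ and $\zeta_1$) completes the system.

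The main obstacle — and the step that needs the most care — is controlling the Poisson brackets among the successively constructed $x_j$: getting $\{x_i,x_j\}=0$ is not automatic and requires either a careful choice of the flow-box coordinates at each stage or an explicit correction term, and one must check that these corrections do not destroy the relations $\{\zeta_i,x_j\}=\delta_{ij}$ already secured. Equivalently, one can phrase the whole argument as: the submanifold $N=\{\zeta_1=\cdots=\zeta_n=0\}$ through $m$ is coisotropic of dimension $n$, in fact Lagrangian since the $\chi_{\zeta_j}$ span its tangent space and are pairwise $\omega$-orthogonal by involutivity; choosing a Lagrangian complement and a local Lagrangian fibration transverse to it, the classical Darboux–Weinstein normal form for a Lagrangian submanifold gives coordinates in which $N=\{\zeta=0\}$ and $\omega=\sum d\zeta_j\wedge dx_j$, after which one checks $\zeta_j$ and $p_j-p_j(m)$ agree to first order and then uses a Moser-type deformation to make them agree exactly. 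I would reference M. Audin's book [9] for the reduction lemma and the Lagrangian normal form, and present the inductive version as the cleaner write-up.
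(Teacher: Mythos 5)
The paper does not actually prove this theorem: it is quoted as a classical result with a pointer to M. Audin \textbf{[9]} (see also S. Vu Ngoc \textbf{[119]}), so there is no internal proof to compare against. Your inductive scheme is essentially the standard argument given in those references, and it is correct in outline: flow-box for $\chi_{\zeta_1}$ to get $x_1$ with $\{x_1,\zeta_1\}=1$, reduction to the symplectic slice $\{x_1=\zeta_1=0\}$, induction, and extension back by invariance under the commuting flows of $\chi_{x_1}$ and $\chi_{\zeta_1}$. Two spots deserve sharper wording if you write it up. First, ``replacing $x_1$ by its average along the $\chi_{\zeta_1}$-flow'' is not the right mechanism (there is no compact group to average over locally); what one really uses is that the fields $\chi_{\zeta_1},\dots,\chi_{\zeta_n}$ commute (because $[\chi_{\zeta_i},\chi_{\zeta_j}]=\chi_{\{\zeta_i,\zeta_j\}}=0$) and are independent at $m$, so the simultaneous flow-box theorem straightens them all at once; taking $x_1$ to be the first straightened coordinate gives $\{x_1,\zeta_j\}=\delta_{1j}$ directly, and the same remark handles your worry about the brackets $\{x_i,x_j\}$, since on the reduced slice the restricted $\zeta_j$ stay in involution (their Hamiltonian fields are tangent to the slice precisely because $\{x_1,\zeta_j\}=\{\zeta_1,\zeta_j\}=0$) and the bi-invariant extensions preserve all the canonical relations. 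Second, in your alternative route via the Lagrangian level set $N=\{\zeta=0\}$ and the Darboux--Weinstein normal form, the last step is not a Moser deformation: Weinstein's theorem gives fiber coordinates $\zeta'$ that are some local diffeomorphism $\phi$ of the $p_j-p_j(m)$, and to force $\zeta_j=p_j-p_j(m)$ exactly one performs the fiberwise-linear correction of the conjugate variables, $\tilde{x}_j=\sum_i\frac{\partial\zeta'_i}{\partial\zeta_j}x_i$, which leaves $\sum_j d\zeta_j\wedge d\tilde{x}_j=\sum_i d\zeta'_i\wedge dx_i=\omega$. With these adjustments either route closes; the inductive one is indeed the cleaner write-up and matches the cited literature.
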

Ce théorème a un analogue semi-classique du à Y. Colin de Verdière
en 1979 \textbf{{[}36{]}} :
\begin{thm}
Soit $\left(M,\omega,F=(f_{1},...,f_{n})\right)$ un système complètement
intégrable de dimension $2n.$ Pour tout point $m\in M$ régulier,
il existe $U$ un opérateur intégral de Fourier unitaire définie microlocalement
prés de $m$ tel que sur un voisinage microlocal de $m$ :\[
U\left(P_{j}-p_{j}(m)I_{d}\right)U^{-1}=\frac{h}{i}\frac{\partial}{\partial x_{j}}.\]

\end{thm}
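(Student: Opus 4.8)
Le plan est de partir du théorème de Darboux-Carathéodory (Théorème 8.4) pour ramener la situation classique à un modèle normal, puis de \emph{quantifier} ce changement de coordonnées à l'aide du théorème d'Egorov (Théorème 7.4) et enfin de corriger terme à terme en puissances de $h$ le reste semi-classique. Concrètement, soit $m\in M$ un point régulier. D'après Darboux-Carathéodory appliqué au système classique $(M,\omega,\mathbf{p})$ formé des symboles principaux, il existe un symplectomorphisme local $\chi$ d'un voisinage de $m$ dans $T^{*}X$ vers un voisinage de $0$ dans $T^{*}\mathbb{R}^{n}=\mathbb{R}^{2n}$ (muni de $\omega_{0}$) tel que $\chi$ envoie $p_{j}-p_{j}(m)$ sur la fonction coordonnée $\zeta_{j}$, c'est-à-dire sur le symbole principal de l'opérateur $\frac{h}{i}\frac{\partial}{\partial x_{j}}$. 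Quitte à restreindre le voisinage, $\chi$ est exact (localement toute $1$-forme fermée est exacte), ce qui est exactement l'hypothèse requise par le théorème d'Egorov.

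Le théorème d'Egorov fournit alors un opérateur intégral de Fourier $U$ (un morphisme inversible de $\mathbb{C}_{h}$-modules de microfonctions) associé à $\chi$, tel que pour tout opérateur pseudo-différentiel $\hat a$ de symbole principal $a_{0}$, l'opérateur $U^{-1}\hat a\, U$ est pseudo-différentiel de symbole principal $a_{0}\circ\chi$. En l'appliquant à chaque $P_{j}$, on obtient que $Q_{j}:=U\bigl(P_{j}-p_{j}(m)I_{d}\bigr)U^{-1}$ est un opérateur pseudo-différentiel dont le symbole principal est $(p_{j}-p_{j}(m))\circ\chi^{-1}=\zeta_{j}$, donc $Q_{j}=\frac{h}{i}\frac{\partial}{\partial x_{j}}+hR_{j}$ microlocalement près de $0$, où $R_{j}$ est pseudo-différentiel de symbole dans une classe d'ordre $h^{0}$ (le reste d'ordre $h$ dans le calcul symbolique). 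De plus, puisque les $P_{j}$ commutent et que $U$ est un morphisme d'algèbres à conjugaison près, les $Q_{j}$ commutent aussi microlocalement.

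Il reste alors à absorber les restes $hR_{j}$ par une conjugaison supplémentaire proche de l'identité. L'idée est de chercher un opérateur intégral de Fourier $V=\exp\bigl(\tfrac{i}{h}\widehat{b}\bigr)$ associé à un symbole $b=b_{1}h+b_{2}h^{2}+\cdots$ à déterminer, de sorte que $VQ_{j}V^{-1}=\frac{h}{i}\frac{\partial}{\partial x_{j}}$ exactement (modulo $O(h^{\infty})$ microlocalement). En développant la conjugaison par la formule de Baker-Campbell-Hausdorff et en identifiant les puissances de $h$, on obtient une suite d'équations de transport (de type cohomologique) pour les $b_{k}$; la condition de commutation $[Q_{i},Q_{j}]=0$ garantit la compatibilité de ce système, donc sa résolvabilité par récurrence, puis le théorème de resommation de Borel permet de recoller les $b_{k}$ en un seul symbole $b$. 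Le principal obstacle technique est précisément là: vérifier la résolubilité simultanée de toutes les équations de transport (il faut exploiter soigneusement l'involutivité $\{\zeta_{i},\zeta_{j}\}=0$ et $[Q_{i},Q_{j}]=0$ pour que les conditions de compatibilité soient satisfaites à chaque ordre), et contrôler que tout reste bien microlocalisé près de $m$. L'opérateur cherché est alors $U_{\mathrm{final}}=V\circ U$, qui est unitaire microlocalement quitte à renormaliser (les OIF associés à des symplectomorphismes exacts peuvent être choisis unitaires microlocalement).
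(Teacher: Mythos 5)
Le papier ne d\'emontre pas ce th\'eor\`eme : il se contente de l'attribuer \`a Y. Colin de Verdi\`ere (r\'ef\'erence [36]), de sorte qu'il n'y a pas de preuve interne \`a laquelle comparer votre texte. Votre sch\'ema correspond toutefois \`a la d\'emonstration standard que l'on trouve dans les r\'ef\'erences cit\'ees ([36], [119]) : quantification du symplectomorphisme de Darboux-Carath\'eodory par un op\'erateur int\'egral de Fourier via Egorov, ce qui ram\`ene les $P_{j}-p_{j}(m)I_{d}$ \`a $\frac{h}{i}\frac{\partial}{\partial x_{j}}+hR_{j}$ microlocalement, puis \'elimination des restes ordre par ordre en conjuguant par des op\'erateurs pseudo-diff\'erentiels elliptiques du type $\exp(\frac{i}{h}\widehat{b})$ avec $b=hb_{1}+h^{2}b_{2}+\cdots$ ; les \'equations de transport $\partial_{x_{j}}b_{k}=r_{j,k}$ sont compatibles car $[Q_{i},Q_{j}]=0$ donne $\partial_{x_{i}}r_{j,k}=\partial_{x_{j}}r_{i,k}$ \`a chaque ordre, elles se r\'esolvent sur un petit voisinage (lemme de Poincar\'e en la variable $x$, d\'ependance lisse en $\xi$), et la resommation de Borel conclut. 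Votre argument est donc correct dans ses grandes lignes ; deux points m\'eriteraient d'\^etre explicit\'es. D'abord l'unitarit\'e de l'op\'erateur final : pour que $\exp(\frac{i}{h}\widehat{b})$ soit unitaire il faut pouvoir choisir les $b_{k}$ r\'eels (avec la quantification de Weyl), ce qui repose sur le caract\`ere formellement auto-adjoint des $P_{j}$, hypoth\`ese implicite de l'\'enonc\'e que votre r\'edaction n'invoque pas ; de m\^eme la renormalisation $U(U^{*}U)^{-1/2}$ que vous mentionnez demande de v\'erifier que $U^{*}U$ est un op\'erateur pseudo-diff\'erentiel elliptique positif microlocalement. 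Ensuite, l'\'enonc\'e d'Egorov du papier conjugue les op\'erateurs de $T^{*}Y$ vers $T^{*}X$ (il donne le symbole $a_{0}\circ\chi$) ; pour obtenir $Q_{j}$ de symbole $\zeta_{j}$ il faut donc l'appliquer \`a $\chi^{-1}$, dont l'exactitude se d\'eduit de celle de $\chi$ quitte \`a r\'etr\'ecir le voisinage : simple question de sens de conjugaison, mais qu'il vaut mieux \'ecrire proprement.
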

Le théorème de Darboux-Carathéodory semi-classique permet de faire
une description précise de l'ensemble des micro-solutions des équations
$P_{j}u_{h}=O(h^{\infty})$. Les résultats d'analyse microlocale nous
informe déjà que les solutions $u_{h}$ sont localisées sur ${\displaystyle \bigcap_{j=1}^{n}}p_{j}^{-1}(0)$;
mais en fait on a bien mieux :
\begin{thm}
Pour tout point $m\in M$ régulier de $\mathbf{p}=(p_{1},...p_{n})$
tels que $\mathbf{p}(m)=0$, le faisceau des micro-solutions de l'équation\[
P_{j}u_{h}=O(h^{\infty})\]
est un faisceau en $\mathbb{C}_{h}$ module libre de rang 1 engendré
par $U^{-1}(\mathbf{1})$, où $U$ est donné par le précédent théorème
et $\mathbf{1}$ est une micro-fonction égale à $1$ près de l'origine. 
\end{thm}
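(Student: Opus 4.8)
Le plan est de se ramener au mod\`ele fourni par le th\'eor\`eme pr\'ec\'edent, puis de r\'esoudre explicitement le syst\`eme normalis\'e. On dispose de l'op\'erateur int\'egral de Fourier unitaire $U$, d\'efini sur un voisinage microlocal de $m$, tel que $U\left(P_{j}-p_{j}(m)I_{d}\right)U^{-1}=\frac{h}{i}\frac{\partial}{\partial x_{j}}$ pr\`es de l'image $m'$ de $m$ ; quitte \`a translater, on supposera $m'=(0,0)\in T^{*}\mathbb{R}^{n}$. Comme par hypoth\`ese $\mathbf{p}(m)=0$, ceci se r\'eduit \`a $UP_{j}U^{-1}=\frac{h}{i}\partial_{x_{j}}$, de sorte qu'en posant $v_{h}=Uu_{h}$ le syst\`eme $P_{j}u_{h}=O(h^{\infty})$ devient le syst\`eme mod\`ele $\frac{h}{i}\partial_{x_{j}}v_{h}=O(h^{\infty})$, $j=1,\ldots,n$. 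D'apr\`es le th\'eor\`eme d'Egorov, $U$ r\'ealise un isomorphisme de $\mathbb{C}_{h}$-modules entre les microfonctions pr\`es de $m$ et celles pr\`es de $m'$, et transporte le faisceau des micro-solutions sur celui du syst\`eme mod\`ele ; il suffit donc d'\'etablir l'\'enonc\'e pour ce dernier, la microfonction $\mathbf{1}$ jouant le r\^ole de g\'en\'erateur, de sorte que $U^{-1}(\mathbf{1})$ sera le g\'en\'erateur annonc\'e.

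Pour le syst\`eme mod\`ele pr\`es de l'origine, on commence par localiser les solutions. Le symbole principal de $\frac{h}{i}\partial_{x_{j}}$ \'etant $\xi_{j}$, la propri\'et\'e de localisation microlocale rappel\'ee plus haut donne $MS(v_{h})\subset\bigcap_{j=1}^{n}\{\xi_{j}=0\}$, c'est-\`a-dire que $v_{h}$ est microsupport\'ee sur la section nulle $\{\xi=0\}$ pr\`es de l'origine. On invoque alors la structure des distributions lagrangiennes semi-classiques : une microfonction microsupport\'ee sur la section nulle s'\'ecrit, modulo $O(h^{\infty})$ et au voisinage de l'origine, comme une fonction WKB de phase nulle, c'est-\`a-dire est repr\'esent\'ee par un symbole semi-classique classique $a_{h}(x)=\sum_{j\geq0}a_{j}(x)h^{j}$ avec $a_{j}\in\mathcal{C}^{\infty}$ au voisinage de $0$, les $a_{j}$ \'etant uniquement d\'etermin\'es. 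L'\'equation $\frac{h}{i}\partial_{x_{k}}v_{h}=O(h^{\infty})$ se lit alors $\partial_{x_{k}}a_{h}\equiv0$ au sens des symboles modulo $O(h^{\infty})$, ce qui force $\partial_{x_{k}}a_{j}=0$ pour tout $j$ et tout $k$ au voisinage de $0$ ; quitte \`a r\'etr\'ecir ce voisinage en le gardant connexe, chaque $a_{j}$ y est donc constant et $a_{h}=c(h):=\sum_{j\geq0}c_{j}h^{j}\in\mathbb{C}_{h}$, soit $v_{h}=c(h)\mathbf{1}+O(h^{\infty})$ pr\`es de l'origine. Ceci montre que le faisceau des micro-solutions est engendr\'e par $\mathbf{1}$ au-dessus des voisinages microlocaux connexes de l'origine.

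Il reste \`a voir que ce faisceau est \emph{libre} de rang $1$, c'est-\`a-dire que l'application $c(h)\mapsto c(h)\mathbf{1}$ de $\mathbb{C}_{h}$ dans la fibre est injective. D'une part $MS(\mathbf{1})$ contient l'origine, donc $\mathbf{1}\neq0$ pr\`es de l'origine ; d'autre part, si $c(h)\mathbf{1}$ est n\'egligeable pr\`es de l'origine, en appliquant \`a $c(h)\chi$ (o\`u $\chi\in\mathcal{C}_{c}^{\infty}$ vaut $1$ pr\`es de $0$) un op\'erateur pseudo-diff\'erentiel elliptique en $0$ \`a symbole ind\'ependant de $h$, on obtient $\left|c(h)\right|=O(h^{\infty})$, c'est-\`a-dire $c(h)=0$ dans $\mathbb{C}_{h}$. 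La structure de faisceau, avec ses restrictions et ses recollements, est h\'erit\'ee de celle des microfonctions, et son support est $\bigcap_{j}p_{j}^{-1}(0)$ comme attendu. En transportant le tout par $U^{-1}$, on conclut que le faisceau des micro-solutions de $P_{j}u_{h}=O(h^{\infty})$ pr\`es de $m$ est un faisceau en $\mathbb{C}_{h}$-modules libre de rang $1$ engendr\'e par $U^{-1}(\mathbf{1})$.

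Le point d\'elicat est l'\'etape de structure : il faut justifier qu'une microfonction microsupport\'ee sur la section nulle se repr\'esente par une fonction WKB de phase nulle, donc par un symbole classique $a_{h}$ uniquement d\'etermin\'e modulo $O(h^{\infty})$, ce qui ram\`ene le syst\`eme microlocal \`a un syst\`eme d'\'equations de transport triviales $\partial_{x_{k}}a_{j}=0$ portant sur des fonctions honn\^etes ; cela rel\`eve de la th\'eorie des distributions lagrangiennes semi-classiques. Une difficult\'e secondaire, purement technique, est que la forme normale $U$, et toutes les \'egalit\'es qui en d\'ecoulent, ne valent que sur des voisinages microlocaux, qu'il faut pouvoir r\'etr\'ecir tout en les gardant connexes pour passer de \emph{localement constant} \`a \emph{constant}.
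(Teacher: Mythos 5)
Le texte de l'article ne contient aucune d\'emonstration de ce th\'eor\`eme (il est \'enonc\'e comme r\'esultat connu, avec renvoi notamment \`a \textbf{[119]}) ; votre plan --- conjugaison par l'op\'erateur int\'egral de Fourier $U$ du th\'eor\`eme de Darboux-Carath\'eodory semi-classique, r\'eduction au syst\`eme mod\`ele $\frac{h}{i}\partial_{x_{j}}v_{h}=O(h^{\infty})$ pr\`es de l'origine, r\'esolution du mod\`ele, puis transport par $U^{-1}$ --- est bien la route standard de cette r\'ef\'erence, et vos \'etapes de localisation du micro-support, d'injectivit\'e de $c(h)\mapsto c(h)\mathbf{1}$ et de transport de la structure de faisceau sont correctes.

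En revanche, l'\'etape que vous signalez vous-m\^eme comme d\'elicate est, telle que vous l'\'enoncez, fausse : une microfonction microsupport\'ee sur la section nulle n'est pas en g\'en\'eral repr\'esent\'ee par un symbole \emph{classique} $a_{h}(x)=\sum_{j\geq0}a_{j}(x)h^{j}$ \`a coefficients ind\'ependants de $h$. L'anneau $\mathbb{C}_{h}$ est $A/I$ (familles temp\'er\'ees en $h$ modulo $O(h^{\infty})$) et non $\mathbb{C}[[h]]$ : par exemple $u_{h}(x)=\sin(1/h)\,\chi(x)$, avec $\chi$ lisse \`a support compact valant $1$ pr\`es de $0$, est admissible, microsupport\'ee sur la section nulle, solution microlocale du syst\`eme mod\`ele pr\`es de l'origine, mais n'admet aucun d\'eveloppement asymptotique en puissances de $h$ ; elle vaut $c(h)\mathbf{1}$ avec $c(h)=\sin(1/h)\in\mathbb{C}_{h}$, et non $\sum_{j}c_{j}h^{j}$. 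Votre argument, qui repose sur l'existence et l'unicit\'e des $a_{j}$ puis sur les \'equations $\partial_{x_{k}}a_{j}=0$, ne capture donc que le sous-module des solutions \emph{classiques} et ne d\'emontre pas que le faisceau tout entier est libre de rang $1$ sur $\mathbb{C}_{h}$. La r\'eparation est standard et plus \'el\'ementaire que l'appel \`a la th\'eorie des distributions lagrangiennes : pr\`es de l'origine on choisit un repr\'esentant lisse de la microsolution, par exemple $v_{h}=\chi(x)\,\psi(hD_{x})u_{h}$ modulo n\'egligeable (avec $\psi$ une troncature en $\xi$ pr\`es de $0$), on v\'erifie que $\partial_{x_{k}}v_{h}=O(h^{\infty})$ uniform\'ement sur un voisinage connexe de $0$, puis on int\`egre le long de chemins pour obtenir $v_{h}=c(h)+O(h^{\infty})$ avec $c(h)$ une constante admissible quelconque ; le reste de votre d\'emonstration (injectivit\'e, recollement, transport par $U^{-1}$) s'applique alors sans changement.
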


\subsubsection*{... aux points singuliers non-dégénérés}

Il y a toute une théorie sur l'étude et la classification des singularités
des applications moment. Le cas de certaines singularités générique,
dites singularités non-dégénérés est bien connu. Suivant une classification
algébrique dut à Williamson \textbf{{[}120{]}} datant de 1936, ces
singularités sont de trois types :
\begin{enumerate}
\item les singularités elliptiques : $q_{i}=x_{i}\zeta_{i};$
\item les singularités hyperboliques $q_{i}=(x_{i}^{2}+\zeta_{i}^{2})/2;$
\item et les singularités loxodromique (ou focus-focus) $q_{i}=x_{i}\zeta_{i+1}-x_{i+1}\zeta_{i}\,\,\textrm{et }\,\, q_{i+1}=x_{i}\zeta_{i}+x_{i+1}\zeta_{i+1}.$
\end{enumerate}
D'après un théorème d'Eliasson de 1984 \textbf{{[}55}{]}, \textbf{{[}56{]}}
toutes ces singularités non-dégénérés sont linéarisable : il existe
$\chi$ un symplectomorphisme local de $\mathbb{R}^{2n}$ dans $M$
tel que : $p_{i}\circ\chi=g_{i}(q_{1},q_{2},\ldots,q_{n}).$ En réalité
Y. Colin de Verdière et J. Vey ont traités le cas de la dimension
1 de ce théorème en 1979 \textbf{{[}35{]}}. Par quantification de
Weyl on obtient l'analogue semi-classique du théorème d'Eliasson (voir
le livre de S. Vu Ngoc\textbf{ }{[}\textbf{119{]}} pour un énoncé
précis).

\subsection{Théorie semi-globale}

\subsubsection*{Des fibres régulières ...}

Une fibre $\Lambda_{c}:=\mathbf{p}^{-1}(c)$ est régulière si et seulement
si tous les points de $\Lambda_{c}$ sont réguliers pour $\mathbf{p}.$
Les fibres régulières sont décrites par le théorème actions-angles,
nommé aussi théorème d'Arnold-Liouville-Mineur, qui donne la dynamique
classique au voisinage d'une fibre régulière connexe et compacte :
le flot hamiltonien associé à une intégrale première est quasi-périodique
(droite s'enroulant sur un tore) :
\begin{thm}
\textbf{(Actions-angles).} Soit $\left(M,\omega,\mathbf{p}=(p_{1},...,p_{n})\right)$
un système complètement intégrable de dimension $2n.$ Soit $\Lambda_{c}$
une composante connexe compacte de $\mathbf{p}^{-1}(c)$ telle que
tous les points de la fibre $\Lambda_{c}$ sont réguliers; alors toutes
les fibres dans un voisinage de $\Lambda_{c}$ sont des tores et il
existe $\varphi$ un symplectomorphisme :\textup{\[
\varphi\,:\, T^{*}\mathbb{T}^{n}\rightarrow M\]
}qui envoi la section nulle $\mathbb{T}^{n}\times\left\{ 0\right\} $
de $T^{*}\mathbb{T}^{n}$ sur $\Lambda_{c}$ et tel que :\textup{\[
\mathbf{p}\circ\varphi=\chi\left(\zeta_{1},...,\zeta_{n}\right)\]
}où $\chi$ est un difféomorphisme local de $\mathbb{R}^{2n}$ laissant
fixe l'origine. 
\end{thm}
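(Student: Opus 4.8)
The plan is to follow the classical Liouville--Arnold--Mineur argument in three stages: first identify the fibre $\Lambda_c$ as a torus, then trivialize the fibration near $\Lambda_c$, and finally produce action--angle coordinates in which $\omega$ takes the Darboux form $\sum_j d\zeta_j\wedge dx_j$.

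First I would exploit the commutation relations. Since $\{p_i,p_j\}=0$, each $p_j$ is constant along the flow $\varphi_t^{p_i}$ of $\chi_{p_i}$, so these flows preserve the fibres of $\mathbf{p}$, and since $[\chi_{p_i},\chi_{p_j}]=\chi_{\{p_i,p_j\}}=0$ the flows commute. On the regular fibre $\Lambda_c$ the fields $\chi_{p_1},\dots,\chi_{p_n}$ are pointwise independent (they are dual via $\omega$ to the independent $dp_j$) and tangent to $\Lambda_c$, hence span $T\Lambda_c$ everywhere; as $\Lambda_c$ is compact they are complete. This yields a locally free action of $\mathbb{R}^{n}$ on $\Lambda_c$ with open orbits, which is therefore transitive by connectedness, so $\Lambda_c\cong\mathbb{R}^{n}/\Gamma_c$ for a discrete stabilizer $\Gamma_c$, and compactness forces $\Gamma_c$ to be a rank-$n$ lattice: $\Lambda_c\cong\mathbb{T}^{n}$. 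Running the same construction on nearby regular fibres, together with continuity of the period lattice $\Gamma_b$, shows that every fibre in a saturated neighbourhood $U$ of $\Lambda_c$ is a torus and that $\mathbf{p}|_U\colon U\to B$ is a torus bundle over a ball $B\ni c$, trivial since $B$ is contractible, so $U\cong\mathbb{T}^{n}\times B$ as a bundle.

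Next I would build the action variables. Note $\Lambda_c$ is Lagrangian, since its tangent space is spanned by the $\chi_{p_j}$ and $\omega(\chi_{p_i},\chi_{p_j})=\{p_i,p_j\}=0$; hence $[\omega]$ vanishes on the retract $U\simeq\Lambda_c$ and $\omega=d\alpha$ on $U$. Choosing a smooth basis $\gamma_1(b),\dots,\gamma_n(b)$ of $H_1(\mathbf{p}^{-1}(b);\mathbb{Z})$ coming from $\Gamma_b$, set
\[
I_j(b)=\frac{1}{2\pi}\oint_{\gamma_j(b)}\alpha .
\]
By Stokes, and because a cycle on one fibre is homologous in $U$ to the corresponding cycle on a nearby one, $I_j$ depends only on $b=\mathbf{p}(m)$, so $I_j=I_j\circ\mathbf{p}$. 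The key computation is that the Hamiltonian flow of each $I_j$ is exactly $2\pi$-periodic and that $\chi_{I_1},\dots,\chi_{I_n}$ generate a free Hamiltonian $\mathbb{T}^{n}$-action on $U$; the same computation shows the period matrix relating $\partial I/\partial p$ to $\Gamma_b$ is invertible, so $dI_1,\dots,dI_n$ are independent and $b\mapsto I(b)$ is a local diffeomorphism of $\mathbb{R}^{n}$. Picking a Lagrangian section of $\mathbf{p}|_U$ and letting $\theta_j\in\mathbb{R}/2\pi\mathbb{Z}$ be the time elapsed along $\chi_{I_j}$ from that section, a direct verification using that the $I_j$ Poisson-commute, that their flows are $2\pi$-periodic, and that the section is Lagrangian gives $\omega=\sum_j dI_j\wedge d\theta_j$ on $U$. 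After the obvious rescaling $(\zeta_j,x_j)$ of $(I_j,\theta_j)$, the map $\Phi=(\zeta,x)\colon U\to B'\times\mathbb{T}^{n}$ is a symplectomorphism onto a neighbourhood of the zero section of $T^*\mathbb{T}^{n}$; its inverse is the desired $\varphi$, carrying $\mathbb{T}^{n}\times\{0\}$ to $\Lambda_c$. Since $\zeta$ is a diffeomorphic function of $\mathbf{p}$ alone, $\mathbf{p}\circ\varphi$ is a function of $\zeta$ only, $\mathbf{p}\circ\varphi=\chi(\zeta_1,\dots,\zeta_n)$, with $\chi$ a local diffeomorphism (fixing the origin once $c$ is normalized to $0$).

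The main obstacle, and the heart of the proof, is the middle step: showing that the action integrals $I_j$ are well defined as functions of $\mathbf{p}$, that their differentials are independent, and above all that their Hamiltonian flows are $2\pi$-periodic --- this last point is what makes the $\theta_j$ genuine angle coordinates and forces the Darboux normal form $\omega=\sum_j dI_j\wedge d\theta_j$. An alternative to reassembling $\omega$ by hand is to invoke Weinstein's Lagrangian tubular neighbourhood theorem to identify $U$ symplectically with a neighbourhood of the zero section of $T^*\mathbb{T}^{n}$, and then use the action variables only to arrange that the moment map depends on the base coordinate alone; but either route requires the periodicity of the action flows.
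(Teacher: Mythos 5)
The paper itself gives no proof of this theorem: it is quoted as the classical Arnold--Liouville--Mineur result with references (Audin, Vu Ngoc), so there is no line-by-line comparison to make. Your outline is exactly the canonical argument found in those references: the commuting, complete Hamiltonian flows of the $p_j$ define a locally free, transitive $\mathbb{R}^n$-action on the compact connected regular fibre, so $\Lambda_c\cong\mathbb{R}^n/\Gamma_c\cong\mathbb{T}^n$; the smooth variation of the period lattice gives a (trivial) torus fibration over a small ball; $\omega$ is exact on a saturated neighbourhood because the fibres are Lagrangian; the actions $I_j=\frac{1}{2\pi}\oint_{\gamma_j(b)}\alpha$ descend to the base; their flows are $2\pi$-periodic and angles are obtained by flowing from a Lagrangian section, yielding $\omega=\sum_j dI_j\wedge d\theta_j$ and hence the symplectomorphism with a neighbourhood of the zero section of $T^*\mathbb{T}^n$, with $\mathbf{p}\circ\varphi$ a function of the actions alone. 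This is correct in structure and is the same route as the literature the paper points to.

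Two caveats. (i) The step you yourself call the heart of the proof --- $2\pi$-periodicity of the flows of the $I_j$ and independence of $dI_1,\dots,dI_n$ --- is asserted rather than proved. The standard way to close it is Mineur's computation: applying Stokes to the cylinder swept out by the cycle $\gamma_j(b)$ as $b$ varies, one shows that the components of $2\pi\, dI_j$ in the basis $dp_1,\dots,dp_n$ are precisely the coordinates $\tau_j^k(b)$ of the lattice vector $\gamma_j(b)\in\Gamma_b$, i.e. $\chi_{I_j}=\frac{1}{2\pi}\sum_k\tau_j^k(b)\,\chi_{p_k}$; periodicity follows because time $2\pi$ along $\chi_{I_j}$ is exactly the lattice translation by $\gamma_j(b)$, and invertibility of $\bigl(\partial I_j/\partial p_k\bigr)$ follows because the $\tau_j(b)$ form a basis of the lattice. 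Without some version of this identity the angles $\theta_j$ are not well defined modulo $2\pi$ and the normal form for $\omega$ does not follow. (ii) Two smaller points are glossed over: that the nearby fibres are themselves compact (one needs local properness of $\mathbf{p}$ near the compact component $\Lambda_c$, obtained by restricting to a small saturated tube around it), and the existence of a Lagrangian section of $\mathbf{p}$ over a small ball (e.g. produced from a Darboux--Carath\'eodory chart). With these filled in, your argument is a complete proof of the statement.
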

On dit que les variables $(x_{1},\ldots,x_{n})$ sont les variables
actions et $(\zeta_{1},...,\zeta_{n})$ les variables angles. San
Vu Ngoc a donné la version semi-classique de ce théorème \textbf{{[}116{]}},\textbf{{[}117{]}
}:
\begin{thm}
\textbf{(Actions-angles semi-classique).} Si la fibre $\Lambda_{c}$
est régulière il existe $U$ un opérateur intégral de Fourier associé
à $\chi$ un symplectomorphisme exact :\textup{\[
\chi\,:\, T^{*}\mathbb{T}^{n}\rightarrow M\]
}qui envoi la section $\zeta=a$ sur $\Lambda_{c}$ et il existe aussi
des séries formelle $\lambda_{j}(h)\in$$\mathbb{C}[[h]]$, $j=1,...,n$
telle que microlocalement au voisinage de la section $\zeta=a$ on
ait \[
U\left(P_{1}-p_{1}(m)I_{d},...,P_{n}-p_{n}(m)I_{d}\right)U^{-1}=N\left(\frac{h}{i}\frac{\partial}{\partial x_{1}}-\lambda_{1}(h)I_{d},...,\frac{h}{i}\frac{\partial}{\partial x_{n}}-\lambda_{n}(h)I_{d}\right)\]
où $m$ est un point quelconque de la fibre $\Lambda_{c}$ et $N$
est un matrice 2$\times$2 inversible à coefficients dans le $\mathbb{C}_{h}$-module
des opérateurs pseudo-différentiel.
\end{thm}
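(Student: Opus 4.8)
The plan is to bootstrap the semi-classical action-angle theorem from its classical counterpart (Theorem "Actions-angles") together with the semi-classical Darboux-Carath\'eodory theorem already proved above, using the machinery of Fourier integral operators and the Egorov theorem. The strategy mirrors the way one deduces a semi-classical normal form from a symplectic normal form: first one produces the classical geometric object, then one quantizes the associated symplectomorphism, and finally one corrects the resulting pseudo-differential operators order by order in $h$.

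First I would invoke the classical action-angle theorem to obtain the exact symplectomorphism $\chi:T^{*}\mathbb{T}^{n}\rightarrow M$ sending the section $\zeta=a$ onto $\Lambda_{c}$ with $\mathbf{p}\circ\chi=\chi_{0}(\zeta_{1},\dots,\zeta_{n})$ for a local diffeomorphism of $\mathbb{R}^{n}$; here one must check that $\chi$ can be taken \emph{exact} in the sense required by the Egorov theorem, i.e.\ that $\chi^{*}\beta-\alpha$ is exact on a neighbourhood of the zero section, which follows because the first cohomology obstruction is exactly the set of action coordinates, already built into the statement. Then, by the Egorov theorem (Theorem "Egorov"), there is an invertible Fourier integral operator $U$ associated to $\chi$ such that for any pseudo-differential operator $\hat a$ on $T^{*}\mathbb{T}^{n}$ the conjugate $U^{-1}\hat a\, U$ is pseudo-differential on $M$ with principal symbol $a_{0}\circ\chi$. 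Applying this to the commuting family $P_{1},\dots,P_{n}$ (whose principal symbols are $p_{1},\dots,p_{n}$), we get commuting pseudo-differential operators $Q_{j}:=U(P_{j}-p_{j}(m)I_{d})U^{-1}$ on $T^{*}\mathbb{T}^{n}$ whose principal symbols are $p_{j}\circ\chi-p_{j}(m)=\chi_{0,j}(\zeta)-\chi_{0,j}(a)$, i.e.\ functions of $\zeta$ alone vanishing on $\zeta=a$.

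Next I would straighten the $Q_{j}$ to the model operators $\tfrac{h}{i}\partial_{x_{j}}$ modulo the action constants. Since the $\chi_{0,j}$ form a local diffeomorphism, the principal symbols of the $Q_{j}$ generate, after a further (invertible, $h$-dependent) linear recombination encoded by the matrix $N$, exactly the symbols $\zeta_{j}-a_{j}$; this is where the semi-classical Darboux-Carath\'eodory theorem (Theorem "Soit $(M,\omega,F=\dots)$") enters, applied on $T^{*}\mathbb{T}^{n}$ near a point of the zero section, providing a unitary Fourier integral operator conjugating the recombined operators to $\tfrac{h}{i}\partial_{x_{j}}$ up to lower-order terms. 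Iterating the construction of the subprincipal and lower-order corrections — at each step one solves a cohomological equation on the torus $\mathbb{T}^{n}$, whose solvability up to a constant produces one more term of the formal series $\lambda_{j}(h)\in\mathbb{C}[[h]]$ — and resumming by the Borel resummation theorem invoked earlier, one obtains the claimed microlocal normal form $U(P_{1}-p_{1}(m)I_{d},\dots)U^{-1}=N(\tfrac{h}{i}\partial_{x_{1}}-\lambda_{1}(h)I_{d},\dots)$.

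\textbf{The main obstacle} will be the global-on-the-torus nature of the successive corrections: unlike the purely local Darboux-Carath\'eodory situation, here one must solve, at each order in $h$, an equation of the form $\{g_{\mathrm{new}},\,\zeta\}=r$ (or its operator analogue $[\tfrac{h}{i}\partial_{x},g_{\mathrm{new}}]=hr+\dots$) for a function $g_{\mathrm{new}}$ on $T^{*}\mathbb{T}^{n}$, and such an equation is solvable only after subtracting the average of $r$ over the torus fibre — it is precisely these averages that accumulate into the action-dependent constants $\lambda_{j}(h)$, and one must verify that the remaining obstruction genuinely vanishes (i.e.\ that the cohomology class is killed) so that the induction can proceed. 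Controlling that the recombination matrix $N$ stays invertible with entries in the ring of pseudo-differential operators throughout the iteration, and that the Fourier integral operator built as an infinite composition still makes sense microlocally near the section $\zeta=a$, are the technical points requiring care; everything else is a routine application of symbolic calculus (Theorem 6.4, Calder\'on--Vaillancourt) and the stationary-phase estimates underlying Egorov's theorem.
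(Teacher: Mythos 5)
The paper itself gives no proof of this theorem: it is stated as a survey result and referred to Vu Ngoc's work (\textbf{[116]}, \textbf{[117]}, \textbf{[119]}), so the comparison is with that cited proof rather than with an argument in the text. Your outline is essentially the standard route followed there: classical action--angle theorem to produce $\chi$, quantization of $\chi$ by a Fourier integral operator via Egorov, reduction of the principal symbols to $\zeta_{j}-a_{j}$ by an invertible recombination (this is the role of $N$, which should be read as an $n\times n$ matrix of pseudo-differential operators -- the \og$2\times2$\fg{} in the statement is an artefact), and then an order-by-order correction in $h$ in which the homological equation on $\mathbb{T}^{n}$ is solvable only after removing the fibre average, these averages building up the series $\lambda_{j}(h)$. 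Two points deserve more care than your sketch gives them. First, exactness of $\chi$ is not automatic from the classical theorem: it forces the section level $a$ to be the vector of action integrals $\frac{1}{2\pi}\oint_{\gamma_{j}}\alpha$, and this is precisely how the leading term of $\lambda_{j}(h)$ arises; your remark that the obstruction is \og built into the statement\fg{} is correct but this is where it must be made explicit. Second, the constants $\lambda_{j}(h)$ are not produced solely by averaging symbols: since a neighbourhood of the torus is not simply connected, the FIO $U$ can only be constructed microlocally on a covering of $\Lambda_{c}$, and the holonomy of the resulting cocycle (which carries in particular the Maslov contribution at order $h$) must be absorbed into the $\lambda_{j}(h)$; this is the same mechanism the paper describes later when deriving the Bohr--Sommerfeld rules, and it is the genuinely global ingredient that a purely local Darboux--Carath\'eodory argument, even iterated, does not supply by itself. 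With those two points made precise, your plan coincides with the proof in the cited references.
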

Le théorème actions-angles semi-classique fournit donc une description
locale du spectre d'un système intégrable; en effet autour d'une valeur
régulière de l'application moment classique $\mathbf{p}$ le spectre
conjoint semi-classique \foreignlanguage{english}{$\Sigma_{h}(P_{1},...,P_{n})$}
est localement difféomorphe au réseau $h\mathbb{Z}^{n}$ (voir \textbf{{[}119{]}}).
Précisément du théorème actions-angles semi-classique on arrive aux
fameuses règles de quantification de Bohr-Sommerfeld :
\begin{thm}
\textbf{(Règles de Bohr-Sommerfeld). }Il existent $n$ fonctions $S_{j}(E)=S_{j}(E,h),\, j=1,...,n$
admettant des développements asymptotiques en puissance de $h$ avec
des coefficients de classe $\mathcal{C}^{\infty}$ par rapport à $E=\left(E_{1},\cdots,E_{n}\right)\in\mathbb{R}^{n}$,
telles que les équations :\[
\left(P_{j}-E_{j}I_{d}\right)u_{h}=O(h^{\infty})\]
admettent une solution microlocale $u_{h}$ avec son microsupport
$MS(u_{h})=\Lambda_{E}$ si et seulement si $S_{j}(E)\in2\pi h$. 
\end{thm}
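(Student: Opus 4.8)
The plan is to derive this from the semiclassical action--angle theorem by reducing, microlocally near the regular fibre $\Lambda_E=\mathbf{p}^{-1}(E)$, to a model on $T^{*}\mathbb{T}^{n}$ and then turning the existence question into the vanishing of a holonomy. First I would fix $E$ near a regular value of $\mathbf{p}$ and invoke the theorem: there is a Fourier integral operator $U$ attached to an exact symplectomorphism $\chi:T^{*}\mathbb{T}^{n}\to M$ sending a section $\{\zeta=a\}$ onto $\Lambda_E$, together with formal series $\lambda_j(h)=\lambda_j(E,h)$ and an invertible matrix $N$ of pseudodifferential operators, such that microlocally near that section
\[
U\bigl(P_1-E_1 I_d,\dots,P_n-E_n I_d\bigr)U^{-1}=N\Bigl(\tfrac{h}{i}\partial_{x_1}-\lambda_1(h)I_d,\dots,\tfrac{h}{i}\partial_{x_n}-\lambda_n(h)I_d\Bigr).
\]
Since $N$ is invertible, a microfunction $u_h$ with $(P_j-E_j I_d)u_h=O(h^{\infty})$ and $MS(u_h)=\Lambda_E$ corresponds bijectively, through $v_h=Uu_h$, to a nonzero microfunction $v_h$ on $T^{*}\mathbb{T}^{n}$ solving $(\tfrac{h}{i}\partial_{x_j}-\lambda_j(h))v_h=O(h^{\infty})$ along the whole section. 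On the universal cover $\mathbb{R}^{n}$ the solutions of the model are the multiples of $e^{i\lambda(E,h)\cdot x/h}$, so the sheaf $\mathcal{L}(\mathbf{P},E,\cdot)$ of microlocal solutions along $\Lambda_E$ is, by the local theory already quoted (the rank--$1$ statement following Darboux--Carath\'eodory), a locally free $\mathbb{C}_h$--module of rank one, generated on small pieces by WKB microfunctions of the form $U^{-1}(\mathbf 1)$.

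The heart of the proof is to compute the holonomy of this rank--one local system along $\Lambda_E\cong\mathbb{T}^{n}$, which is the only obstruction to the existence of a global section. I would cover the torus by finitely many charts on each of which action--angle applies, so that the solution sheaf is generated there by a WKB microfunction whose phase is, at leading order, a local primitive of the Liouville form $\alpha$ pulled back to $\Lambda_E$, corrected by the lower-order data carried by $N$ and by $\lambda_j(E,h)$; on overlaps these generators differ by transition elements of $\mathbb{C}_h^{*}$. Multiplying the transitions around the $j$-th generator $\gamma_j(E)$ of $H_1(\Lambda_E;\mathbb{Z})$ produces the holonomy $g_j=\exp\!\bigl(\tfrac{i}{h}S_j(E)\bigr)$, where the ``quantum action'' $S_j(E)=S_j(E,h)$ has principal part $\oint_{\gamma_j(E)}\alpha$, first correction $-\tfrac{\pi h}{2}\mu(\gamma_j)+h\oint_{\gamma_j(E)}\kappa$ with $\mu(\gamma_j)$ the Maslov index of the cycle and $\kappa$ the subprincipal form, and higher corrections supplied by the formal series in the normal form. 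Unitarity of $U$ (equivalently, conservation of the natural half-density structure on the solution sheaf) forces $|g_j|=1+O(h^{\infty})$, so each $g_j$ is a genuine phase. Finally, smoothness of $S_j$ in $E$ and its asymptotic expansion in powers of $h$ are inherited from the corresponding properties of $\chi$, $U$ and $\lambda_j(E,h)$, once the action--angle normal form is run in a family over a neighbourhood of $E$.

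It then only remains to conclude: the rank--one system admits a global section, i.e.\ there is $u_h\neq0$ with $(P_j-E_j I_d)u_h=O(h^{\infty})$ and $MS(u_h)=\Lambda_E$, if and only if every holonomy $g_j$ equals $1$ in $\mathbb{C}_h^{*}$; unwinding $g_j=\exp(\tfrac{i}{h}S_j(E))$ this reads $S_j(E)\in 2\pi h\mathbb{Z}$ for all $j$, and when these hold the section is unique up to a $\mathbb{C}_h$--scalar and, being nonzero and locally WKB, has microsupport exactly $\Lambda_E$, so the equivalence is two-sided. The hard part will be the holonomy computation of the second step: on the one hand one must upgrade the semiclassical action--angle theorem to a version with parameters so that the $S_j(E)$ are genuinely $\mathcal C^{\infty}$ in $E$ with a clean $h$-expansion, and on the other hand one must track the phase of the transition functions through the caustics, where the metaplectic/half-density part of the Fourier integral operators contributes the Maslov term and the subprincipal symbol enters at the next order; pinning down this cocycle modulo $2\pi h\mathbb{Z}$ is the delicate point, whereas the reduction to $T^{*}\mathbb{T}^{n}$ and the rank--one property of the solution sheaf are soft consequences of the results recalled above.
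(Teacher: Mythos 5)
Votre démarche est correcte et coïncide pour l'essentiel avec celle du texte : réduction microlocale au modèle sur $T^{*}\mathbb{T}^{n}$ par le théorème actions--angles semi-classique, faisceau des micro-solutions localement libre de rang $1$ engendré par $U^{-1}(\mathbf{1})$, puis calcul de l'holonomie des constantes de transition $c_{\alpha\beta}=e^{\frac{iS_{\alpha\beta}}{h}}$ le long de la fibre, la condition d'existence d'une solution globale étant l'annulation de la classe $S(E)$ modulo $2\pi h\mathbb{Z}$. Les précisions que vous ajoutez (partie principale $\oint_{\gamma_j}\alpha$, indice de Maslov, forme sous-principale, régularité en $E$) sont exactement les points que le texte délègue aux références \textbf{[116]}, \textbf{[118]}, \textbf{[119]}, sans changer la structure de l'argument.
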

Décrivons un peu la construction de ces fonctions $S_{j}(E)$. Pour
fixer les idées, plaçons nous dans le cas de la dimension symplectique
deux; donc \textbf{$\mathbf{P}=P$}. D'après un des théorèmes précédent
pour une fibre $\Lambda_{E}:=\mathbf{p}^{-1}(E)$ compacte, connexe
et régulière, toute microsolution $u_{h}$ de $\left(P-EI_{d}\right)u_{h}=O(h^{\infty})$
est engendré par $U^{-1}(\mathbf{1})$. La théorie des opérateurs
intégraux de Fourier montre que $u_{h}$ est nécessairement du type
WKB. On va maintenant décrire comment on prolonge une microsolution
d'un ouvert à un autre le long d'une fibre régulière (pour plus de
détails, voir {[}\textbf{119}{]}). Pour commencer on se donne un recouvrement
fini $\left(U_{\alpha}\right)_{\alpha\in\Omega}$ d'ouverts de la
fibre $\Lambda_{E}$. Pour tout couple d'ouverts non vides $U_{\alpha}$
et $U_{\beta}$ du recouvrement tels que $U_{\alpha}\cap U_{\beta}$
est non vide et connexe; si on considère alors deux microfonctions
$\varphi_{\alpha}$ et $\varphi_{\beta}$ solutions de $\left(P-\lambda I_{d}\right)u_{h}=O(h^{\infty})$
microlocalement sur les ouverts respectifs $U_{\alpha}$ et $U_{\beta}$,
les microfonctions $\varphi_{\alpha}$ et $\varphi_{\beta}$ sont
respectivement engendrés par $U^{-1}\left(\mathbf{1}_{\alpha}\right)$
et par $U^{-1}\left(\mathbf{1}_{\beta}\right)$, $\mathbf{1}_{\alpha}$
et $\mathbf{1}_{\beta}$ étant égale à $1$ microlocalement sur $U_{\alpha}$
et respectivement sur $U_{\beta}$. En se plaçant sur $U_{\alpha}\cap U_{\beta}$
et en utilisant l'argument de la dimension $1$ on a l'existence de
$c_{\alpha,\beta}\in\mathbb{C}_{h}$ tel que sur $U_{\alpha}\cap U_{\beta}$
on ait\[
U^{-1}\left(\mathbf{1}_{\alpha}\right)=c_{\alpha,\beta}U^{-1}\left(\mathbf{1}_{\beta}\right)\]
et donc, sur $U_{\alpha}\cap U_{\beta}$ on a : \[
\mathbf{1}_{\alpha}=c_{\alpha,\beta}\mathbf{1}_{\beta}.\]
La théorie des opérateurs intégraux de Fourier montre (voir \textbf{{[}118{]}},\textbf{{[}119{]}})
que la constante $c_{\alpha,\beta}$ s'écrit sous la forme : \[
c_{\alpha,\beta}=e^{\frac{iS_{\alpha\beta}}{h}}\]
le scalaire $S_{\alpha\beta}$ étant dans $\mathbb{C}_{h}$ est dépendant
de la variable $E$. 

Plus généralement pour une famille finie $\left(U_{k}\right)_{k=1,...,l}$
d'ouverts non vides recouvrant une partie compacte et connexe de la
fibre régulière $\Lambda_{E}$ telle que pour tout indice $k\in\left\{ 1,...,l-1\right\} ,$
$U_{k}\cap U_{k+1}$ est non vide et connexe. Sur chaque ouvert $U_{k}$
on a un générateur $\mathbf{1}_{k}$ de $\mathcal{L}\left(P,\lambda,U_{k}\right)$
et pour tout indice $k\in\left\{ 1,...,l-1\right\} $ il existe $c_{k,k+1}=e^{\frac{iS_{k,k+1}}{h}}\in\mathbb{C}_{h}$
tel que :

\[
\mathbf{1}_{k}=c_{k,k+1}\mathbf{1}_{k+1}\]
ainsi nous avons alors l'égalité suivante \[
\mathbf{1}_{1}=c_{1,2}c_{2,3}\ldots c_{l-1,l}\mathbf{1}_{l}.\]

On peut donc écrire $\mathbf{1}_{1}=e^{\frac{iS_{1,l}}{h}}1_{l}$
où on a posé \[
S_{1,l}=\sum_{k=1}^{l-1}S_{k,k+1}.\]
On notera par $S$ sa classe de cohomologie à valeur dans $\mathbb{R}/2\pi h\mathbb{Z}$
associée. La dépendance en la variable $E$ est lisse : les fonctions
$E\mapsto S_{.}(E)$ sont $\mathcal{C}^{\infty}$(voir \textbf{{[}116{]}},\textbf{{[}118{]}}
et \textbf{{[}119{]}}). Ainsi, avoir une solution globale sur la fibre
signifie que le réel $S(E)$ est un multiple de $2\pi h$. 

\begin{center}
\includegraphics[scale=0.5]{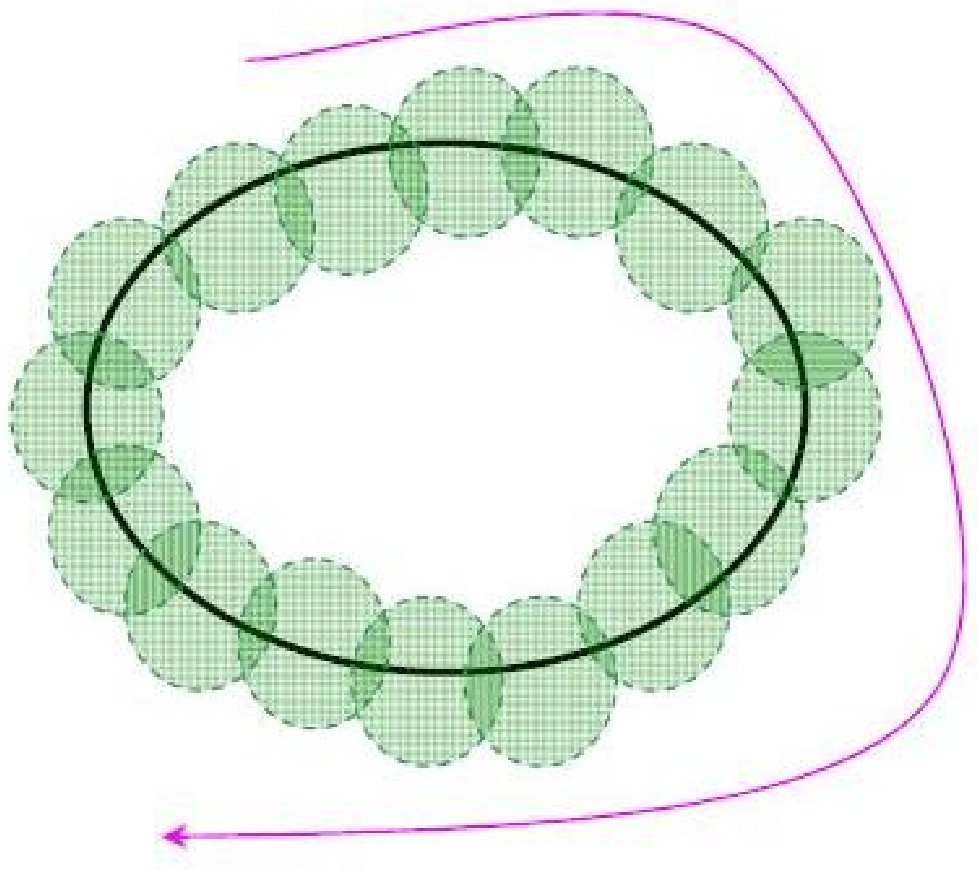}
\par\end{center}

\begin{center}
\textit{Fig. 2: Un recouvrement par des ouverts d'une fibre régulière.}
\par\end{center}

\vspace{0.25cm}

Dans les travaux de B. Hellfer et D. Robert on a un énoncé plus explicite
des règles de quantification de Bohr-Sommerfeld (voir l'article de
B. Hellfer et D. Robert {[}77\textbf{{]}}). Les Hypothèses fonctionnelles
de B. Hellfer et D. Robert {[}\textbf{77{]}} et \textbf{{[}109{]}})
sont les suivantes: 

\textbf{(H1) }Le symbole $p$ de $P_{h}$ est réel et lisse.

\textbf{(H2)} Le symbole principale $p_{0}$ de $P_{h}$ est borné
inférieurerement : il existe $c_{0}>0$ et $\gamma_{0}\in\mathbb{R}$
tels que pour tout $z\in T^{*}\mathbb{R}$, $c_{0}<p_{0}(z)+\gamma_{0}$.
De plus on suppose que \foreignlanguage{english}{$p(z)+\gamma_{0}$}
est un poids tempéré : il existe $C>0$ et $M\in\mathbb{R}$ tels
que pour tout $(z,z^{\prime})\in\left(T^{*}\mathbb{R}\right)^{2}$
on ait \[
p_{0}(z)+\gamma_{0}\leq C\left(p_{0}(z^{\prime})+\gamma_{0}\right)\left(1+\left|z-z^{\prime}\right|\right)^{M}.\]

\textbf{(H3)} Pour tout indice $j\in\mathbb{N}$ et pour tout multiindex
$\alpha$ il existe $c>0$ tel que pour tout $z\in T^{*}\mathbb{R}$
\[
\left|\frac{\partial^{\alpha}H_{j}}{\partial z^{\alpha}}\right|\leq c\left(p_{0}(z)+\gamma_{0}\right).\]

\textbf{(H4)} Il existe $N_{0}\in\mathbb{N}$ tel que pour tout $N\ge N_{0}$
et pour tout $\gamma>0$ il existe $A>0$ tel que pour tout $z\in T^{*}\mathbb{R}$
\[
\left|\frac{\partial^{\alpha}}{\partial z^{\alpha}}\left(p(z)-\sum_{j=0}^{N}p_{j}(z)h^{j}\right)\right|\leq Ah^{N+1}.\]

\begin{rem}
Sous ces 4 hypothèses l'opérateur $P_{h}$ est essentiellement auto-adjoint
\textbf{{[}107{]}}. 
\end{rem}
Les Hypothèses géométriques sont les suivantes: pour un compact $I=\left[E_{-},E_{+}\right]$,
avec $E_{-}$ et $E_{+}$ deux réels.

\textbf{(H5) }L'ensemble $p^{-1}(I)$ est un anneau topologique de
$T^{*}\mathbb{R}$ feuilleté par des trajectoires périodiques du flot
hamiltonien. 

\textbf{(H6)} La fonction $p$ n'a pas de points critiques sur $p^{-1}(I)$. 

Posons aussi $F_{+/-}:=A\left(E_{+/-}\right)$ où $A(x):={\displaystyle \int_{p(y)\leq x}}dy$.
\begin{thm}
(\textbf{{[}HellRober{]}}) Sous les hypothèses précédentes (H1) à
(H6); il existe une fonction $f_{h}$ définie sur $J:=\left[F_{-},F_{+}\right]$
admettant un développement asymptotique en puissance de $h$ avec
des coefficients de classe $\mathcal{C}^{\infty}$ sur $J$ :\[
f_{h}(x)={\displaystyle \sum_{j=0}^{\infty}f_{j}(x)h^{j};}\]
de sorte que les valeurs propres de $P_{h}$ dans le compact $I$
sont données par \[
\lambda_{n}(h)=f_{h}\left(h\left(n+\frac{1}{2}\right)\right)+O(h^{\infty})\]
avec $n\in\mathbb{N}$ tel que $h\left(n+\frac{1}{2}\right)\in J.$
En outre $f_{0}(x)=2\pi A^{-1}(x)$ et $f_{1}\equiv0$.\end{thm}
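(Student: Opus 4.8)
The plan is to obtain the explicit formula by \emph{inverting} the Bohr--Sommerfeld quantization condition supplied by the preceding theorem: in symplectic dimension two that condition involves a single monodromy function $S(E)=S(E,h)$, and the task is to invert it into a function $f_h$ of the quantum number with the stated leading terms. First I would use the geometric hypotheses. By (H6), $p$ has no critical point on $p^{-1}(I)$, so $A\colon E\mapsto A(E)$ is $\mathcal{C}^\infty$ and strictly monotone on $I$, hence a diffeomorphism onto $J=[F_-,F_+]$ with smooth inverse $A^{-1}\colon J\to I$; by (H5), each level set $\Lambda_E=p^{-1}(E)$, $E\in I$, is one smooth compact connected regular orbit, i.e.\ a circle, with Maslov index $2$.

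Fix $E\in I$. Invoking the semiclassical action--angle theorem near $\Lambda_E$, there is an exact symplectomorphism onto a neighbourhood of $\Lambda_E$ and an associated unitary Fourier integral operator conjugating $P_h-EI_d$, microlocally, to a normal form of type $\tfrac{h}{i}\partial_x-\lambda_h$. Covering $\Lambda_E$ by charts $U_1,\dots,U_l$ as in the construction preceding the Bohr--Sommerfeld theorem and continuing the WKB-type microlocal generator around the loop, the transition cocycle is $c_{k,k+1}=e^{iS_{k,k+1}/h}$, and a globally defined microlocal solution with microsupport $\Lambda_E$ exists precisely when the total holonomy $S(E)=\sum_k S_{k,k+1}$, a class in $\mathbb{R}/2\pi h\mathbb{Z}$, is trivial, i.e.\ $S(E)\in 2\pi h\mathbb{Z}$. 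Here $S$ has an asymptotic expansion $S(E)\sim S_0(E)+hS_1(E)+\cdots$ with coefficients $\mathcal{C}^\infty$ in $E$ on $I$ (the smooth dependence on $E$ being part of that construction), and $S_0(E)=A(E)$ is the classical action, the Maslov index $2$ accounting for the half-integer shift. Hence a joint $O(h^\infty)$-quasimode at energy $\lambda$ exists iff $S(\lambda,h)=2\pi h(n+\tfrac12)$ for some $n\in\mathbb{N}$.

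It remains to invert $S$. Since $S_0'=A'\neq0$ on $I$, the implicit function theorem applied termwise to the asymptotic series produces $f_h(x)\sim\sum_{j\ge0}f_j(x)h^j$ with $\mathcal{C}^\infty$ coefficients on $J$ such that $S(f_h(x),h)=2\pi x$ to all orders; unwinding normalizations gives $f_0=2\pi A^{-1}$, and $f_1\equiv0$ because, beyond the Maslov constant already absorbed into $n+\tfrac12$, the order-$h$ term of the monodromy vanishes for the Weyl quantization (a parity property of the Moyal product, equivalently vanishing of the Weyl sub-principal symbol). Substituting $x=h(n+\tfrac12)\in J$ yields, for each admissible $n$, an $O(h^\infty)$-quasimode of $P_h$ at $f_h(h(n+\tfrac12))$; since $P_h$ is essentially self-adjoint (cf.\ the Remarque above) this forces an exact eigenvalue within $O(h^\infty)$. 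Conversely, any exact eigenfunction with eigenvalue in $I$ is microlocalised on $\bigcup_E\Lambda_E$ and so satisfies the quantization condition, so $n\mapsto f_h(h(n+\tfrac12))$ accounts for all of $\sigma(P_h)\cap I$ with the right multiplicity and no omission; the exact count follows from the relation between exact and semiclassical spectra recalled earlier (or from a deformation to the harmonic-oscillator model).

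The hard part will be the monodromy computation of the second step: carrying the WKB solution through the turning points, pinning down each phase $S_{k,k+1}$ with its $\pi/2$ Maslov contribution, and --- above all --- proving that \emph{every} coefficient $S_j(E)$ is $\mathcal{C}^\infty$ in $E$, uniformly on the compact $I$ (this is precisely the smooth $E$-dependence of the action--angle normal form). The identity $f_1\equiv0$ is the other delicate point, being special to the Weyl quantization rather than to the operator itself.
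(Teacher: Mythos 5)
You should first note that the paper itself does not prove this statement: it is quoted from Helffer--Robert \textbf{[77]}, and the only hint given is the Remarque pointing to a proof via coherent states in \textbf{[22]} and \textbf{[109]}. Your proposal therefore follows a genuinely different route, the microlocal one: construct the holonomy $S(E)=\sum_k S_{k,k+1}$ on a regular fibre by the cocycle argument of the preceding subsection, characterize the existence of a global microlocal solution by $S(E)\in 2\pi h\mathbb{Z}$ (with the Maslov index $2$ of the circle producing the shift $n+\frac{1}{2}$), and invert this relation term by term, using $S_0'=A'\neq 0$ on the compact $I$, to obtain $f_h$. This is precisely the road the paper gestures at immediately after the statement (Colin de Verdi\`ere \textbf{[45]} computes all the coefficients of $E\mapsto S(E)$, hence the $f_j$), and it meshes with the material already in place: the rank-one description of the sheaf of microlocal solutions on a regular fibre gives simplicity, and the Proposition comparing $\Sigma_h$ and $\sigma(P_h)$ on a compact converts the quasimode condition into the exact spectral statement modulo $O(h^{\infty})$. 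What the coherent-state/functional-calculus proof of \textbf{[77]}, \textbf{[22]}, \textbf{[109]} buys instead is uniformity in $n$ over all of $I$ and a direct computation of $f_0,f_1$ without transporting WKB solutions through the turning points.

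Two points in your sketch need more than the one line you give them. First, $f_1\equiv 0$ is not a consequence of Weyl quantization by itself: under (H1)--(H4) the full symbol may contain a term $hp_1$, whose orbit average would then appear in $f_1$; the vanishing holds in the convention of \textbf{[77]} where the subprincipal symbol (the $h$-coefficient of the Weyl symbol) vanishes, so you must either assume this or track that average explicitly. Second, the completeness and multiplicity count --- that the map $n\mapsto f_h\left(h\left(n+\frac{1}{2}\right)\right)$ exhausts $\sigma(P_h)\cap I$ with simple eigenvalues --- is exactly where the work lies: you need the connectedness of the fibres from (H5), the rank-one property of the microlocal solution module on the whole fibre, and the exact-versus-semiclassical Proposition, applied uniformly for $h(n+\frac{1}{2})$ ranging over $J$; invoking them is not yet carrying out the bijection. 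Finally, watch the normalization: with $A(x)=\int_{p(y)\leq x}dy$ the leading inversion gives $\lambda_n\approx A^{-1}\left(2\pi h\left(n+\frac{1}{2}\right)\right)$, which is what the paper's formula $f_0(x)=2\pi A^{-1}(x)$ (and the definition of $J$ through $F_{\pm}=A(E_{\pm})$) is intended to encode, so state your convention for the action explicitly before asserting the identification.
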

\begin{rem}
Dans \textbf{{[}22{]}} (voir aussi \textbf{{[}109{]}}) on trouve une
preuve de ce résultat utilisant les états cohérents.
\end{rem}
Avec des hypothèses légèrement différentes Y. Colin de Verdière \textbf{{[}45{]}}
donne le calcul de tous les coefficients dans le développement asymptotique
de la fonction $E\mapsto S(E)$ et donc la valeur des coefficients
$f_{j}.$

\subsection{... aux fibres singulières}

On ne va pas décrire ici toutes la théorie des fibres singulières;
on rappelle juste que dans le cas elliptique : lorsqu'on a une application
moment $\mathbf{p}=(p_{1},...,p_{n})$ avec un point fixe $m$ elliptique,
alors il existe $\chi$ un symplectomorphisme local de $\mathbb{R}^{2n}$
vers $M$ envoyant $0$ sur le point $m$ tel que :\[
\mathbf{p}\circ\chi=\varphi\circ\left((x_{1}^{2}+\zeta_{1}^{2})/2,\ldots,(x_{n}^{2}+\zeta_{n}^{2})/2\right).\]
En version semi-classique, si on est dans le cas de la dimension 2,
$\mathbf{P}=P_{1}$ avec $\mathbf{p}=p_{1}$ ayant une singularité
elliptique en $m$, alors il existe $U$ un opérateur intégral de
Fourier et $\lambda(h)\in$$\mathbb{C}[[h]]$ tel que microlocalement
prés de l'origine on ait :\[
U\left(\mathbf{P}-\mathbf{p}(m)I_{d}\right)U^{-1}=N\left(Q-h\lambda(h)I_{d}\right)\]
où $Q$ est l'oscillateur harmonique de dimension un et $N$ un opétareur
pseudo-différentiel elliptique en $0$. De là on en déduit une condition
de quantification sur la série formelle $\lambda(h)$ et aussi que
l'ensemble des micro-solutions de l'équation :\[
\mathbf{P}u_{h}=O(h^{\infty})\]
est un $\mathbb{C}_{h}$ module libre de rang 1. 

Pour le cas hyperbolique, on va juste parler du ''huit'' en dimension
2. Pour cela, considérons donc un hamiltonien $p\,:\,\mathbb{R}^{2}\rightarrow\mathbb{R}$
tel que $0$ soit valeur critique de $\mathbf{p}=p$ et telle que
la fibre $\mathbf{p}^{-1}(0)$ est compacte et ne contient qu'un unique
point critique $m\in M$ non-dégénéré de type hyperbolique.

\begin{center}
\includegraphics[scale=0.78]{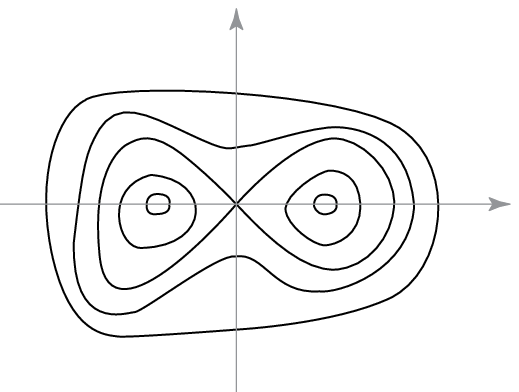}
\par\end{center}

\begin{center}
\textit{Fig. 3: Un {}``huit'' hyperbolique.}
\par\end{center}

\vspace{0.25cm}

La fibre $\Lambda_{0}=\mathbf{p}^{-1}(0)$ est alors un ''huit''
: le feuilletage dans un voisinage de la fibre singulière $\Lambda_{0}$
est difféomorphe à celui du double puits en dimension un (voir {[}\textbf{91{]}}).
Dans le cas de la dimension 2, $\mathbf{P}=P$ avec $\mathbf{p}=p$
ayant une singularité hyperbolique, Y. Colin de Verdière et B. Parisse
on montré que l'ensemble des micro-solutions de l'équation :

\[
\mathbf{P}u_{h}=O(h^{\infty})\]
est un $\mathbb{C}_{h}$ module libre de rang 2. (voir aussi\textbf{
{[}39{]}}, \textbf{{[}40{]}}, \textbf{{[}44{]}} et \textbf{{[}91{]}}).
Dans l'étude des singularités de l'application moment d'un système
complètement intégrable, l'opérateur de Schrödinger avec double puits
est le modèle type pour les singularités non-dégénérées de type hyperbolique.
En effet, pour un hamiltonien $p\,:\, M\rightarrow\mathbb{R}$ tel
que $0$ soit valeur critique de $p$, et tel que les fibres dans
un voisinage de $0$ soient compactes et connexes et ne contiennent
qu'un unique point critique non-dégénéré de type hyperbolique: la
fibre $\Lambda_{0}:=p^{-1}(0)$ est alors un ''huit'' et le feuilletage
dans un voisinage de la fibre singulière $\Lambda_{0}$ est difféomorphe
à celui du double puits. Dans \textbf{{[}91{]} }on montre que le spectre
de l'opérateur $\mathbf{P}$ dans le compact $\left[-\sqrt{h},\sqrt{h}\right]$
est constitué de deux familles de réels $\left(\alpha_{k}(h)\right)_{k}$
et $\left(\beta_{l}(h)\right)_{l}$ vérifiant : \[
\cdots<\beta_{k+1}(h)<\alpha_{k}(h)<\beta_{k}(h)<\alpha_{k-1}(h)<\cdots\]
et, ils existent $C,C^{\prime}$ deux constantes réelles strictement
positives telles que \[
\frac{Ch}{\left|\ln(h)\right|}\leq\left|\alpha_{k+1}(h)-\alpha_{k}(h)\right|\leq\frac{C^{\prime}h}{\left|\ln(h)\right|},\,\frac{Ch}{\left|\ln(h)\right|}\leq\left|\beta_{k+1}(h)-\beta_{k}(h)\right|\leq\frac{C^{\prime}h}{\left|\ln(h)\right|}.\]
Pour le cas loxodromique (qui existe à partir de la dimension symplectique
4) on renvoi au travaux de S. Vu Ngoc \textbf{{[}118{]}}, \textbf{{[}119{]}}.
\selectlanguage{english}%

\hspace{-0.5cm}

\vspace{1cm}

\hspace{-0.5cm}\textbf{\large Olivier Lablée}{\large \par}

\hspace{-0.5cm}Université Grenoble 1- CNRS

\hspace{-0.5cm}Institut Fourier

\hspace{-0.5cm}UFR de Mathématiques

\hspace{-0.5cm}UMR 5582

\hspace{-0.5cm}BP 74 38402 Saint Martin d'Hères 

\hspace{-0.5cm}mail: \textcolor{blue}{lablee@ujf-grenoble.fr}

\hspace{-0.5cm}page web : \textcolor{blue}{http://www-fourier.ujf-grenoble.fr/\textasciitilde{}lablee/}
\end{document}